\documentclass[reqno]{amsart}%[a4paper]
\usepackage{latexsym,amsfonts,amssymb,epsfig}
\usepackage{hyperref} %% autoreferences, works only with PdfLaTeX
\usepackage{color}

\textwidth16.5truecm
\hoffset-1.8cm
%\textheight25.6truecm
%\voffset-2.7cm
\voffset-1.6cm    % fapesp
\textheight23.5truecm %fapesp

\DeclareMathOperator{\ch}{char}

\DeclareMathOperator{\ad}{ad}
\DeclareMathOperator{\Der}{Der}

\DeclareMathOperator{\Lie}{Lie}
\DeclareMathOperator{\Alg}{Alg}

\DeclareMathOperator{\End}{End}

\DeclareMathOperator{\cwt}{\mathsf{wt}}    % weight
\DeclareMathOperator{\wt}{wt}    % weight
  % superweight
  % superweight
  % superweight
    % vector weight function
    % vector weight function
\DeclareMathOperator{\Gr}{Gr}     % grading by multidegree
     % orthogonal coordinates
     % orthogonal coordinates
     % orthogonal coordinates
     % orthogonal coordinates

    % vector weight function
     % grading by multidegree
     % grading by multidegree

%%\DeclareMathOperator{\ort}{ort}     % orthogonal coordinates
     % orthogonal coordinates
 % group
 % group
      % matrix ring
\renewcommand {\limsup}{\operatorname* {\overline{lim}}}
\renewcommand {\liminf}{\operatorname* {\underline{lim}}}

\DeclareMathOperator{\GKdim}{GKdim}
\DeclareMathOperator{\LGKdim}{\underline{GKdim}}

\DeclareMathOperator{\EXP}{EXP}

\DeclareMathOperator{\Ldim}{Ldim}
\DeclareMathOperator{\LLdim}{\underline{Ldim}}

\newcommand\dd{\partial}

\renewcommand{\a}{\alpha}
\renewcommand{\b}{\beta}
\newcommand{\g}{\gamma}

\newcommand{\Z}{\mathbb Z}            % integers
\newcommand{\R}{\mathbb R}            % reals
            % rational numbers
\newcommand{\N}{\mathbb N}            % natural numbers
\newcommand{\F}{\mathbb F}            % finite field
\newcommand{\C}{\mathbb C}            % complex numbers
\newcommand\NO{\mathbb N_0}           % natural + 0
         % 0,1,...,p-1
         % Hilbert-Poincare series
         % weight sequencees

%%%%%%\def{{\WW}}{\mathbf W}       % Witt algebra
\newcommand{\LL}{\mathbf L}        % Fibonacci restricted Lie superalgebra algebra
\newcommand{\QQ}{\mathbf Q}        % Lie superalgebra, first example
        % Lie superalgebra, second example
        % Poisson superalgebra
        % Jordan superalgebra
        % second Jordan superalgebra
\newcommand{\TT}{\mathbf T}        % second Jordan superalgebra

\renewcommand{\AA}{\mathbf A}      % associative hull of Fibonacci Lie algebra
        % modified Fibonacci restricted Lie algebra
        % associative hull for modified Fibonacci Lie algebra
%\newcommand{\CC}{\mathbf C}        % associative self-similar matrices
%\newcommand{\DD}{\mathbf D}        % associative self-similar matrices-2
        % some associative algebras
       % more associative algebras
\newcommand{\uu}{\mathbf u}         % restricted enveloping algebra
         % special derivations
      % Fibonacci nonrestricted(!!!) Lie algebra
      % modified Fibonacci nonrestricted(!!!) Lie algebra
      % Kantor double
      % second Kantor double
      % second Kantor double
      % matrix ring
      % matrix ring

%----------------------------------------------------------------------------------------
\newtheorem{Theorem}{Theorem}[section]
\newtheorem{Corollary}[Theorem]{Corollary}
\newtheorem{Lemma}[Theorem]{Lemma}

\theoremstyle{remark}
\newtheorem{Remark}{Remark}
\theoremstyle{Example}
\newtheorem{Example}{Example}
\theoremstyle{Definition}
\newtheorem{Definition}{Definition}
\theoremstyle{Conjecture}

   % labels i for enumerate
    % marks i) for enumerate
%-----------------------------------------------------------------------
\begin{document}
\title{Clover nil restricted Lie algebras of quasi-linear growth}
\author{Victor Petrogradsky}
\address{Department of Mathematics, University of Brasilia, 70910-900 Brasilia DF, Brazil}
\email{petrogradsky@rambler.ru}
\thanks{The author was partially supported by “bolsa de produtividade em pesquisa” CNPq~309542/2016-2, Brazil, and  FAPDF 2019/01
\\ \vspace{-0.3cm}
%\center{ \bf\today}
}
%\author{I.P.~Shestakov}
%\address{Instituto de Mathem\'atica e Estat\'istica,
%Universidade de Sa\~o Paulo, Caixa postal 66281, 05315-970, Sa\~o Paulo, Brazil}
%\email{shestak@ime.usp.br}
%\thanks{The second author was partially supported by grants FAPESP 2014/09310-5, CNPq 2014/09310-5
%}
\subjclass[2000]{
16P90, % growth
16N40, % Nil and nilpotent radicals, sets, ideals, rings
16S32, % Rings of differential operators
17B50, % Modular Lie (super)algebras
17B65, % Infinite-dimensional Lie (super)algebras
17B66, % Lie algebras of vector fields and related (super) algebras
17B70} % Graded Lie (super)algebras
%17A70} % Superalgebras
%17B63, %	Poisson algebras
%17C10, %	Structure theory of Jordan algebras
%17C50, %	Jordan structures associated with other structures
%17C70  % superstructures

\keywords{restricted Lie algebras, $p$-groups, growth, self-similar algebras, nil-algebras, graded algebras,
Lie superalgebra, Lie algebras of differential operators, Kurosh problem}

\begin{abstract}
The Grigorchuk and Gupta-Sidki groups play fundamental role in modern group theory.
They are natural examples of self-similar finitely generated periodic groups.
The author constructed their analogue in case of restricted Lie algebras of characteristic 2~\cite{Pe06},
Shestakov and Zelmanov extended this construction to an arbitrary positive characteristic~\cite{ShZe08}.
%Also, the author constructed a family of 2-generated restricted Lie algebras of slow polynomial growth with a nil $p$-mapping~\cite{Pe17}.

Now, we construct a family of so called {\it clover } 3-generated restricted Lie algebras $\mathbf{T}(\Xi)$,
where a field of positive characteristic is arbitrary and $\Xi$ an infinite tuple of positive integers.
All these algebras have a nil $p$-mapping.
We prove that $1\le \mathrm{GKdim}\mathbf{T}(\Xi)\le 3$.
We compute Gelfand-Kirillov dimensions of clover restricted Lie algebras with periodic tuples and show
that these dimensions for constant tuples are dense on $[1,3]$.
We construct a subfamily of %non-isomorphic 
nil restricted Lie algebras $\TT(\Xi_{q,\kappa})$, with parameters $q\in \N$, $\kappa\in\R^+$,
having extremely slow {\it quasi-linear} growth of type:
$\gamma_{\mathbf{T}(\Xi_{q,\kappa})}(m)=m \big(\underbrace{\ln\cdots\ln}_q  m\big )^{\kappa+o(1)}$, as $m\to\infty$.

The present research is motivated by a construction by Kassabov and Pak of groups of oscillating growth~\cite{KasPak13}.
As an analogue, we construct nil restricted Lie algebras of intermediate oscillating growth in~\cite{Pe20flies}.
We call them {\it Phoenix algebras} because,  for infinitely many periods of time, the algebra is "almost dying" by having
a "quasi-linear" growth as above, for infinitely many $n$ it has a rather fast intermediate growth of type
$\exp( n/ (\ln n)^{\lambda})$, for such periods the algebra is "resuscitating".
The present construction of 3-generated nil restricted Lie algebras of quasi-linear growth is an important part of that result,
responsible for the lower quasi-linear bound in that construction.
\end{abstract}
\maketitle

%**********************************************************************************
\section{Introduction}

%We start with a short review explaining our motivation.
Different versions of Burnside Problem ask what one can say about finitely generated periodic groups under additional assumptions.
Kurosh type problems ask similar questions
about properties of finitely generated nil (more generally, algebraic) associative algebras.
In case of finitely generated Lie algebras, the periodicity is replaced by the condition that the adjoint mapping is nil.
In particular, for Lie $p$-algebras one assumes that the $p$-mapping is nil.
One of recent important directions in these areas is to study the growth of finitely generated (periodic)
groups and (nil) algebras~\cite{ErshlerZheng20,BellZel19}.
The goal of this paper is to construct finitely generated nil restricted Lie algebras
with extremely slow quasi-linear growth, these algebras are needed in further research~\cite{Pe20flies}.
Main results are formulated in Section~\ref{Smain}, see Theorem~\ref{Tparam} and Theorem~\ref{Tparam2}.

\subsection{Kurosh problem, Golod-Shafarevich algebras and groups}
The General Burnside Problem asks whether a finitely generated periodic group is finite.
The first negative answer was given by Golod and Shafarevich:
they proved that there exist finitely generated infinite $p$-groups for each prime $p$~\cite{Golod64}.
As an important instrument, they first construct finitely generated
infinite dimensional associative nil-algebras~\cite{Golod64}.
Using this construction, there are also examples of infinite dimensional 3-generated Lie algebras $L$
such that $(\ad x)^{n(x,y)}(y)=0$, for all $x,y\in L$, the field being arbitrary~\cite{Golod69}.
Similarly, one easily obtains infinite dimensional finitely generated restricted Lie algebras $L$ with  a nil $p$-mapping.
This gives a negative answer to the question of Jacobson whether
a finitely generated restricted Lie algebra $L$ is finite dimensional provided that
each element $x\in L$ is algebraic, i.e. satisfies some $p$-polynomial $f_{p,x}(x)=0$
(\cite[Ch.~5, ex.~17]{JacLie}). % p.196.

It is known that the construction of Golod yields associative nil-algebras of exponential growth.
Using specially chosen relations, Lenagan and Smoktunowicz constructed associative nil-algebras of polynomial growth~\cite{LenSmo07};
there are more constructions including associative nil-algebras of intermediate growth~\cite{BellYoung11,LenSmoYoung12,Smo14}.
On further developments concerning Golod-Shafarevich algebras and groups see~\cite{Voden09,Ershov12}.

A close by spirit but different construction was motivated by respective group-theoretic results.
A restricted Lie algebra $G$ is called {\it large} if there is a subalgebra  $H\subset G$ of finite codimension
such that $H$ admits a surjective homomorphism on a nonabelian free restricted Lie algebra.
Let $K$ be a perfect at most countable field of positive characteristic.
Then there exist infinite-dimensional finitely generated nil restricted Lie algebras over $K$ that
are residually finite dimensional and direct limits of large restricted Lie algebras~\cite{BaOl07}.

\subsection{Grigorchuk and Gupta-Sidki groups}
The construction of Golod is rather undirect, Grigorchuk gave a direct and elegant construction of
an infinite 2-group generated by three elements of order 2~\cite{Grigorchuk80}.
Originally, this group was defined as a group of transformations of the interval $[0,1]$ from which
rational points of the form $\{k/2^n\mid  0\le k\le 2^n,\ n\ge 0\}$ are removed.
For each prime $p\ge 3$, Gupta and Sidki gave a direct construction of an infinite $p$-group
on two generators, each of order $p$~\cite{GuptaSidki83}.
This group was constructed as a subgroup of an automorphism group of an infinite regular tree of degree $p$.

The Grigorchuk and Gupta-Sidki groups are counterexamples to the General Burnside Problem.
Moreover, they gave answers to important problems in group theory.
So, the Grigorchuk group and its further generalizations
are first examples of groups of intermediate growth~\cite{Grigorchuk84}, thus answering
in negative to a conjecture of Milnor that groups of intermediate growth do not exist.
The construction of Gupta-Sidki also yields groups of subexponential growth~\cite{FabGup85}.
The Grigorchuk and Gupta-Sidki groups are {\it self-similar}.
Now self-similar, and so called {\it branch groups}, form a well-established area in group theory~\cite{Grigorchuk00horizons,Nekr05}.

\subsection{Fibonacci Lie algebra, nil (restricted) Lie (super)algebras}
There are also constructions of self-similar associative  algebras~\cite{Bartholdi06,Sidki09,PeSh13ass}.
Despite some efforts~\cite{Sidki09,PeSh13ass}, in case of associative algebras, an appropriate analogue of
the Grigorchuk and Gupta-Sidki groups is not known yet.
But in case of restricted Lie algebras, we have natural analogues.
\begin{Example} ({\it Fibonacci restricted Lie algebra}~\cite{Pe06}).
Let $\ch K=p=2$ and $R=K[t_i| i\ge 0 ]/(t_i^p| i\ge 0)$, a truncated polynomial ring.
Put $\dd_i=\frac {\dd}{\partial t_i}$, $i\ge 0$.
Define two derivations of $R$:
\begin{align*}
v_1 & =\dd_1+t_0(\dd_2+t_1(\dd_3+t_2(\dd_4+t_3(\dd_5+t_4(\dd_6+\cdots )))));\\
v_2 & =\qquad\quad\;\,
\dd_2+t_1(\dd_3+t_2(\dd_4+t_3(\dd_5+t_4(\dd_6+\cdots )))).
\end{align*}
Consider the restricted Lie algebra generated by them
$\LL=\Lie_p(v_1,v_2)\subset\Der R$ and an associative algebra $\AA=\Alg(v_1,v_2)\subset \End R$.
\end{Example}
The Fibonacci restricted Lie algebra has a slow polynomial growth
with Gelfand-Kirillov dimension $\GKdim \LL=\log_{(\sqrt 5+1)/2} 2\approx 1.44$~\cite{Pe06}.
Further properties of the Fibonacci restricted Lie algebra %%and its generalizations
are studied in~\cite{PeSh09,PeSh13fib}.
On background and some results on Lie algebras of differential
operators in infinitely many variables see~\cite{Razmyslov,Rad86,PeRaSh,FutKochSis}.

Probably, the most interesting property of $\LL$ is that it has a nil $p$-mapping~\cite{Pe06},
which is an analog of the periodicity of the Grigorchuk and Gupta-Sidki groups.
We still do not know whether the associative hull $\AA$ is a nil-algebra.
We have a weaker statement. The algebras $\LL$, $\AA$, and the augmentation ideal
of the restricted enveloping algebra $\uu=\omega u(\LL)$ are direct sums of two locally nilpotent subalgebras~\cite{PeSh09}.

The next step was made by Shestakov and Zelmanov,
in case of an arbitrary prime characteristic,
they constructed an example of a 2-generated restricted Lie algebra with a nil $p$-mapping~\cite{ShZe08}.
An example of a $p$-generated  nil restricted Lie algebra $L$, characteristic $p$ being arbitrary, was studied in~\cite{PeShZe10}.
These infinite dimensional restricted Lie algebras and their restricted enveloping algebras as well, have
different decompositions into a direct sum of two locally nilpotent subalgebras~\cite{PeShZe10}.

Observe that only the original example has a clear monomial basis~\cite{Pe06,PeSh09}.
In other examples, elements of a Lie algebra are  linear combinations of monomials;
to work with such linear combinations is sometimes an essential technical difficulty, see e.g.~\cite{ShZe08,PeShZe10,Pe20flies}.

A systematic approach to construct (restricted) Lie (super)algebras
having {\it good monomial bases} was developed due to the second Lie superalgebra introduced in~\cite{Pe16}.
\begin{Example}\label{Example_Q}(second example $\QQ$ in \cite{Pe16})
Consider the Grassmann superalgebra $\Lambda=\Lambda[x_i,y_i,z_i| i\ge 0]$, field being arbitrary.
Using its partial superderivations,
define recursively odd elements in the associative superalgebra $\End(\Lambda)$:
\begin{equation*}%\label{aibici3}
\begin{split}
a_i &= \partial_{x_i} + y_ix_i a_{i+1},\\
b_i &= \partial_{y_i} + z_iy_i b_{i+1},\\
c_i &= \partial_{z_i} + x_iz_i c_{i+1},
\end{split}
\qquad i\ge 0.
\end{equation*}
Define the Lie superalgebra $\QQ:=\Lie(a_0,b_0,c_0)\subset \Der\Lambda$.
%In case $\ch K=2$, assume that the quadratic mapping on odd elements is just the square of the respective operator in $\End\Lambda$.
\end{Example}
Using a similar approach, a family of nil restricted Lie algebras of slow polynomial growth having  good monomial bases
was constructed in~\cite{Pe17} (Example~\ref{E2}, actually, these algebras are more close to the first example in~\cite{Pe16}).

Informally speaking, there are no "natural analogues" of the Grigorchuk group
in the world of Lie algebras of characteristic zero~\cite{MaZe99}.
On the other hand, we show that Example~\ref{Example_Q} serves as an appropriate analogue of the Grigorchuk group in the class of Lie {\it super}algebras
in case of an arbitrary field, because it is nil finely $\Z^3$-graded, see details in~\cite{Pe16}.
Next, we construct a more "handy" 2-generated fractal Lie superalgebra $\mathbf{R}$
over an arbitrary field~\cite{PeOtto}.
This example is close to the smallest possible one, because $\mathbf{R}$ has a linear growth
with a growth function $\gamma_\mathbf{R}(m)\approx 3m$, as $m\to\infty$.
Moreover, its degree $\mathbb{N}$-grading is of finite width 4 ($\ch K\ne 2$).
We also construct a  just infinite fractal 3-generated Lie superalgebra ${\mathbf Q}$ over an arbitrary field,
which gives rise to an associative hull, a Poisson superalgebra, and two Jordan superalgebras supplying
analogues of the Grigorchuk and Gupta-Sidki groups in respective classes of algebras~\cite{PeSh18FracPJ}.

\subsection{Narrow groups and Lie algebras}
The Grigorchuk group $G$ is of finite width, namely,
the lower central series factors are bounded~\cite{Rozh96,BaGr00,Grigorchuk00horizons}.
In particular, the respective Lie algebra $L=L_K(G)=\oplus_{i\ge 1} L_i$ has a linear growth.
Bartholdi presented $L_{K}(G)$ as a self-similar restricted Lie algebra
and proved that the restricted Lie algebra $L_{\F_2}(G)$ is nil while $L_{\F_4}(G)$ is not nil~\cite{Bartholdi15}.
Also, $L_K(G)$ is {\it nil graded}, namely,
for any homogeneous element $x\in L_i$, $i\ge 1$, the mapping $\ad x$ is nilpotent, because the group $G$ is periodic.
Naturally $\N$-graded Lie algebras over $\R$ and $\C$
satisfying the condition $\dim L_n+\dim L_{n+1}\le 3$, $n\ge 1$, are classified recently by Millionschikov~\cite{Mil20}.
Slowly growing so called filiform Lie algebras in characteristic zero are studied in~\cite{CarMatNew97,CarNew00}.
Concerning narrow Lie algebras and groups see survey~\cite{ShaZel99}.

%*******************************************************************************************
\section{Basic notions: restricted Lie algebras, Growth}\label{Sdef}

%In this section we give basic definitions on restricted Lie algebras and growth functions.
As a rule, $K$ is an arbitrary field of positive characteristic $p$,
$\langle S\rangle_K$ denotes a linear span of a subset $S$ in a $K$-vector space.
Let $L$ be a Lie algebra, then $U(L)$ denotes the universal enveloping algebra.
Long commutators are {\it right-normed}: $[x,y,z]:=[x,[y,z]]$.
We use a standard notation $\ad x(y)=[x,y]$, where $x,y\in L$.
Also, we use the notation $[x^k,y]:=(\ad x)^k (y)$, where $k\ge 1$, $x,y\in L$; in case $k=p^l$, we have also $[x^{p^l},y]=[x^{[p^l]},y]$,
in terms of the $p$-mapping (see below).

\subsection{Restricted Lie algebras}
Let $L$ be a Lie algebra over a field $K$ of characteristic $p>0$.
Then $L$ is called a
\textit{restricted Lie algebra} (or \textit{Lie $p$-algebra}),
if it is additionally supplied with a unary operation
 $x\mapsto x^{[p]}$, $x\in L$, that satisfies the following
 axioms~\cite{JacLie,Ba,Strade1,StrFar,BMPZ}:
\begin{itemize}
\item $(\lambda x)^{[p]}=\lambda^px^{[p]}$, for $\lambda\in K$, $x\in L$;
\item $\ad(x^{[p]})=(\ad x)^p$, $x\in L$;
\item $(x+y)^{[p]}=x^{[p]}+y^{[p]}+\sum_{i=1}^{p-1}s_i(x,y)$, for all $x,y\in L$,
where $i s_i(x,y)$~is the coefficient of $t^{i-1}$ in the polynomial
$\operatorname{ad}(tx+y)^{p-1}(x)\in L[t]$.
\end{itemize}
This notion is motivated by the following construction.
Let $A$ be an associative algebra over a field ~$K$.
If the vector space $A$ is supplied with a new product $[x,y]=xy-yx$, $x,y\in A$,
one obtains a Lie algebra denoted by $A^{(-)}$.
In case $\operatorname{char}K=p>0$,
the mapping  $x\mapsto x^p$, $x\in A^{(-)}$, satisfies the three axioms above.

Suppose that $L$~ is a restricted Lie algebra.
Let $J$~be the ideal of the universal enveloping algebra~$U(L)$ generated by $\{x^{[p]}-x^p\mid x\in L\}$.
Then $u(L)=U(L)/J$ is called a \textit{restricted enveloping algebra}.
In this algebra, the formal operation $x^{[p]}$ coincides with the $p$th power~$x^p$ for any $x\in L$.
One has an analogue of Poincare-Birkhoff-Witt's theorem
yielding a basis of the restricted enveloping algebra~\cite[p.~213]{JacLie}.
We shall use the following version of the formula above:
\begin{equation}\label{power_P}
(x+y)^{[p]}=x^{[p]}+y^{[p]}+(\ad x)^{p-1}(y)+
\sum_{i=1}^{p-2}s_i(x,y),\qquad x,y\in L,
\end{equation}
where $s_i(x,y)$ consists of commutators containing $i$ letters $x$ and $p-i$ letters $y$.

\subsection{Growth}
Let $A$  be an associative (or Lie) algebra  generated by a finite set $X$.
Denote  by $A^{(X,n)}$ the subspace of $A$ spanned by all  monomials  in $X$ of length not  exceeding  $n$, $n\ge 0$.
If $A$ is a restricted Lie algebra, we define
$A^{(X,n)}=\langle\, [x_{i_1},\dots,x_{i_s}]^{p^k}\mid x_{i_j}\in X,\, sp^k\le n\rangle_K$~\cite{Pape01}.
One obtains the {\em growth function}:
$$
\gamma_A(n)=\gamma_A(X,n):=\dim_KA^{(X,n)},\quad n\ge 0.
$$
Clearly, the growth function depends on the choice of the generating set $X$.
Let $f,g:\N\to\R^+$ be increasing functions.
Write $f(n)\preccurlyeq g(n)$ if and only if there exist positive integers $N,C$ such that $f(n)\le g(Cn)$ for all $n\ge N$.
Introduce equivalence $f(n)\sim g(n)$ if and only if  $f(n)\preccurlyeq g(n)$ and $g(n)\preccurlyeq f(n)$.
Different generating sets of an algebra yield equivalent growth functions~\cite{KraLen}.

The growth of the free associative, (restricted) Lie (super)algebras is exponential~\cite{Ba,BMPZ,KraLen,Pe03}.
Moreover, any finitely generated linear algebra (i.e. existence of a bilinear product is assumed only)
has at most exponential growth, because it is a
homomorphic image of the {\it absolutely free algebra} with finite number of generators,
which exponential growth is well-known, see e.g.~\cite{Pe05}.
To describe the growth of a finitely generated algebra $A$ one defines its {\it exponent}
(which depends on the generating set!):
$$\EXP(A,X):=\limsup_{n\to \infty} \sqrt[n]{\gamma_A(X,n)},$$
where in case of an associative algebra $A$ there exists the two-sided limit~\cite{KraLen}.
If $\EXP(A)>1$ then $A$ is said of {\it exponential growth}.
Otherwise, $\EXP A=1$, and $A$ is said of {\it subexponential growth}.
If there exists a constant $\a>0$ such that $\gamma_A(n)\preccurlyeq n^\a$, then $A$ has a {\it polynomial growth}.
A subexponential growth that is not polynomial is called {\it intermediate}.
These types of growth do not depend on the generating set.

A growth function $\gamma_A(n)$ is compared with polynomial functions $n^\a$, $\a\in\R^+$,
by computing the {\em upper and lower Gelfand-Kirillov dimensions}~\cite{KraLen}:
\begin{align*}
\GKdim A&:=\limsup_{n\to\infty} \frac{\ln\gamma_A(n)}{\ln n}=\inf\{\a>0\mid \gamma_A(n)\preccurlyeq n^\a\} ;\\
\LGKdim A&:=\liminf_{n\to\infty}\,  \frac{\ln\gamma_A(n)}{\ln n}=\sup\{\a>0\mid \gamma_A(n)\succcurlyeq n^\a\}.
\end{align*}
Solvable finitely generated Lie algebras have subexponential growth, see~\cite{Licht84}.
The author constructed a scale to measure the growth of such algebras~\cite{Pe96,Pe99int}.

\subsection{Quasi-linear growth, its stratification}
Now, assume that $A$ is an associative or Lie algebra generated by a finite set $X$.
Consider the sequence of non strictly increasing subspaces $\{A^{(X,n)}| n\ge 0\}$.
There are two cases.
1) There exists $n_0\in \N$ such that $A^{(X,n_0)}=A^{(X,n_0+1)}$.
Using that all Lie monomials are expressed  via the right-normed ones, we get $A^{(X,m)}=A^{(X,n_0)}$ for all $m\ge n_0$.
Thus, $A$ is finite dimensional and $\GKdim A=0$.
2) All subsequent terms are different, hence their dimensions are strictly increasing.
By induction, we get the lower bound
$\gamma_A(n,X)=\dim A^{(X,n)}\ge n+1$ and $\GKdim A\ge 1$.
Thus, one has a trivial gap $\GKdim A\notin (0,1)$.
Also, if $A$ is infinite dimensional, then
the growth function is bounded from below by the linear function $n+1$.
The analogue of this gap for restricted Lie algebras is studied in~\cite{Pape01}.

In this paper, we construct algebras whose growth is somewhat close to that lowest possible linear growth function.
Let an algebra $A$  satisfies $\GKdim A=\LGKdim A=1$, we say that $A$  has a {\it quasi-linear growth}.
Quasi-linear growths are
not distinguishable from viewpoint of the Gelfand-Kirillov dimension because they merge as "one point".
In order to blow up this point, we compare a quasi-linear growth function with two families of etalon functions.
Denote $\ln^{(q)}(x):=\underbrace{\ln(\cdots\ln}_{q\text{ times}}(x)\cdots)$ and
$\exp^{(q)}(x):=\underbrace{\exp(\cdots\exp}_{q\text{ times}}(x)\cdots)$ for all $q\in\N$.
Consider the first family of quasi-linear functions:
$m\exp \big((\ln m)^{\beta}\big)$, $\beta\in(0,1)$ being a constant.
The second family of quasi-linear functions is
$m (\ln^{(q)} m)^{\beta}$, where $q\in \N$, $\beta\in \R^+$ are constants.
Now, we compare a growth function with these etalon functions, determining their parameters $q,\beta$. Formally, we set:
\begin{align*}
\Ldim^0 A=& \inf\{\beta\in(0,1) \mid \gamma_A(n) \preccurlyeq m \exp \big((\ln m)^{\beta}\big)\}, \quad (q=0);\\
\LLdim^0 A=& \sup\{\beta\in(0,1) \mid \gamma_A(n) \succcurlyeq m \exp \big((\ln m)^{\beta}\big)\}, \quad (q=0);\\
\Ldim^q A=& \inf\{\beta\in\R^+ \mid \gamma_A(n) \preccurlyeq m (\ln^{(q)} m)^{\beta}\},\qquad\qquad\ q\in\N;\\
\LLdim^q A=& \sup\{\beta\in\R^+ \mid \gamma_A(n) \succcurlyeq m (\ln^{(q)} m)^{\beta}\},\qquad\qquad\ q\in\N;
\end{align*}
where the last two numbers are defined for any fixed $q\in \N$,
the latter specifying the number of iterations of the logarithm in the right hand side functions.
One checks that these numbers are invariants not depending on a generating set.
We refer to $q\ge 0$ as the {\it level} of the functions above.
Remark that these notations are different from~\cite{Pe17}.
Define {\it extreme values}, in case $q=0$: $\Ldim^0 A=1$, or $\LLdim^0 A=0$;
in case $q\ge 1$: $\Ldim^q A=+\infty$, or $\LLdim^q A=0$.
In these cases, the etalon functions of level $q$ are not suited to specify the quasi-linear growth of an algebra.
Observe that the functions with bigger $q$ are slower.
One checks that the functions of different levels stratify the merged point of all quasi-linear growths as follows.
\begin{Lemma}
Assume that for some $q\ge 1$ one has $\Ldim^q A=\LLdim^q A=\beta$ where $\beta\ne 0$ and $\beta\ne +\infty$.
Then $\Ldim^{q+1} A=+\infty$ and $\LLdim^{q-1} A=0$.
\end{Lemma}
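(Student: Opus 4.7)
My plan is to reduce the lemma to the standard asymptotic hierarchy ``exponential dominates polynomial dominates logarithm'', applied via the substitution $y=\ln^{(q)}m$ to the iterated logarithms appearing in the etalon functions. With $y\to\infty$, one has $\ln^{(q+1)}m=\ln y$ and, for $q\ge 2$, $\ln^{(q-1)}m=e^{y}$. Hence for every $\beta'>0$,
\begin{equation*}
(\ln^{(q+1)}m)^{\beta'}=o\bigl((\ln^{(q)}m)^{\beta/2}\bigr)\quad\text{and}\quad(\ln^{(q)}m)^{2\beta}=o\bigl((\ln^{(q-1)}m)^{\beta'}\bigr),\qquad m\to\infty.
\end{equation*}
For the boundary case $q=1$, the level-$0$ etalon $m\exp((\ln m)^{\beta_0})$ with $\beta_0\in(0,1)$ likewise dominates $m(\ln m)^{2\beta}$, since $\exp((\ln m)^{\beta_0})/(\ln m)^{2\beta}=\exp((\ln m)^{\beta_0}-2\beta\ln\ln m)\to\infty$. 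Iterating $\ln(Cm)=\ln m+\ln C$ shows that $\ln^{(q)}(Cm)-\ln^{(q)}m\to 0$ for $q\ge 2$ and is bounded by $\ln C$ for $q=1$, so $(\ln^{(q)}(Cm))^{\alpha}/(\ln^{(q)}m)^{\alpha}\to 1$; this is what lets the pointwise asymptotics transfer to the $\preccurlyeq$ quasi-order.

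From the hypothesis $\Ldim^q A=\LLdim^q A=\beta\in(0,+\infty)$, taking $\varepsilon=\beta/2$ in the defining sup/inf yields the two-sided squeeze
\begin{equation*}
m(\ln^{(q)}m)^{\beta/2}\;\preccurlyeq\;\gamma_A(m)\;\preccurlyeq\;m(\ln^{(q)}m)^{2\beta}.
\end{equation*}
Both conclusions now follow by contradiction. Should some $\beta'\in\R^+$ lie in the set defining $\Ldim^{q+1}A$, chaining $\gamma_A(m)\preccurlyeq m(\ln^{(q+1)}m)^{\beta'}$ with the lower squeeze would give $m(\ln^{(q)}m)^{\beta/2}\preccurlyeq m(\ln^{(q+1)}m)^{\beta'}$, contradicting the first asymptotic; hence the defining set is empty and by the extreme-value convention $\Ldim^{q+1}A=+\infty$. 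Dually, a candidate $\beta'\in\R^+$ (or $\beta_0\in(0,1)$ when $q=1$) in the set defining $\LLdim^{q-1}A$, combined with the upper squeeze, would yield $m(\ln^{(q-1)}m)^{\beta'}\preccurlyeq m(\ln^{(q)}m)^{2\beta}$ (respectively $m\exp((\ln m)^{\beta_0})\preccurlyeq m(\ln m)^{2\beta}$), contradicting the second or third asymptotic; that set is therefore empty too, so $\LLdim^{q-1}A=0$.

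The proof is essentially bookkeeping. The only mildly delicate step, and so the main obstacle to a clean write-up, is checking that the multiplicative constants hidden in the $\preccurlyeq$ relation do not spoil any of the comparisons. This reduces to the elementary control on $\ln^{(q)}(Cm)-\ln^{(q)}m$ recorded above, together with the observation that for each etalon function $g$ in play, the ratio $g(Cm)/g(m)$ is bounded as $m\to\infty$.
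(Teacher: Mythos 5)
Your proof is correct; the paper itself gives no argument for this lemma, stating only ``One checks that\ldots''. Your reduction, via the substitution $y=\ln^{(q)}m$, to the standard hierarchy ``exponential dominates polynomial dominates logarithm'' is the natural route, and the two-sided squeeze $m(\ln^{(q)}m)^{\beta/2}\preccurlyeq\gamma_A(m)\preccurlyeq m(\ln^{(q)}m)^{2\beta}$ you extract from $\Ldim^q A=\LLdim^q A=\beta\in(0,+\infty)$ is precisely what is needed (the defining sets are respectively upward- and downward-closed in $\beta'$, so $2\beta$ and $\beta/2$ lie in them). You also handle correctly the boundary case $q=1$, where the level-$0$ comparison function $m\exp((\ln m)^{\beta_0})$ replaces the iterated-logarithm etalon, and you explicitly verify the one technical point most writers would elide: that the constant dilation $m\mapsto Cm$ in the definition of $\preccurlyeq$ is harmless because $\ln^{(q)}(Cm)/\ln^{(q)}(m)\to 1$ (for $q\ge 2$, even $\ln^{(q)}(Cm)-\ln^{(q)}(m)\to 0$). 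This is a complete and careful fill-in of a proof the paper omits.
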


Assume that generators $X=\{x_1,\dots,x_k\}$ are assigned positive weights $\wt(x_i)=\lambda_i$, $i=1,\dots,k$.
Define a {\it weight growth function}:
$$
\tilde \gamma_A(n)=\dim_K\langle x_{i_1}\cdots x_{i_m}\mid \wt(x_{i_1})+\cdots+\wt(x_{i_m})\le n,\
          x_{i_j}\in X\rangle_K,\quad n\ge 0.
$$
(Where in case of (restricted) Lie algebras one considers (restricted) Lie monomials as above).
Set $C_1=\min\{\lambda_i\mid i=1,\dots,k \}$, $C_2=\max\{\lambda_i\mid i=1,\dots,k \}$,
then $\tilde\gamma_A(C_1 n) \le \gamma_A(n)\le \tilde\gamma_A(C_2 n)$ for $n\ge 1$.
Thus, we obtain an equivalent growth function $\tilde \gamma_A(n)\sim\gamma_A(n)$.
Therefore, we can use the weight growth function $\tilde\gamma_A(n)$ in order to
compute the Gelfand-Kirillov dimensions and $\Ldim^q A$, $\LLdim^q A$ as well.

Suppose that $L$ is a Lie algebra and $X\subset L$.
By $\Lie(X)$ denote the subalgebra of $L$ generated by $X$.
In case $L$ is a restricted Lie algebra $\Lie_p(X)$ denotes the restricted subalgebra of $L$ generated by $X$.
Similarly, assume that $X$ is a subset in  an associative algebra $A$.
Write $\Alg(X)\subset A$ to denote the associative subalgebra (without unit) generated by~$X$.

\subsection{Divided power algebra and its derivations}
Fix $\ch K=p>0$.
Let $\Theta$  be an arbitrary non-empty set.
Fix a tuple of integers $\bar S=\{S_a\in \N | a\in \Theta \}$.
%We attach a divided power variable~$t_a$ for each $a\in \Theta$.
We consider a {\it divided power algebra} $R=R(\Theta,\bar S)$ which $K$-basis consists of formal symbols
$$\bigg\{ \prod_{a\in \Theta}t_a^{(i_a)} \ \bigg|\ 0\le i_a< p^{S_a}, \ a\in \Theta\bigg\}, $$
where only finitely many formal powers $i_a$ are non-zero.
Define a product of these elements as
$$\bigg(\prod_{a\in \Theta}t_a^{(i_a)}\bigg )\cdot \bigg (\prod_{a\in \Theta}t_a^{(j_a)}\bigg)=
 \prod_{a\in \Theta}\binom{i_a+j_a}{i_a} t_a^{(i_a+j_a)}. $$
The product is well defined and
$R=R(\Theta,\bar S)$ is an associative commutative ring with unit, which
is isomorphic to a ring of truncated polynomials~\cite{Strade1}.

Fix $a\in \Theta$. Define an action $\partial_{a}$ on the whole of $R$ acting on
the respective divided variables only:
$\partial_{a}(t_a^{(i_a)}):=t_a^{(i_a-1)}$, $i_a\in\{0,\dots,p^{S_a}-1\}$,
where $t_a^{(0)}=1$ and $t_a^{(l)}=0$ for $l<0$.
We obtain derivations $\partial_{a}\in \Der R$, $a\in \Theta$.
Their $p^m$-powers are also derivations and
$\partial_{a}^{p^m}(t_a^{(i_a)})=t_a^{(i_a-p^m)}$, %where $i_a\in\{0,\dots,p^{S_a}-1\}$,
$m\ge 0.$
Clearly, $\partial_{a}^{p^{m}}=0$ for $m\ge S_a$.
For more properties of divided power algebras and their derivations, see~\cite{Strade1}.

%****************************************************************************************************************
\section{Main results: Clover restricted Lie algebras of Quasi-linear Growth}
\label{Smain}
\subsection{Clover restricted Lie algebras}
Recently, the author introduced a large class of {\it drosophila Lie algebras}~\cite{Pe20flies},
that yields a uniform generalized construction including
some examples of (restricted) Lie (super)algebras considered before~\cite{Pe17,Pe16}.
In particular, it includes a family of 2-generated restricted Lie algebras studied in~\cite{Pe17};
now we call such algebras as {\it duplex Lie algebras}.

\begin{Example}[family of restricted Lie algebras $\LL(\Xi)$  in~\cite{Pe17}]\label{E2}
Let $\ch K=p>0$.
Let $\Theta=\{x_n,y_n|n\ge 0\}$ and consider a tuple of integers $\Xi=(S_n,R_n| n\ge 0)$.
As described above, these parameters yield the divided power algebra with a basis
$\Omega(\Xi):=\langle x_0^{(\xi_0)}y_0^{(\eta_0)}\!\!\!\cdots x_i^{(\xi_i)}y_i^{(\eta_i)}|
0\le\xi_i<p^{S_i}, 0\le \eta_i<p^{R_i}, i\ge 0\rangle_K$. Define {\sc pivot elements} belonging to $\Der \Omega(\Xi)$ recursively:
\begin{equation}\label{aibip}
\begin{split}
a_i &= \partial_{x_i} + x_i^{(p^{S_i}-1)} y_i^{(p^{R_i}-1)} a_{i+1};\\
b_i &= \partial_{y_i} + x_i^{(p^{S_i}-1)} y_i^{(p^{R_i}-1)} b_{i+1};
\end{split}
\qquad\quad i\ge 0.
\end{equation}
Define the 2-generated restricted Lie algebra $\LL(\Xi):=\Lie_p(a_0,b_0)\subset \Der \Omega(\Xi)$.
\end{Example}

Now we define the main object to study in the present work.
\begin{Definition}\label{Def1}
Now, let $\Theta=\{x_n,y_n,z_n|n\ge 0\}$.
Fix the same tuple of integers $\Xi=(S_n,R_n| n\ge 0)$ as above and consider another
divided power algebra with a basis
$$R=R(\Xi):=\Big\langle x_0^{(\a_0)}y_0^{(\b_0)}z_0^{(\gamma_0)}\!\!\cdots x_i^{(\a_i)}y_i^{(\b_i)}z_i^{(\gamma_i)}\ \Big |\
0\le\a_i<p^{S_i},\ 0\le \b_i,\gamma_i<p^{R_i},\ i\ge 0\Big\rangle_K. $$
We draw attention that pairs of divided variables $y_i$, $z_i$ have the same top indices determined by $R_i$, for all $i\ge 0$.
This trick is important for further computations to be feasible at all.
We define recursively the  {\sc pivot elements} belonging to $\Der R(\Xi)$:
\begin{equation}\label{pivot-3}
\begin{split}
v_i &=\dd_{x_i}+x_{i}^{(p^{S_{i}}-1)} y_{i}^{(p^{R_{i}}-1)} v_{i+1} ;\\
w_i &=\dd_{y_i}+y_{i}^{(p^{R_{i}}-1)} x_{i}^{(p^{S_{i}}-1)} w_{i+1};\\
u_i &=\dd_{z_i}+z_{i}^{(p^{R_{i}}-1)} x_{i}^{(p^{S_{i}}-1)} u_{i+1};
\end{split}\qquad\qquad i\ge 0.
\end{equation}
%%the paper is devoted to the study of these algebras.
For any $i\ge 0$, we get explicit formulas:
\begin{equation}\label{aibi0}
\begin{split}
v_i &= \partial_{x_i} {+} x_i^{(p^{S_i}-1)}y_i^{(p^{R_i}-1)}\Big(\partial_{x_{i+1}}{+} x_{i+1}^{(p^{S_{i+1}}-1)} y_{i+1}^{(p^{R_{i+1}}-1)}
       \Big(\partial_{x_{i+2}} {+}x_{i+2}^{(p^{S_{i+2}}-1)} y_{i+2}^{(p^{R_{i+2}}-1)}\Big(\cdots  \Big)\Big)\Big),\\
w_i &= \partial_{y_i} {+} y_i^{(p^{R_i}-1)}x_i^{(p^{S_i}-1)}\Big(\partial_{y_{i+1}}{+} y_{i+1}^{(p^{R_{i+1}}-1)}x_{i+1}^{(p^{S_{i+1}}-1)}
       \Big(\partial_{y_{i+2}} {+}y_{i+2}^{(p^{R_{i+2}}-1)}x_{i+2}^{(p^{S_{i+2}}-1)} \Big(\cdots  \Big)\Big)\Big),\\
u_i &= \partial_{z_i} {+} z_i^{(p^{R_i}-1)}x_i^{(p^{S_i}-1)}\Big(\partial_{z_{i+1}}{+} z_{i+1}^{(p^{R_{i+1}}-1)}x_{i+1}^{(p^{S_{i+1}}-1)}
       \Big(\partial_{z_{i+2}} {+}z_{i+2}^{(p^{R_{i+2}}-1)}x_{i+2}^{(p^{S_{i+2}}-1)} \Big(\cdots  \Big)\Big)\Big).
\end{split}
%\qquad i\ge 0.
\end{equation}
We call $i$ the {\sc length} (also {\sc generation}, following terminology~\cite{Pe20flies}) of the pivot elements above.
Now,  we define the 3-generated {\sc clover restricted Lie algebra} $\TT(\Xi):=\Lie_p(v_0,w_0,u_0)\subset \Der R(\Xi)$.
\end{Definition}

\begin{Remark}
Let us draw attention that there is some symmetry between $v_i$ and $w_i$,
while the remaining $u_i$ stays separate because there is {\bf no $\Z_3$-cyclic symmetry}
unlike the second example of a 3-generated Lie superalgebra in~\cite{Pe16}.
Another observation is that $v_i$, $w_i$ are just renaming of $a_i$, $b_i$ in~\eqref{aibip}, for all $i\ge 0$.
\end{Remark}

\begin{Remark}
Example~\eqref{aibip} cannot supply the lower estimate for Lie algebras of oscillating growth constructed in~\cite{Pe20flies}
and it was necessary to modify that example.
This modification is very specific in order to make the computation feasible at all.
In terminology of~\cite{Pe20flies}, species of flies having two flies in some generation either have two flies
in all subsequent generations or go extinct.
The goal in introducing the clover species is to have three flies in each generation
(yielding respective three pivot elements~\eqref{pivot-3}),
so that at some moment three flies can produce a wild specie and the constructed Lie algebra
can return to a fast intermediate growth.
To this end we extend the duplex specie in a specific "skew" way and obtain the clover species.
This idea enables us to construct restricted Lie algebras with an oscillating growth in~\cite{Pe20flies}
using the two theorems below.
\end{Remark}
%Also, our approach allows to use computations of~\cite{Pe17}.

\subsection{Main results}
As a specific case, we construct restricted Lie algebras of quasi-linear growth.
The main goal of the paper is to prove the following two theorems, which
are an important part of the construction of nil restricted Lie algebras of oscillating intermediate growth in~\cite{Pe20flies},
namely, the algebras constructed below are responsible for periods of quasi-linear growth of that algebras.
We stress that it was necessary to change the approach of~\cite{Pe17}, because in a further construction
of nil restricted Lie algebras of oscillating growth~\cite{Pe20flies} we need three so called "flies" in each generation.
The asymptotic in~\cite{Pe17} has the upper and lower bounds with different constants $C_1$, $C_2$.
Now we are proving a stronger asymptotic with bounds $C+o(1)$, constant being the same for both sides.
Moreover, the second theorem yields even slower quasi-linear growths.

\begin{Theorem}\label{Tparam}
Let $K$ be a field, $\ch K=p> 0$, fix $\kappa\in(0,1)$.
There exists a tuple of integers $\Xi_\kappa$ such that
the 3-generated clover restricted Lie algebra $\TT=\TT(\Xi_\kappa)=\Lie_p(v_0,w_0,u_0)$ has the following properties.
\begin{enumerate}
\item
$\gamma_{\TT}(m)=m\exp \big((C+o(1))(\ln m)^\kappa\big)$ as $m\to\infty$, where $C:=2(\ln p)^{1-\kappa}/\kappa^\kappa$;
\item $\GKdim \TT=\LGKdim \TT= 1$;
\item $\Ldim^0 \TT=\LLdim^0 \TT=\kappa$;
\item the growth function $\gamma_\TT(m)$ is not linear;
\item algebras $\TT(\Xi_\kappa)$ for different parameters $\kappa\in(0,1)$ are not isomorphic.
\end{enumerate}
\end{Theorem}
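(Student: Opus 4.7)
The plan is to build a tractable monomial-type basis for $\TT(\Xi)$, install a weight grading on it equivalent to the length grading by the generators $v_0,w_0,u_0$, read off a closed formula for the growth, and then calibrate the tuple $\Xi_\kappa$ so that the asymptotic in (i) holds with the exact constant $C$. First I would verify inductively from~\eqref{pivot-3} that every pivot $v_i, w_i, u_i$ lies in $\TT(\Xi)$: each can be extracted from $v_0,w_0,u_0$ via commutators and iterated $p$-powers, since, for instance, a suitable combination of $v_i^{[p^{S_i}]}$ with powers of $w_i$ has leading term a scalar multiple of $v_{i+1}$. Then I would show that monomials of the shape
\[
x_0^{(\alpha_0)}y_0^{(\beta_0)}z_0^{(\gamma_0)}\cdots x_{i-1}^{(\alpha_{i-1})}y_{i-1}^{(\beta_{i-1})}z_{i-1}^{(\gamma_{i-1})}\cdot P_i,\qquad P_i\in\{v_i,w_i,u_i\},
\]
with exponents in the natural ranges, form a basis of $\TT(\Xi)$; linear independence follows at once from the leading-derivation analysis inside $\Der R(\Xi)$.

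Next I assign weight $\lambda_i$ to each generation-$i$ pivot, with recursion $\lambda_{i+1}\asymp p^{S_i+R_i}\lambda_i$ read from~\eqref{pivot-3} (one commutes past the $x_i^{(p^{S_i}-1)}y_i^{(p^{R_i}-1)}$ prefactor to isolate $v_{i+1}$), and extend weights additively to the divided-power prefixes. This gives a weight grading equivalent to the length grading and hence an equivalent weight growth function $\tilde\gamma_\TT(n)$. A direct count shows that for $n\in[\lambda_i,\lambda_{i+1})$ the dominant contribution comes from generation-$i$ monomials, yielding $\tilde\gamma_\TT(n)\sim n\cdot D_i/\lambda_i$, where $D_i=\prod_{j<i}p^{S_j+2R_j}$ is the dimension of the generation-$<i$ divided-power subalgebra. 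Taking logarithms, $\ln(\gamma_\TT(n)/n)\sim (\ln p)\sum_{j<i}R_j$ at $n\sim\lambda_i$, the crucial factor $2$ in $D_i$ (versus only one in $\lambda_i$) reflecting that $y_i$ and $z_i$ share the top index $R_i$.

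To realize the target asymptotic $\gamma_\TT(m)\sim m\exp(C(\ln m)^\kappa)$ I would solve the discrete optimization
\[
(\ln p)\sum_{j<i}R_j\;\sim\;C\bigl((\ln p)\textstyle\sum_{j<i}(S_j+R_j)\bigr)^\kappa
\]
under the constraint that $\Xi_\kappa=(S_n,R_n)$ is a concrete tuple of positive integers. A standard Lagrange-style variational argument on the ratio $\sum R_j/(\sum(S_j+R_j))^\kappa$ produces explicit prescribed growth rates for $S_n,R_n$ and pins down the optimal constant as $C=2(\ln p)^{1-\kappa}/\kappa^\kappa$: the factor $2$ from the $(y,z)$-pair and the $\kappa^{-\kappa}$ from the power-law optimization. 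Item (i) then follows by matching upper and lower bounds derived from the monomial count.

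Items (ii)--(v) are routine consequences of (i). For (ii), $\gamma_\TT(m)=m^{1+o(1)}$ forces $\GKdim\TT=\LGKdim\TT=1$. For (iii), comparing with the etalon family $m\exp((\ln m)^\beta)$ gives $\Ldim^0\TT=\LLdim^0\TT=\kappa$. For (iv), non-linearity follows because $(\ln m)^\kappa\to\infty$. For (v), since $\Ldim^0$ is invariant under change of finite generating set, distinct $\kappa$ give non-isomorphic algebras. The hard part will be sharpening the weight recursion to $\lambda_{i+1}=(1+o(1))\,p^{S_i+R_i}\lambda_i$, rather than merely $\lambda_{i+1}\asymp p^{S_i+R_i}\lambda_i$: to collapse the upper and lower bounds of (i) to a single constant $C$ (improving the bracketing constants $C_1<C_2$ of~\cite{Pe17}), one needs tight two-sided control of the commutator-extraction procedure, together with a precise choice of $\Xi_\kappa$ so that the approximation errors in the partial-sum asymptotics vanish in the limit. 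This tightening is what the clover geometry, with its three pivots per generation, is designed to make feasible.
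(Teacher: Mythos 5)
Your overall plan — monomial basis, weight grading, calibrate the tuple — is the right shape, but the engine that would make it produce the correct constant $C$ is miscalibrated at the very first step, and the error does not wash out.

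The weight recursion you posit, $\lambda_{i+1}\asymp p^{S_i+R_i}\lambda_i$, is wrong. Reading weight-homogeneity from~\eqref{pivot-3}, i.e.\ forcing $\wt(\partial_{x_i}) = \wt\big(x_i^{(p^{S_i}-1)}y_i^{(p^{R_i}-1)}v_{i+1}\big)$ together with the analogous two equations for $w$ and $u$, gives the matrix recursion~\eqref{matrix3}, whose solution (Lemma~\ref{Lweight_pivo}) is
\[
\wt(v_{n})=\wt(w_n)=\wt(u_n)=\prod_{i=0}^{n-1}\bigl(p^{S_i}+p^{R_i}-1\bigr),
\]
so $\lambda_{i+1} = (p^{S_i}+p^{R_i}-1)\,\lambda_i$: a \emph{sum}, not a product $p^{S_i}\!\cdot p^{R_i}$. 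The two differ by a factor $\approx\min(p^{S_i},p^{R_i})$ per step, which accumulates. For the construction the paper actually uses ($R_i\equiv 1$, $S_i=[(i+1)^{1/\kappa-1}]$), the correct $\lambda_i$ is $\asymp p^{\sum_{j<i}S_j}$ with a bounded correction, whereas your $\lambda_i$ carries an extra spurious factor $p^{\sum_{j<i}R_j}$. Consequently the ratio $D_i/\lambda_i$ becomes $p^{\sum R_j}$ in your account but is in fact $p^{2\sum R_j}$; your invocation of the "factor $2$ reflecting that $y_i$ and $z_i$ share the top index" does not repair this, because that factor is already counted once in $D_i$ and your $\lambda_i$ wrongly subtracts one copy. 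Carrying your numbers through your own optimization yields $C=(\ln p)^{1-\kappa}/\kappa^\kappa$, half the true value $C=2(\ln p)^{1-\kappa}/\kappa^\kappa$. So the headline asymptotic in~(i) does not come out.

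Two secondary points. First, the basis you sketch, namely a single divided-power tail in $x,y,z$ times a pivot $P_i\in\{v_i,w_i,u_i\}$, is not the actual basis of $\TT(\Xi)$: the paper's basis (Theorem~\ref{Tsemidirect}) distinguishes monomials of \emph{first type} (tails in $x,y$ only, heads $h_n^{\xi,\eta}$ which are two-term combinations of $v_n$ and $w_n$) from monomials of \emph{second type} (tails in $x,y,z$, heads $g_n^{\xi,\zeta}$ on $u_n$), plus power standard monomials; the count is dominated by the second type, but your independence argument should address the first type's two-term heads and the restricted tails. Second, the paper never performs a Lagrange-style optimization over tuples: it simply fixes $R_i\equiv 1$, $S_i=[(i+1)^{1/\kappa-1}]$ from the outset and proves matching upper and lower bounds by direct monomial-counting with two-sided control via $\prod(1+p^{1-S_i})<\infty$. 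Your "variational" step would not only be extra work but, fed your incorrect $\lambda_i$, produces the wrong constant anyway. Items~(ii)--(v) being consequences of~(i) is correct, but~(i) as you set it up does not hold.
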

%To prove this theorem we consider the tuple: $\Xi_\kappa:=(S_i:=[(i+1)^{1/\kappa-1}], R_i:=1\mid i\ge 0)$.

In comparison with~\cite{Pe17},  algebras with even slower quasi-linear growth are constructed in the next theorem.

\begin{Theorem}\label{Tparam2}
Let $\ch K=p> 0$, fix parameters $q\in\N$, $\kappa\in\R^+$.
There exists a tuple of integers $\Xi_{q,\kappa}$ such that
the 3-generated clover restricted Lie algebra $\TT=\TT(\Xi_{q,\kappa})=\Lie_p(v_0,w_0,u_0)$ has the following properties.
\begin{enumerate}
\item
$\gamma_{\TT}(m)=
m \big(\ln^{(q)} \!m\big )^{\kappa+o(1)}$ as $m\to\infty$;
\item
$\GKdim \TT=\LGKdim \TT= 1$;
\item $\Ldim^q \TT=\LLdim^q \TT=\kappa$;
\item
the growth function $\gamma_\TT(m)$ is not linear;
\item algebras $\TT(\Xi_{q,\kappa})$ for different pairs $(q,\kappa)$ are not isomorphic.
\end{enumerate}
\end{Theorem}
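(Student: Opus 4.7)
The plan is to adapt the framework required to prove Theorem~\ref{Tparam} by selecting the tuple $\Xi_{q,\kappa}=(S_n,R_n)_{n\ge 0}$ so as to force a substantially slower, level-$q$ quasi-linear growth. Whereas for Theorem~\ref{Tparam} one should take $S_n+R_n$ to grow polynomially in $n$, here one wants $T_n:=p^{S_n+R_n}$ with $\ln^{(q-1)}T_n$ asymptotic to a power of $n$; specifically, a choice making $\ln^{(q-1)} T_n\sim (cn)^{1/\kappa}$ for an appropriate constant $c>0$ should produce the target asymptotic. Concretely, I would set $S_n\approx R_n$ (to respect the skew clover asymmetry between $x_i$ and the pair $y_i,z_i$ highlighted in Definition~\ref{Def1}) and fix the remaining freedom so that $T_n$ has a clean iterated-exponential profile, with a monotone interpolation smoothing out integer rounding.

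The heart of the proof is an asymptotic analysis of the weight-growth function. Following the scheme employed for the duplex algebras $\LL(\Xi)$ in~\cite{Pe17}, one assigns weight $1$ to each of $v_0,w_0,u_0$ and, via the self-similar recursion~\eqref{pivot-3}, tracks the effective weight $W_i$ of a generation-$i$ pivot: each nesting inside $v_0$ contributes the factor $x_j^{(p^{S_j}-1)}y_j^{(p^{R_j}-1)}$ (and similarly for $w_0,u_0$), so $W_i$ becomes an explicit product in the prefix $\{T_j\}_{j<i}$. One then exhibits a good monomial basis of $\TT$ consisting of decorations of the pivot elements by divided-power monomials in the variables $x_j^{(\a_j)}y_j^{(\b_j)}z_j^{(\g_j)}$ with $j\le i$, and the weight-growth function decomposes as a sum over generations of the form $\tilde\gamma_\TT(n)\approx\sum_{W_i\le n} N_i(n/W_i)$, where $N_i$ counts the admissible decorations at generation $i$ and is controlled by the exponents $p^{S_j},p^{R_j}$.

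The upper bound on $\gamma_\TT(m)$ then follows by direct estimation of this sum, while the lower bound requires producing, at each generation $i$, a sufficiently rich supply of linearly independent left-normed commutators in the pivot elements. This is exactly where the clover structure (three pivots per generation, each with access to its own divided variable) is used, as emphasized in the Remark following Definition~\ref{Def1}. The main technical obstacle will be achieving the sharp $\kappa+o(1)$ exponent in $(\ln^{(q)}m)^{\kappa+o(1)}$ rather than merely a pair of distinct upper/lower bounds: after $q$ iterated logarithms, multiplicative errors of constant size in $T_n$ translate into additive shifts in the exponent of $\ln^{(q)}m$. Making these errors vanish asymptotically requires a careful tuning of $\Xi_{q,\kappa}$, a monotone interpolation of $n\mapsto T_n$, and an Abel-summation-type estimate controlling the contribution of lower generations.

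Items (ii)--(v) are then essentially routine once (i) is in hand. Statement (ii) follows from the gap $\GKdim\in\{0\}\cup[1,+\infty)$ combined with the subpolynomial growth of $(\ln^{(q)}m)^{\kappa+o(1)}$; (iii) is a direct translation of (i) via the definitions of $\Ldim^q,\LLdim^q$; (iv) holds since $\gamma_\TT(m)/m\to\infty$ under (i); and (v) follows because, by the Lemma immediately preceding the theorem, the full sequence of invariants $\{(\Ldim^l\TT,\LLdim^l\TT)\}_{l\ge 0}$ singles out the pair $(q,\kappa)$ uniquely, so different parameters yield pairwise non-isomorphic algebras.
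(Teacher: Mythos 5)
The overall framework you outline (weight function normalized by $\wt(v_0)=\wt(w_0)=\wt(u_0)=1$, the monomial basis, splitting the count into a sum over generations, items (ii)--(v) as routine consequences of (i)) does match the paper. However, the heart of the theorem is the explicit choice of $\Xi_{q,\kappa}$, and there your prescription is wrong in a way that kills the estimate.

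You propose taking $S_n\approx R_n$. This is exactly backwards. The number of admissible tails at generation $n$ is $\prod_{i<n}p^{S_i+2R_i}$ (since $y_i$ and $z_i$ each range over $p^{R_i}$ divided powers), while the weight of $v_n$ is $\prod_{i<n}(p^{S_i}+p^{R_i}-1)$, which is essentially $\prod_{i<n}p^{\max(S_i,R_i)}$. With $S_n\approx R_n$ both growing, the tail count grows roughly like $\wt(v_n)^3$; indeed Corollary~\ref{Cconstant} gives, for constant $S=R$ large, $\GKdim\TT\to 3$. To get quasi-linear growth one must keep $R_n$ bounded (the paper takes $R_n\equiv 1$), so that $\prod_{i<n}p^{2R_i}=p^{2(n-1)}$ contributes only a factor $p^{2n}$, and only $S_n$ grows. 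The ``skew'' in Definition~\ref{Def1} refers to $y_i,z_i$ sharing the cap $R_i$ while $x_i$ has $S_i$; it does not suggest $S_n\approx R_n$, and reading it that way leads you to the GK-dimension-$3$ regime instead of dimension $1$.

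There is also a level confusion in your heuristic ``$\ln^{(q-1)}T_n\sim(cn)^{1/\kappa}$'' with $T_n=p^{S_n+R_n}$. What must be tuned is the cumulative sum $S_0+\cdots+S_{n-1}$, not the individual term $S_n$: one needs $\wt(v_n)\approx p^{S_0+\cdots+S_{n-1}}$ to satisfy $\ln^{(q+1)}\wt(v_n)\approx\lambda n$ so that inverting $m\approx\wt(v_n)$ gives $n\approx\tfrac1\lambda\ln^{(q+1)}m$ and hence $p^{2n}=(\ln^{(q)}m)^{\kappa+o(1)}$. The paper enforces this directly by setting $\lambda:=2\ln p/\kappa$, $R_i\equiv 1$, $S_0=1$, and $S_n:=[\exp^{(q)}(\lambda(n+2))]+1-S_0-\cdots-S_{n-1}$, so that the partial sums are literally iterated exponentials; the constant $\kappa$ then emerges from $2\ln p/\lambda=\kappa$, and the $o(1)$ absorbs the passage from $\ln^{(q-1)}\log_p m$ to $\ln^{(q)}m$. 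Your ``monotone interpolation'' and ``Abel-summation'' language plays the role of the geometric-series bound~\eqref{bound3} and the convergent-infinite-product bound~\eqref{boundSupper} in the paper; that part is fine, but it cannot rescue the wrong choice of tuple.
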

\begin{Remark}
Similar to~\cite{Pe17}, we can also consider the associative algebra $\AA=\Alg(v_0,w_0,u_0)\subset\End R(\Xi)$ and describe its growth as
$\gamma_{\AA}(m)= m^2 \big(\ln^{(q)} \!m\big )^{\kappa+o(1)}$, as $m\to\infty$.
In particular, $\GKdim \AA=\LGKdim \AA=2$, let us  call such a growth {\it quasi-quadratic}.
\end{Remark}

\begin{Theorem}[\cite{Pe20flies}, Theorem~7.7]\label{Tnillity}
Fix $\ch K=p> 0$ and a tuple of integers $\Xi=(S_n,R_n| n\ge 0)$.
Consider the respective clover restricted Lie algebra  $\TT=\TT(\Xi)$.
Then $\TT$ has a nil $p$-mapping.
\end{Theorem}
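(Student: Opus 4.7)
The plan is to exploit the self-similar structure of $\TT$ together with the recursive definition~\eqref{pivot-3}, in the spirit of the nillity proofs for the Fibonacci and related self-similar restricted Lie algebras~\cite{Pe06,ShZe08,Pe17}. The starting point is the natural ``generation'' filtration on the divided power algebra: let $R^{(n)}\subset R(\Xi)$ denote the finite-dimensional divided subalgebra generated by $\{x_j,y_j,z_j\mid j<n\}$. Since $R(\Xi)=\varinjlim R^{(n)}$ and each $R^{(n)}$ is finite dimensional, any derivation $D\in\TT\subset\Der R(\Xi)$ restricts to a derivation of (the span of) $R^{(n)}$ lying in a finite-dimensional restricted Lie algebra; hence restriction to $R^{(n)}$ is automatically $p$-nilpotent with some bound $N(n,D)$. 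To conclude that $D$ itself is $p$-nilpotent, it suffices to show that $N(n,D)$ can be taken independent of $n$, i.e. that a single $[p]$-power $D^{[p]^N}$ annihilates every $R^{(n)}$ simultaneously.

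The crucial recursive lemma I would prove is that every $D\in\TT$ admits, for each $n$, a decomposition $D=D_{<n}+D_{\ge n}$ with the following properties. The piece $D_{<n}$ is a Lie polynomial in the partial derivations $\partial_{x_j},\partial_{y_j},\partial_{z_j}$ for $j<n$ with coefficients in $R^{(n)}$, acting nilpotently on $R^{(n)}$ with explicit bound (of order a polynomial in $p^{S_0+\cdots+S_{n-1}+R_0+\cdots+R_{n-1}}$). The piece $D_{\ge n}$ is an $R^{(n)}$-linear combination $\sum_\alpha f_\alpha D_\alpha$, where $f_\alpha\in R^{(n)}$ and $D_\alpha\in\Lie_p(v_n,w_n,u_n)\subset\TT$, i.e.\ it is built from the shifted pivots of generation $n$ and therefore vanishes on $R^{(n)}$. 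This decomposition is read off by expanding each nested pivot in~\eqref{aibi0} until all $\partial$-letters of level $<n$ have been used, leaving precisely $R^{(n)}$-multiples of generation-$n$ pivots.

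With the lemma in hand, the $p$-power formula~\eqref{power_P} applied to $D=D_{<n}+D_{\ge n}$ shows that $D^{[p]}$ has exactly the same recursive shape, and by iterating one gets $D^{[p]^N}$ in which the $D_{<n}$-part has been killed by its own nilpotency order, while all correction terms $s_i(D_{<n},D_{\ge n})$ and the contribution of $D_{\ge n}^{[p]}$ sit in the ideal annihilating $R^{(n)}$. A simultaneous induction on the complexity of $D$ (its bracket length in $v_0,w_0,u_0$) and on $n$ then yields a uniform bound $N$ with $D^{[p]^N}|_{R^{(n)}}=0$ for all $n$, whence $D^{[p]^N}=0$ in $\Der R(\Xi)$.

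The main obstacle is controlling the Jacobson correction terms $s_i(x,y)$ in~\eqref{power_P} when $x,y$ are arbitrary Lie polynomials in the three species of pivots: these terms produce mixed commutators of $v_j$'s, $w_j$'s and $u_j$'s at each generation, and one must check that they remain expressible as $R^{(n)}$-linear combinations of pivots of generation $n$, rather than creating residues outside the recursive scheme. This is precisely the role of the asymmetric but carefully balanced design in Definition~\ref{Def1}, where $y_i$ and $z_i$ share the common top index $p^{R_i}-1$: it is exactly what makes the products $x_i^{(p^{S_i}-1)}y_i^{(p^{R_i}-1)}$ and $z_i^{(p^{R_i}-1)}x_i^{(p^{S_i}-1)}$ interact cleanly under iterated brackets, so that the bookkeeping of $s_i$-terms closes at the next generation. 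Verifying that closure explicitly is the technical heart of the argument.
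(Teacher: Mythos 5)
Your first paragraph contains a fatal logical error. You write that since $D|_{R^{(n)}}$ lies in the finite-dimensional restricted Lie algebra $\Der R^{(n)}$, it is ``automatically $p$-nilpotent with some bound $N(n,D)$.'' This is false: membership in a finite-dimensional restricted Lie algebra only makes an element \emph{$p$-algebraic} (the iterated $p$-powers eventually satisfy a linear dependence), not \emph{$p$-nilpotent}. Finite-dimensional restricted Lie algebras of derivations routinely contain toral elements; for instance, on $K[x]/(x^p)$ the derivation $x\partial_x$ satisfies $(x\partial_x)^{[p]}=x\partial_x$, and in the divided power setting $D=\partial_x+x^{(p-1)}y\partial_y$ on $K[x,y]/(x^p,y^p)$ has $D^{[p]}=-y\partial_y$, again a non-nilpotent toral element produced from a $p$-power of an element that \emph{looks} like the pivots. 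Your entire strategy rests on this ``automatic'' nilpotence, and without it the claimed uniform bound $N$ has no foundation. The same gap reappears in your second paragraph, where you assert without argument that the piece $D_{<n}$ ``acts nilpotently on $R^{(n)}$'' -- but that assertion is exactly what has to be proved, and it is the genuinely hard part of any nillity argument for these algebras.

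For context: the paper itself does not prove this theorem internally; it cites \cite{Pe20flies}, Theorem~7.7, which in turn modifies the proof of \cite{Pe17}, Theorem~8.6. Those arguments are substantially more delicate than your sketch. They proceed by a careful induction that tracks the weight (and multidegree) of elements under the $p$-map, exploiting the $\NO^3$-grading (Theorem~\ref{Tgraded}), and they must show concretely that the Jacobson correction terms $s_i$ in~\eqref{power_P} never produce a toral residue -- precisely the phenomenon your counterexample-blind framework cannot rule out. You correctly intuit that the ``balanced'' design in Definition~\ref{Def1} (the shared top index $p^{R_i}-1$ for $y_i,z_i$) is what makes this bookkeeping close at each generation, and you honestly acknowledge that verifying this closure is ``the technical heart''; but that acknowledgment means the proposal stops short of a proof even setting aside the false claim. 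To repair the argument you would need to replace the appeal to finite dimensionality with an explicit degree or weight estimate showing that $D^{[p^k]}$ strictly gains weight at a rate that eventually forces it to vanish on $R^{(n)}$ \emph{together with} a demonstration that no toral component survives in the homogeneous decomposition.
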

\begin{proof}
The nillity is established in a more general setting of so called drosophila Lie algebras with uniform parameters
in~\cite{Pe20flies}, Theorem~7.7.
That proof is an essential modification of the approach in case of 2-generated duplex Lie algebras
$\LL(\Xi)$~\cite{Pe17}, Theorem~8.6.
\end{proof}
Let us describe some more results and ideas of the paper.

\begin{itemize}
%\begin{enumerate} \renewcommand{\theenumi}{\roman{enumi}}
\item
We describe the structure and construct a clear monomial basis for all clover restricted Lie algebras (Theorem~\ref{Tsemidirect}).
\item
An important instrument is a notion of a weight function,
using which we prove that $\TT(\Xi)=\TT(v_0,w_0,u_0)$ is $\NO^3$-graded by multidegree in the generators (Theorem~\ref{Tgraded}).
\item
We prove that $1\le \GKdim\TT(\Xi)\le 3$ for any tuple $\Xi$ (Theorem~\ref{Tgrowth3}.)
\item
Let the sequence $\Xi=(S_i,R_i|i\ge 0)$ be periodic,
then $\TT(\Xi)$ is a self-similar restricted Lie algebra (Lemma~\ref{Lself})
and we compute its Gelfand-Kirillov dimension explicitly (Theorem~\ref{Tperiod}).
This result may be viewed as an analogue of the result on the intermediate growth
for the Grigorchuk periodic groups $G_\omega$ having a periodic tuple $\omega$~\cite[Theorem~B]{ErshlerZheng20}. 
\item
Let $\Xi$ be {constant}: $S_i=S$, $R_i=R$ for $i\ge 0$, where $S,R\in\N$ are fixed.
Then denote $\TT(S,R):=\TT(\Xi)$. We prove that
$\{\GKdim\TT(S,R)\mid S,R\in\N\}$ is dense on $[1,3]$ (Corollary~\ref{Cinterval}).
\end{itemize}
\begin{Remark}
We suggest that the clover restricted Lie algebras $\TT(\Xi)$ are Lie algebra
analogues of the family of the  Grigorchuk groups $G_\omega$ constructed and studied in~\cite{Grigorchuk84}.
\end{Remark}
\subsection{Nil Lie algebras of slow polynomial growth}
The Gelfand-Kirillov dimension of an associative algebra cannot belong to the interval $(1,2)$~\cite[Bergman]{KraLen}.
One has the same gap for finitely generated Jordan algebras~\cite[Martinez and Zelmanov]{MaZe96}.
The author showed that a similar gap does not exist for Lie algebras, the Gelfand-Kirillov
dimension of a finitely generated Lie algebra can be an arbitrary number $\{0\}\cup [1,+\infty)$~\cite{Pe97}.
The same fact is also established for Jordan superalgebras~\cite{PeSh18Jslow}.
Also, an interesting direction of research is constructing associative nil algebras
of different kinds of growth, in particular, of slow polynomial growth, see~\cite{LenSmo07,BellYoung11,LenSmoYoung12,Smo14}.

Now we get a stronger version of~\cite{Pe97}: the gap $(1,2)$ can be filled with {\it nil} Lie $p$-algebras.
Namely, using constant tuples, we get self-similar clover nil restricted Lie algebras which
Gelfand-Kirillov dimensions are dense on $[1,3]$ (Corollary~\ref{Cinterval}).

%***************************************************************************
\section{Structure of Clover Lie algebras $\TT(\Xi)$}
\subsection{Basic relations} We start with establishing basic relations in clover restricted Lie algebras.
In what follows, we assume that  a field $K$ of characteristic $\ch K=p>0$ and a tuple of integers
$\Xi=(S_n,R_n| n\ge 0)$ are fixed, and we consider 3-generated clover restricted Lie algebra $\TT=\Lie_p(v_0,w_0,u_0)$.

\begin{Lemma}\label{Lrelations}
Let $i\ge 0$. Then
\begin{align*}\label{vap}
v_i^{p^m}&=\dd_{x_i}^{p^m}+ x_i^{(p^{S_i}-p^m)} y_i^{(p^{R_i} -1)}v_{i+1}, \qquad\  0\le m\le S_i;\\
w_i^{p^m}&=\dd_{y_i}^{p^m}+ y_i^{(p^{R_i}-p^m)} x_i^{(p^{S_i} -1)}w_{i+1}, \qquad\  0\le m\le R_i;\\
u_i^{p^m}&=\dd_{z_i}^{p^m}+ z_i^{(p^{R_i}-p^m)} x_i^{(p^{S_i} -1)}u_{i+1},\ \qquad\  0\le m\le R_i,
\end{align*}
where $\dd_{x_i}^{p^{S_i}}=\dd_{y_i}^{p^{R_i}}=\dd_{z_i}^{p^{R_i}}=0$ above.
\end{Lemma}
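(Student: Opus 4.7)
The plan is induction on $m$; by symmetry it suffices to treat the formula for $v_i$. The base case $m=0$ is the definition \eqref{pivot-3}. For the inductive step, fix $m < S_i$, write $v_i^{p^m} = A + B$ with $A := \partial_{x_i}^{p^m}$ and $B := P_m v_{i+1}$ where $P_m := x_i^{(p^{S_i}-p^m)} y_i^{(p^{R_i}-1)} \in R$, and apply the restricted power formula \eqref{power_P}:
\[
v_i^{p^{m+1}} = (A+B)^{[p]} = A^{[p]} + B^{[p]} + (\ad A)^{p-1}(B) + \sum_{j=1}^{p-2} s_j(A,B).
\]

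Three of the four contributions I compute directly. Clearly $A^{[p]} = \partial_{x_i}^{p^{m+1}}$. The divided-power identity $x_i^{(p^{S_i}-p^m)} \cdot x_i^{(p^{S_i}-p^m)} = \binom{2(p^{S_i}-p^m)}{p^{S_i}-p^m} x_i^{(2(p^{S_i}-p^m))} = 0$, valid for $m < S_i$ because the upper index then reaches $\ge p^{S_i}$, yields $P_m^2 = 0$; since moreover $v_{i+1}$ involves only divided derivatives of generation $\ge i+1$, it commutes with multiplication by $P_m$, so $B^2 = P_m^2 v_{i+1}^2 = 0$ and hence $B^{[p]} = 0$. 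The same commutation gives $[A, h v_{i+1}] = A(h)\, v_{i+1}$ for any $h \in R$, so iterating I obtain $(\ad A)^{p-1}(B) = A^{p-1}(P_m)\, v_{i+1} = \partial_{x_i}^{(p-1)p^m}(P_m)\, v_{i+1} = x_i^{(p^{S_i}-p^{m+1})} y_i^{(p^{R_i}-1)} v_{i+1}$, precisely the second term of the target formula.

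The main obstacle is showing that each $s_j(A,B)$ with $1 \le j \le p-2$ vanishes, since each is a combination of iterated Lie commutators containing $p-j \ge 2$ copies of $B$. I plan to prove the stronger structural lemma that the Lie subalgebra $H \subset \Der R$ generated by $A, B$ equals $KA + I$, where $I$ is the $K$-span of operators of the form $Q v_{i+1}$ with $Q$ a divided monomial in $x_i, y_i$, and $I$ is an abelian ideal of $H$. Stability under $\ad A$ follows from $[A, Q v_{i+1}] = A(Q) v_{i+1} \in I$; abelianness follows from
\[
[Q_1 v_{i+1},\, Q_2 v_{i+1}] = Q_1 v_{i+1}(Q_2) v_{i+1} - Q_2 v_{i+1}(Q_1) v_{i+1} + Q_1 Q_2 [v_{i+1},v_{i+1}] = 0,
\]
since $v_{i+1}$ annihilates any element built from $x_i, y_i$ alone. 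Consequently any Lie word in $A, B$ containing at least two $B$'s already lies in $[I, I] = 0$, so every $s_j(A,B) = 0$.

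Assembling the four contributions finishes the induction; the boundary $m = S_i$ is consistent because $\partial_{x_i}^{p^{S_i}} = 0$ and $x_i^{(0)} = 1$, reducing the formula to $v_i^{p^{S_i}} = y_i^{(p^{R_i}-1)} v_{i+1}$. The formulas for $w_i$ and $u_i$ follow by the same argument after interchanging the roles of the divided variables: one uses the analogous vanishing $\bigl(y_i^{(p^{R_i}-p^m)}\bigr)^2 = 0$ (respectively $\bigl(z_i^{(p^{R_i}-p^m)}\bigr)^2 = 0$) for $m < R_i$, together with the fact that $w_{i+1}$ (respectively $u_{i+1}$) likewise annihilates every divided monomial in the generation-$i$ variables, so the abelian ideal structure transports verbatim.
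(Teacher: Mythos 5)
Your proof is correct and follows essentially the same route as the paper's: induction on $m$, the restricted power formula~\eqref{power_P}, and the observation that every term carrying two or more copies of the second summand $B = P_m v_{i+1}$ must vanish. The paper dispatches that last point in one sentence ("the summation is trivial because the second term cannot be used more than once"), whereas you spell it out via $P_m^2 = 0$ together with the abelian-ideal lemma for $\Lie(A,B) = KA + I$; your version is a careful unpacking of the same idea rather than a different argument.
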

\begin{proof}
Let us prove the first equality by induction on $m$. The base of induction $m=0$ is trivial by~\eqref{pivot-3}.
Assume that the claim is valid for $0\le m< S_i$.
The summation in~\eqref{power_P} is trivial because the second term cannot be used more than once:
\begin{align*}
v_i^{p^{m+1}} &={(v_i^{p^{m}})}^p=\Big(\dd_{x_i}^{p^m}+ x_i^{(p^{S_i}-p^m)} y_i^{(p^{R_i} -1)}v_{i+1}\Big)^p\\
&={(\partial_{x_i}^{p^{m}})}^p+\big(\ad \partial_{x_i}^{p^m} \big)^{p-1}
          \Big( x_i^{(p^{S_i}-p^m)} y_i^{(p^{R_i} -1)}v_{i+1}\Big)\\
&=\partial_{x_i}^{p^{m+1}}+
          x_i^{(p^{S_i}-{p^{m+1}})} y_i^{(p^{R_i} -1)}v_{i+1} ,\qquad\qquad 0\le m< S_i.\qedhere
\end{align*}
\end{proof}

\begin{Lemma}\label{L_clover_relations}
Let $\ch K=p>0$ and a tuple $\Xi$ be fixed. Consider the clover restricted Lie algebra $\TT(\Xi)=\Lie_p(v_0,w_0,u_0)$. Then
\begin{enumerate}
\item
$v_i^{p^{S_{i}}} = y_{i}^{(p^{R_{i}}-1)} v_{i+1}$,\qquad\
$w_i^{p^{R_{i}}} = x_{i}^{(p^{S_{i}}-1)} w_{i+1}$,\qquad\
$u_i^{p^{R_{i}}} = x_{i}^{(p^{S_{i}}-1)} u_{i+1}$,\ for all $i\ge 0$.
\item
 $ [ w_i^{p^{R_i}-1},v_i^{p^{S_i}}]=v_{i+1}$,\qquad
 $ [ v_i^{p^{S_i}-1},w_i^{p^{R_i}}]=w_{i+1}$,\qquad
 $ [ v_i^{p^{S_i}-1},u_i^{p^{R_i}}]=u_{i+1}$,\ for all $i\ge 0$.
\item $v_i,w_i,u_i\in \TT(\Xi)$, $i\ge 0$.
\end{enumerate}
\end{Lemma}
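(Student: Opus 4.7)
My plan is to handle the three parts in order. Part~(i) is immediate from Lemma~\ref{Lrelations} by specializing $m=S_i$ for $v_i$ and $m=R_i$ for $w_i,u_i$: the divided-power structure forces $\partial_{x_i}^{p^{S_i}}=\partial_{y_i}^{p^{R_i}}=\partial_{z_i}^{p^{R_i}}=0$ on $R(\Xi)$, while the surviving coefficient collapses to $x_i^{(0)}=1$ (resp.\ $y_i^{(0)}=1$, $z_i^{(0)}=1$), and the three stated identities drop out.

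For part~(ii), I focus on the first identity $[w_i^{p^{R_i}-1},v_i^{p^{S_i}}]=v_{i+1}$; the other two follow by the same method, now using $\ad v_i$ in place of $\ad w_i$ (the $w$- and $u$-identities are structurally identical because, by~(i), $w_i^{p^{R_i}}$ and $u_i^{p^{R_i}}$ both equal $x_i^{(p^{S_i}-1)}(\cdot)$). Using~(i), $v_i^{p^{S_i}}=y_i^{(p^{R_i}-1)}v_{i+1}$. I claim, by induction on $0\le m\le p^{R_i}-1$, that
\[
(\ad w_i)^m(v_i^{p^{S_i}})=y_i^{(p^{R_i}-1-m)}v_{i+1}.
\]
The base $m=0$ is~(i). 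For the step, apply the Leibniz rule for the derivation $\ad w_i$ of the associative algebra $\End R(\Xi)$ to the product $y_i^{(p^{R_i}-1-m)}\cdot v_{i+1}$. The diagonal piece $w_i\bigl(y_i^{(p^{R_i}-1-m)}\bigr)v_{i+1}$ reduces to $y_i^{(p^{R_i}-2-m)}v_{i+1}$ because the second summand of $w_i$ annihilates every divided power in $y_i$ (as $w_{i+1}$ does not involve the $y_i$-variable). The cross piece $y_i^{(p^{R_i}-1-m)}[w_i,v_{i+1}]$ uses the identity $[w_i,v_{i+1}]=y_i^{(p^{R_i}-1)}x_i^{(p^{S_i}-1)}[w_{i+1},v_{i+1}]$ (the $\partial_{y_i}$-summand of $w_i$ commutes with $v_{i+1}$, and $v_{i+1}$ kills the coefficient of the other summand); then $y_i^{(p^{R_i}-1-m)}\cdot y_i^{(p^{R_i}-1)}$ is a scalar multiple of $y_i^{(2p^{R_i}-2-m)}$, whose upper index is $\ge p^{R_i}$ for every $m\le p^{R_i}-2$, so the cross piece vanishes. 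Setting $m=p^{R_i}-1$ gives $(\ad w_i)^{p^{R_i}-1}(v_i^{p^{S_i}})=y_i^{(0)}v_{i+1}=v_{i+1}$, and the parallel inductions $(\ad v_i)^m(w_i^{p^{R_i}})=x_i^{(p^{S_i}-1-m)}w_{i+1}$ and $(\ad v_i)^m(u_i^{p^{R_i}})=x_i^{(p^{S_i}-1-m)}u_{i+1}$ settle the remaining two identities.

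For part~(iii), I induct on $i$: the case $i=0$ is the definition of $\TT(\Xi)$, and assuming $v_i,w_i,u_i\in\TT(\Xi)$, closure under the restricted $p$-map gives $v_i^{p^{S_i}}, w_i^{p^{R_i}}, u_i^{p^{R_i}}\in\TT(\Xi)$; then part~(ii) exhibits $v_{i+1},w_{i+1},u_{i+1}$ as iterated Lie commutators of these elements with $v_i$ or $w_i$. The one genuine obstacle is the divided-power bookkeeping in the cross term of~(ii); the pivots in~\eqref{pivot-3} are calibrated precisely so that pairing the ``higher'' divided powers $y_i^{(p^{R_i}-1)}$ (resp.\ $x_i^{(p^{S_i}-1)}$) baked into $w_i,u_i$ (resp.\ $v_i$) with the residual $y_i^{(\cdot)}$ (resp.\ $x_i^{(\cdot)}$) surviving from $\partial_{y_i}$ (resp.\ $\partial_{x_i}$) produces a divided-power exponent $\ge p^{R_i}$ (resp.\ $\ge p^{S_i}$), which is zero in $R(\Xi)$. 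Once this cancellation is in hand the whole lemma is mechanical.
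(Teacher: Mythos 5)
Your proposal is correct and follows essentially the same route as the paper: part~(i) by specializing Lemma~\ref{Lrelations}, part~(ii) by iterating $\ad w_i$ (resp.\ $\ad v_i$) against the element from~(i) and using the divided-power cancellation $y_i^{(p^{R_i}-1-m)}y_i^{(p^{R_i}-1)}=0$ for $m\le p^{R_i}-2$, and part~(iii) deduced from~(ii). You spell out the descending-power induction that the paper compresses into a single displayed line, but the underlying computation is identical.
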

\begin{proof}
Follows from computations of~\cite{Pe20flies}. But let us check the formulas directly.
The first claim is a partial case of Lemma~\ref{Lrelations}.
The third claim follows from the second.
Finally, let us check the second claim.
\begin{align*}
&[ w_i^{p^{R_i}-1},v_i^{p^{S_i}}]
=(\ad w_i)^{p^{R_i}-2}[w_i,v_i^{p^{S_i}}]\\
&\quad =(\ad w_i)^{p^{R_i}-2}
\Big[\dd_{y_i}+y_{i}^{(p^{R_{i}}-1)} x_{i}^{(p^{S_{i}}-1)} w_{i+1}, y_{i}^{(p^{R_{i}}-1)} v_{i+1} \Big]=v_{i+1}.
\qedhere
\end{align*}
\end{proof}

\begin{Lemma}
The subalgebra of $\TT(\Xi)$ generated by $v_0,w_0$ is isomorphic to $\LL(\Xi)$ defined by~\eqref{aibip}.
\end{Lemma}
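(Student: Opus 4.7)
My plan is to exhibit the isomorphism by identifying a common faithful module: realize both $\LL(\Xi)$ and $\Lie_p(v_0,w_0)$ as acting on essentially the same divided-power algebra. Concretely, let $R'\subset R(\Xi)$ denote the divided-power subalgebra generated by $\{x_i,y_i\mid i\ge 0\}$, i.e.\ the $K$-span of those basis monomials whose $z$-exponents all vanish, and let $R''\subset R(\Xi)$ be the subalgebra generated by $\{z_i\mid i\ge 0\}$, so that $R(\Xi)\cong R'\otimes_K R''$. There is a tautological divided-power isomorphism $\psi\colon \Omega(\Xi)\xrightarrow{\sim} R'$ matching corresponding basis monomials.

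\textbf{Key technical step.} Two simple observations drive the argument. First, by direct inspection of~\eqref{aibi0}, each $v_i$ and $w_i$ is a sum of differential operators involving only $\partial_{x_j},\partial_{y_j}$ and multiplication by divided powers of $x_j,y_j$ (for $j\ge i$). Consequently (a) $v_i,w_i$ preserve $R'$, and (b) $v_i,w_i$ annihilate every $z_j$ and are therefore $R''$-linear. Since the set of $R''$-linear derivations of $R(\Xi)$ is closed under sum, commutator, and $p$-th power (the $p$-th power of a derivation remains a derivation in characteristic $p$, and $R''$-linearity propagates through iterated composition), the whole restricted subalgebra $\Lie_p(v_0,w_0)$ consists of $R''$-linear derivations, each determined by its restriction to $R'$. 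Second, by commutativity of multiplication in divided-power rings, the factors $y_i^{(p^{R_i}-1)}x_i^{(p^{S_i}-1)}$ in~\eqref{pivot-3} and $x_i^{(p^{S_i}-1)}y_i^{(p^{R_i}-1)}$ in~\eqref{aibip} coincide, so under $\psi$ the recursion defining $(v_i,w_i)$ on $R'$ becomes formally the recursion defining $(a_i,b_i)$. A straightforward induction on $i$ yields
$\psi^{-1}\circ(v_i|_{R'})\circ\psi=a_i$ and $\psi^{-1}\circ(w_i|_{R'})\circ\psi=b_i$ in $\Der\Omega(\Xi)$.

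\textbf{Conclusion via free restricted Lie algebra.} Let $F=\Lie_p(A,B)$ be the free restricted Lie algebra on two generators and consider the canonical surjections $\phi_{\LL}\colon F\twoheadrightarrow\LL(\Xi)$, sending $(A,B)\mapsto(a_0,b_0)$, and $\phi_{\TT}\colon F\twoheadrightarrow\Lie_p(v_0,w_0)$, sending $(A,B)\mapsto(v_0,w_0)$. For any $f\in F$, the previous step gives $\phi_{\TT}(f)|_{R'}=\psi\circ\phi_{\LL}(f)\circ\psi^{-1}$, so $\phi_{\LL}(f)=0$ iff $\phi_{\TT}(f)$ vanishes on $R'$, iff (by the uniqueness coming from $R''$-linearity) $\phi_{\TT}(f)=0$ on all of $R(\Xi)$. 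Thus $\ker\phi_{\LL}=\ker\phi_{\TT}$, and the desired isomorphism $\LL(\Xi)\xrightarrow{\sim}\Lie_p(v_0,w_0)$, $a_0\mapsto v_0,\;b_0\mapsto w_0$, is induced on the quotients.

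\textbf{Main obstacle.} There is no deep obstacle; the argument is essentially a bookkeeping verification. The two features that make it work—and whose analogues would fail for the whole triple $(v_0,w_0,u_0)$—are (a) that $v_0,w_0$ do not involve the $z$-variables at all, and (b) that the divided-power factor $y_i^{(p^{R_i}-1)}x_i^{(p^{S_i}-1)}$ appearing in $w_i$ agrees (after reordering) with the factor $x_i^{(p^{S_i}-1)}y_i^{(p^{R_i}-1)}$ of $b_i$. The only slightly delicate point is the closure of $R''$-linear derivations under the restricted $p$-mapping, which is immediate from the fact that $D^p$ is a derivation in characteristic $p$ and $D(R'')=0$ forces $D^p(R'')=0$.
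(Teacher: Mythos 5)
Your argument is correct and makes rigorous what the paper disposes of in a single sentence, namely that $v_0,w_0$ have the same presentation as $a_0,b_0$; you transport the recursion through the natural identification of $\Omega(\Xi)$ with the $\{x_i,y_i\}$-subalgebra $R'\subset R(\Xi)$ and use $R''$-linearity (together with the easy fact that $\Lie_p(v_0,w_0)$ preserves $R'$) to conclude that the two surjections from the free restricted Lie algebra have equal kernels. This is the same underlying observation, simply spelled out with more care about the fact that $\Omega(\Xi)$ and $R(\Xi)$ are different ambient divided-power algebras and that $w_i$ versus $b_i$ differ only by a reordering of commuting factors.
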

\begin{proof}
We observe that $v_0,w_0$ have the same presentation as $a_0,b_0\in \LL(\Xi)$.
\end{proof}

\subsection{Head elements of two types}
We construct a clear monomial basis for $\TT(\Xi)$ similar
to that for its subalgebra $\LL(\Xi)\cong \Lie_p(v_0,w_0)\subset \TT(\Xi)$ found in~\cite{Pe17}.
%In case of a clover algebra, a generation of a pivot element is also referred to as its {\it length}.
Consider products of two pivot elements of the same length~\eqref{pivot-3}:
\begin{equation} \label{abp}
\begin{split}
h_{i+1}:=[w_i,v_i] &=x_i^{(p^{S_i}-1)} y_i^{(p^{R_i}-2)} v_{i+1}
          -x_i^{(p^{S_i}-2)} y_i^{(p^{R_i}-1)} w_{i+1},\\
g_{i+1}:=[v_i,u_i]&=x_i^{(p^{S_i}-2)} z_i^{(p^{R_i}-1)} u_{i+1}, %%\label{gi1}
\\
[w_i,u_i]&=0,\qquad i\ge 0.
\end{split}
\end{equation}

%%%We have computations of~\cite{Pe17}  (alternatively, use Corollary~\ref{Cternary2}).
\begin{Lemma}[{\cite{Pe17}, Lemma~4.2}]\label{Lcomm}
For all $i\ge 0$ we have the following elements:
\begin{enumerate}
\item
for all $0\le\xi<p^{S_i}$, $0\le\eta<p^{R_i}$
(except the case $\xi=p^{S_i}-1$ and $\eta=p^{R_i}-1$) we get:
\begin{equation}\label{abp2}
h_{i+1}^{\xi,\eta}:=[v_i^{\xi},w_i^{\eta},h_{i+1}]
  =x_i^{(p^{S_i}-1-\xi)} y_i^{(p^{R_i}-2-\eta)} v_{i+1}
  -x_i^{(p^{S_i}-2-\xi)} y_i^{(p^{R_i}-1-\eta)} w_{i+1};
\end{equation}
\item The order of the multiplication above is not essential.
As partial cases, we get:
\begin{align*}
 &h_{i+1}^{0,0}=h_{i+1}=[w_i,v_i];\\
 &h_{i+1}^{p^{S_i}-1,\eta}=y_i^{(p^{R_i}-2-\eta)} v_{i+1},
  \text{ for } 0\le \eta\le  p^{R_i}-2; \text{ as a particular case: }\\
 &h_{i+1}^{p^{S_i}-1,p^{R_i}-2}=v_{i+1};\\
 &h_{i+1}^{\xi,p^{R_i}-1}=- x_i^{(p^{S_i}-2-\xi)} w_{i+1},
  \text{ for } 0\le \xi\le  p^{S_i}-2; \text{ as a particular case: }\\
 &h_{i+1}^{p^{S_i}-2,p^{R_i}-1}=-w_{i+1};
\end{align*}
\end{enumerate}
\end{Lemma}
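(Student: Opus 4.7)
The plan is to prove~\eqref{abp2} by a double induction on $(\xi, \eta)$, using the observation that inside the ``head subspace'' spanned by elements of the form $x_i^{(a)} y_i^{(b)} v_{i+1}$ and $x_i^{(a)} y_i^{(b)} w_{i+1}$ the adjoint actions $\ad v_i$ and $\ad w_i$ reduce essentially to the commuting partial derivations $\partial_{x_i}$ and $\partial_{y_i}$ on the coefficients. To make this precise, I would first establish a computational sub-lemma: writing $v_i = \partial_{x_i} + \tau_v$ with $\tau_v = x_i^{(p^{S_i}-1)} y_i^{(p^{R_i}-1)} v_{i+1}$, and analogously $w_i = \partial_{y_i} + \tau_w$, one expands any bracket $[v_i,\, x_i^{(a)} y_i^{(b)} \star]$ (with $\star \in \{v_{i+1}, w_{i+1}\}$) via the derivation-Leibniz formula $[fD, gE] = f\, D(g)\, E - g\, E(f)\, D + fg\, [D, E]$ in $\Der R(\Xi)$. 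Since $v_{i+1}, w_{i+1}$ annihilate every variable of generation $\le i$, the mixed terms $f D(g) E$ and $g E(f) D$ vanish whenever $g$ is a pure $x_i, y_i$-monomial, and the surviving piece $fg\,[D,E]$ is killed by the divided-power identity $x_i^{(c)} x_i^{(d)} = \binom{c+d}{c} x_i^{(c+d)} = 0$ once $c + d \ge p^{S_i}$ (and similarly for $y_i$), with the single exception of the configuration $a = b = 0$ with $\star = w_{i+1}$, in which a stray $-x_i^{(p^{S_i}-1)} y_i^{(p^{R_i}-1)} h_{i+2}$ emerges from $[v_{i+1}, w_{i+1}] = -h_{i+2}$. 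Outside that isolated configuration, the clean rules
\begin{align*}
[v_i,\, x_i^{(a)} y_i^{(b)} \star] &= x_i^{(a-1)} y_i^{(b)} \star, \\
[w_i,\, x_i^{(a)} y_i^{(b)} \star] &= x_i^{(a)} y_i^{(b-1)} \star,
\end{align*}
hold, with the convention $x_i^{(-1)} = y_i^{(-1)} = 0$.

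The base $\xi = \eta = 0$ of~\eqref{abp2} is the definition~\eqref{abp} of $h_{i+1}$. For the inductive step I check that $[v_i, h_{i+1}^{\xi, \eta}] = h_{i+1}^{\xi+1, \eta}$ and $[w_i, h_{i+1}^{\xi, \eta}] = h_{i+1}^{\xi, \eta+1}$ whenever the target index remains inside the admissible box $\{0 \le \xi < p^{S_i},\, 0 \le \eta < p^{R_i}\} \setminus \{(p^{S_i}-1, p^{R_i}-1)\}$; applying the clean rules to the two summands of $h_{i+1}^{\xi, \eta}$ lowers the appropriate $x_i$- or $y_i$-exponent in each and produces exactly the right-hand side of~\eqref{abp2} with the incremented index. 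The dangerous stray $h_{i+2}$-term could only be triggered by applying $\ad v_i$ at $(\xi, \eta) = (p^{S_i}-2, p^{R_i}-1)$ or $\ad w_i$ at $(p^{S_i}-1, p^{R_i}-2)$; but both moves would land on the excluded corner $(p^{S_i}-1, p^{R_i}-1)$, so the stray contribution is never activated inside the induction.

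For part (ii), the ``order not essential'' claim follows because the sub-lemma says $\ad v_i$ and $\ad w_i$ act on the head subspace as the commuting operations $\partial_{x_i}$ and $\partial_{y_i}$ on coefficients; hence every lattice path from $(0, 0)$ to $(\xi, \eta)$ inside the admissible box gives the same element. The partial-case formulas are read off from~\eqref{abp2} by substituting boundary values of $\xi$ or $\eta$ and noting that one of $x_i^{(-1)}$ or $y_i^{(-1)}$ kills the corresponding summand. The main difficulty is not conceptual but bookkeeping-heavy: one must carefully track that the tail contributions from $\tau_v, \tau_w$ genuinely vanish by divided-power degree reasons throughout the admissible range of $(\xi, \eta)$, so that the closed form~\eqref{abp2} is not corrupted by $h_{i+2}$-terms leaking in from the next generation.
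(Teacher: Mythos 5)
Your proposal is correct, and since the paper only cites \cite{Pe17}, Lemma~4.2 (observing that $v_i,w_i$ are renamings of $a_i,b_i$ there) without reproducing an argument, there is no in-paper proof to compare against; your reconstruction of the underlying divided-power bracket computation is sound. The key structural observations -- that $v_{i+1},w_{i+1}$ annihilate all generation-$i$ variables so the mixed Leibniz terms vanish, that the product $x_i^{(p^{S_i}-1)}x_i^{(a)}$ dies unless $a=0$ (and likewise for $y_i$), and that the single stray $h_{i+2}$-contribution from $[v_{i+1},w_{i+1}]$ is only triggered when the target index would be the excluded corner $(p^{S_i}-1,p^{R_i}-1)$ -- are exactly what makes the closed form~\eqref{abp2} hold. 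One small presentational point: since $h_{i+1}^{\xi,\eta}$ is \emph{defined} as $(\ad v_i)^\xi(\ad w_i)^\eta(h_{i+1})$ with $\ad v_i$ applied last, the identity $[v_i,h_{i+1}^{\xi,\eta}]=h_{i+1}^{\xi+1,\eta}$ is tautological while $[w_i,h_{i+1}^{\xi,\eta}]=h_{i+1}^{\xi,\eta+1}$ is not, so the cleanest sequencing is to first establish the closed form for the defining order by a double induction (inner on $\eta$, outer on $\xi$) using only the clean rules, and then read off part~(ii) as a corollary of the closed form, since every lattice path within the admissible box reproduces it. This is a reordering of what you wrote rather than a fix to a gap.
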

Thus, for all $i\ge 0$, we obtain elements~\eqref{abp2}, called {\em heads of first type of length} $i+1$:
\begin{equation}\label{heads}
\Big\{h_{i+1}^{\xi,\eta}\ \Big|\ 0\le\xi<p^{S_{i}},\ 0\le\eta<p^{R_{i}},
\text{ except } (\xi=p^{S_{i}}-1 \text{ and } \eta=p^{R_{i}}-1)\Big\}.
\end{equation}
%Also, $x_i,y_i$ are referred to as {\em neck letters} in~\eqref{abp2}.
Consider~\eqref{heads} as a table of size $p^{S_i}\times p^{R_i}$, rows and columns being indexed by $\xi$, $\eta$,
the lower right corner is empty.
The table contains $v_{i+1}$ and $-w_{i+1}$ in respective cells.

We multiply~\eqref{abp} by $v_i$, $u_i$ (the order is not essential) we get {\em heads of second type of length} $i+1$:
\begin{equation}\label{heads2}
\Big\{g_{i+1}^{\xi,\zeta}:=[v_i^{\xi},u_i^{\zeta},[v_i,u_i]]
=x_i^{(p^{S_i}-2-\xi)} z_i^{(p^{R_i}-1-\zeta)} u_{i+1} \Big|
\ 0\le\xi\le p^{S_{i}}-2,\ 0\le\zeta\le p^{R_{i}}-1 \Big\}.
\end{equation}
%Also, $x_i,z_i$ are referred to as {\em neck letters}.
We put~\eqref{heads2} in table of size $(p^{S_i}-1)\times p^{R_i}$,
rows and columns being indexed by $\xi$ and $\zeta$, the low right corner containing $u_{i+1}$.

%% lixo vozmozhno ubrat
%We count cells in tables~\eqref{heads} and~\eqref{heads2}.
%\begin{Lemma}\label{Cnheads} Let $i\ge 0$. Then there are
%\begin{enumerate}
%\item $p^{S_{i}+R_{i}}-1$ heads of first type of length $i+1$,
%\item $(p^{S_{i}}-1)p^{R_{i}}$ heads of second type of length $i+1$.
%\end{enumerate}
%\end{Lemma}

\subsection{Monomial basis of clover restricted Lie algebras $\TT(\Xi)$}
Define {\it tails} of first and second types:
\begin{equation}\label{rmmp}
\begin{split}
r_n(x,y)&=x_0^{(\xi_0)}y_0^{(\eta_0)}\!\!\cdots x_n^{(\xi_n)}y_n^{(\eta_n)} \in R,\qquad\qquad\qquad
0\le\xi_i<p^{S_i},\ 0\le \eta_i<p^{R_i};\ n\ge 0;\\
r_n(x,y,z)&=x_0^{(\xi_0)}y_0^{(\eta_0)}z_0^{(\zeta_0)}\!\!\cdots x_n^{(\xi_n)}y_n^{(\eta_n)}z_n^{(\zeta_n)} \in R,\qquad
0\le\xi_i<p^{S_i},\ 0\le \eta_i,\zeta_i<p^{R_i};\ n\ge 0.
\end{split}
\end{equation}
For $n<0$ we assume that $r_n=1$.
The notation $r_n(x,y)$ denotes a particular element of type~\eqref{rmmp}.
If appears another element of such type, it
will be denoted as $r_n'(x,y)$, $\tilde r_n(x,y,z)$,  $r_n^{(k)}(*)$, etc.,
where the lower index denotes the biggest index of variables, whose types are given in parenthesis.

Define {\em standard monomials of first type} of length $n$, $n\ge 1$:
\begin{equation}
\label{rmmp3}
\begin{split}
 &r_{n-2}(x,y)h_{n}^{\xi_{n-1},\eta_{n-1}}\\
 &=r_{n-2}(x,y)\Big(x_{n-1}^{(p^{S_{n-1}}-1-\xi_{n-1})} y_{n-1}^{(p^{R_{n-1}}-2-\eta_{n-1})} v_{n}
  -x_{n-1}^{(p^{S_{n-1}}-2-\xi_{n-1})} y_{n-1}^{(p^{R_{n-1}}-1-\eta_{n-1})} w_{n} \Big),\\
 &\quad \text{where }
 0\le\xi_{n-1}<p^{S_{n-1}},\ 0\le \eta_{n-1}<p^{R_{n-1}},
\quad \text{except } (\xi_{n-1}{=}p^{S_{n-1}}{-}1\text{ and }\eta_{n-1}{=}p^{R_{n-1}}{-}1).
\end{split}
\end{equation}
Recall that the {\em heads} $h_{n}^{\xi_{n-1},\eta_{n-1}}$
are described by~\eqref{abp2}, while
the {\em tails} $r_{n-2}(x,y)$ are~\eqref{rmmp}.
We call $x_{n-1},y_{n-1}$ {\it neck letters}.
By Lemma~\ref{Lcomm}, we get the pivot elements $v_n,w_n$, for $n\ge 1$, as particular cases of such monomials.
So, we consider that $v_0,w_0$ are also the standard monomials of first type of length 0.

Define {\em standard monomials of second type} of length $n$, $n\ge 1$:
\begin{equation}
\label{rmmp3B}
\begin{split}
r_{n-2}(x,y,z)g_{n}^{\xi_{n-1},\zeta_{n-1}}=r_{n-2}(x,y,z)
x_{n-1}^{(p^{S_{n-1}}-2-\xi_{n-1})} z_{n-1}^{(p^{R_{n-1}}-1-\zeta_{n-1})} u_{n},\\
\text{ where }\quad
0 \le\xi_{n-1}\le p^{S_{n-1}}{-}2,\quad 0\le \zeta_{n-1}\le p^{R_{n-1}}{-}1.
\end{split}
\end{equation}
Recall that the {\em heads} $g_{n}^{\xi_{n-1},\zeta_{n-1}}$
are described by~\eqref{heads2}, while
the {\em tails} $r_{n-2}(x,y,z)$ are~\eqref{rmmp}.
We call $x_{n-1},z_{n-1}$ {\it neck letters}.
By definition, consider that $u_0$ is a standard monomial of second type of length $0$.

\begin{Theorem}\label{Tstmonom-p}
A basis of the Lie algebra $L=\Lie(v_0,w_0,u_0)$ (i.e. we use only the Lie bracket) is given by
\begin{enumerate}
\item the standard monomials of first type of length $n\ge 0$;
\item the standard monomials of second type of length $n\ge 0$.
\end{enumerate}
\end{Theorem}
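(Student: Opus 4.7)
The proof naturally splits into three tasks: linear independence of the listed standard monomials in $\Der R(\Xi)$, their membership in $L$, and closure of their $K$-span under the Lie bracket. Linear independence is immediate: each standard monomial of length $n \ge 1$ is a divided-power monomial in variables of indices $< n$ times one of the pivots $v_n$, $w_n$, or $u_n$; substituting the expansions~\eqref{aibi0} shows that each monomial has a unique "leading" derivation summand in $\Der R(\Xi)$, so distinct monomials are linearly independent.

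Membership follows by induction on $n$. For $n = 0$ the generators $v_0, w_0, u_0$ are in $L$ by definition. For $n \ge 1$, Lemma~\ref{Lcomm} presents every head $h_n^{\xi_{n-1},\eta_{n-1}}$ as the \emph{ordinary} iterated Lie bracket $(\ad v_{n-1})^{\xi_{n-1}}(\ad w_{n-1})^{\eta_{n-1}}[w_{n-1}, v_{n-1}]$, with an analogous expression for $g_n^{\xi_{n-1},\zeta_{n-1}}$ in $v_{n-1}, u_{n-1}$. Specialising these formulas recovers the pivots themselves: $v_n, w_n \in \Lie(v_{n-1}, w_{n-1})$ and $u_n \in \Lie(v_{n-1}, u_{n-1})$, so by induction all pivots live in $L$. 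To produce the tail factor $r_{n-2}$, I exploit that for $j \le n-2$ the adjoint $\ad v_j$, applied to an element supported on variables of indices $\ge j+1$, reduces via the Leibniz identity $[fD_1, D_2] = f[D_1, D_2] - D_2(f)D_1$ to multiplication by $x_j^{(p^{S_j}-1)} y_j^{(p^{R_j}-1)}$ composed with $\ad v_{j+1}$, and analogously for $\ad w_j$, $\ad u_j$; iterating this "pumping" down from level $n-2$ to level $0$ with suitable powers of these adjoints yields every required divided-power factor in the tail as an iterated Lie bracket.

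For closure, let $V$ be the span of the standard monomials. Since $v_0, w_0, u_0 \in V$, it suffices to show $[v_0, V], [w_0, V], [u_0, V] \subseteq V$. Given $M \in V$ of length $n$ and the decomposition $v_0 = \partial_{x_0} + x_0^{(p^{S_0}-1)} y_0^{(p^{R_0}-1)} v_1$, the derivation piece $\partial_{x_0}$ lowers the $x_0$-divided-power of the tail of $M$ by $1$ and yields a standard monomial or zero. The second piece contributes only when the $x_0$-exponent of the tail is zero; in that case the Leibniz identity together with the disjoint-variable supports of $v_1$ and the tail factors reduces the bracket to a linear combination of standard monomials. Parallel analyses handle $\ad w_0$ and $\ad u_0$. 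The principal obstacle is the deliberate asymmetry of the clover construction: because there is no $\Z_3$-cyclic symmetry, the mixed brackets $[u_0, M]$ on first-type $M$ and $[w_0, M]$ on second-type $M$ must be analyzed separately, and it is precisely the skew pairing of $z_i$ with $x_i$ in~\eqref{pivot-3}, together with the cancellation $[w_i, u_i] = 0$ from~\eqref{abp}, that ensures these cross terms land cleanly in $V$. The overall structure parallels the duplex case treated in~\cite{Pe17}, extended to accommodate the third generator $u_0$ and the second-type monomials.
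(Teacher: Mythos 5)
Your argument is broadly parallel to the paper's, and the membership step (induction on length, heads via Lemma~\ref{Lcomm}, tails via a Leibniz "pumping" with $\ad v_j, \ad w_j, \ad u_j$) is essentially the same. The genuine structural difference is in the span step: the paper establishes full closure $[V,V]\subseteq V$ by explicit case analysis on pairs of monomial types (second$\times$second, second$\times$first, with first$\times$first inherited from~\cite{Pe17}), whereas you reduce to stability of $V$ under $\ad v_0,\ad w_0,\ad u_0$ alone. That is a valid and slightly more economical route for Theorem~\ref{Tstmonom-p}, since right-normed bracketing by generators spans $L$; but note the paper's stronger computation is not wasted — it is precisely what yields the ideal property of the second-type span $J$ used in Theorem~\ref{Tsemidirect}, so your shortcut postpones rather than eliminates that work. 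You also supply an explicit linear-independence argument, which the paper leaves implicit; that is a reasonable addition, though your phrase "times one of the pivots $v_n,w_n,u_n$" is inaccurate for first-type monomials, which are a dp factor times a \emph{difference} $\alpha v_n-\beta w_n$ (the leading-derivation argument still works, but needs to be stated for the $\partial_{x_n}$-component and fall back to the $\partial_{y_n}$-component when the former vanishes). A second imprecision: the two summands of $v_0=\partial_{x_0}+x_0^{(p^{S_0}-1)}y_0^{(p^{R_0}-1)}v_1$ do \emph{not} individually send $V$ to $V$ — for instance $[\partial_{x_0},v_0]=x_0^{(p^{S_0}-2)}y_0^{(p^{R_0}-1)}v_1$ is not a standard monomial, and only the cancellation in the full bracket $[v_0,v_0]=0$ brings it back into $V$ — so the piece-by-piece phrasing of the closure step should be replaced by an analysis of the full brackets $[v_0,M]$, $[w_0,M]$, $[u_0,M]$, as in fact the paper does (at the level of whole monomials rather than pieces of the pivot decomposition).
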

\begin{proof}
The standard monomials of first type  form a basis of $\Lie(v_0,w_0)$~\cite[Theorem 5.1]{Pe17}.
Let us prove that the standard monomials of second type belong to $L$.
We proceed by induction on length $n$. We have $u_0\in L$.
By~\eqref{heads2}, we get
$g_{1}^{\xi,\zeta}=x_0^{(p^{S_0}-2-\xi)} z_0^{(p^{R_0}-1-\zeta)} u_{1}=[v_0^{\xi},u_0^{\zeta},[v_0,u_0]]\in R$,
for all $0\le\xi\le p^{S_{0}}-2$, $0\le\zeta\le p^{R_{0}}-1$.
Thus, we have the base of induction for $n=0,1$. Now let $n\ge 1$.
By induction hypothesis, $r_{n-2}(x,y,z)z_{n-1}^{(p^{R_{n-1}}-1)}u_{n}\in  L$.
We use recurrence formula for $v_{n-1}$ and~\eqref{abp}:
\begin{align*}
&[v_{n-1}, r_{n-2}(x,y,z)z_{n-1}^{(p^{R_{n-1}}-1)}u_{n}]
=\Big[\dd_{x_{n-1}}+x_{n-1}^{(p^{S_{n-1}}-1)}y_{n-1}^{(p^{R_{n-1}}-1)}v_{n}, r_{n-2}(x,y,z)z_{n-1}^{(p^{R_{n-1}}-1)}u_{n}\Big]\\
&\qquad\qquad=r_{n-2}(x,y,z)x_{n-1}^{(p^{S_{n-1}}-1)}y_{n-1}^{(p^{R_{n-1}}-1)}z_{n-1}^{(p^{R_{n-1}}-1)}[v_{n},u_{n}]\\
&\qquad\qquad=r_{n-2}(x,y,z)x_{n-1}^{(p^{S_{n-1}}-1)}y_{n-1}^{(p^{R_{n-1}}-1)}z_{n-1}^{(p^{R_{n-1}}-1)}
\cdot x_{n}^{(p^{S_{n}}-2)} z_{n}^{(p^{R_{n}}-1)} u_{n+1}.
\end{align*}
By assumption, $r_{n-2}(x,y,z)$ can have arbitrary powers of its variables.
Multiplying by $v_n=\dd_{x_n}+x_{n}^{(p^{S_{n}}-1)} y_{n}^{(p^{R_{n}}-1)} v_{n+1} $,
we can reduce the power of $x_n$ above to any desired value. The same argument applies to the remaining variables.
Thus, all standard monomials of second type belong to $L$.

It remains to show that products of standard monomials are expressed via standard monomials.
This is true for monomials of first type because they form a basis of $\Lie(v_0,w_0)$~\cite{Pe17}.
Take two standard monomials of second type, shortly written as
$d_1=r_{n-1}u_n$, and $d_2=r_{m-1}u_m$,
divided variables will be shortly written as $x_i^{*}$.
Assume that $n\le m$. Using recurrence presentation, we get
\begin{align*}
[d_1,d_2]&=\big[r_{n-1}(\dd_{z_n}+x_{n}^{*} z_{n}^{*} (\dd_{z_{n+1}}
+\cdots+ x_{m-2}^{*} z_{m-2}^{*}(\dd_{z_{m-1}}+x_{m-1}^{*}z_{m-1}^{*}u_m))),r_{m-1}u_m\big ] \\
&=r_{n-1}\dd_{z_n}(r_{m-1})u_m+r'_{n}\dd_{z_{n+1}}(r_{m-1})u_m+\cdots+r''_{m-2}\dd_{z_{m-1}}(r_{m-1})u_m.
\end{align*}
Observe that we get standard monomials of second type above.

Consider products of monomials of different types.
Write shortly $d_1=r_{n-1}(x,y,z)u_n$,
$d_2=r_{m-2}(x,y)h_{m}^{\xi,\eta}=r'_{m-1}(x,y)v_m-{r}''_{m-1}(x,y)w_m$. %%i.e. we do not need to specify the tails above.
Consider the case $n\le m$.
Using recurrence presentation and~\eqref{abp},
\begin{align*}
[d_1,d_2]&=\Big[r_{n-1}(x,y,z)\big(\dd_{z_n}+x_{n}^{*} z_{n}^{*} (\dd_{z_{n+1}} +\cdots \\[-3pt]
&\qquad + x_{m-2}^{*} z_{m-2}^{*}(\dd_{z_{m-1}}+x_{m-1}^{*}z_{m-1}^{*}u_m))\big),
r'_{m-1}(x,y)v_m-r''_{m-1}(x,y)w_m\Big ] \\
&=\bar r'_{m-1}(x,y,z)[u_m,v_m]-\bar r''_{m-1}(x,y,z)[u_m,w_m]
=-\bar r'_{m-1}(x,y,z)x_m^{(p^{S_m}-2)} z_m^{(p^{R_m}-1)} u_{m+1},
\end{align*}
yielding  a standard monomial of second type. Consider the case $m<n$. Then
\begin{align*}
d_2&=\sum_{j=m}^{n-1}\Big(r^{(j)}_{j-1}(x,y)\dd_{x_j}- \bar r^{(j)}_{j-1}(x,y)\dd_{y_j}\Big)
+r'_{n-1}(x,y)v_n- r''_{n-1}(x,y)w_n;\\
[d_2,d_1]&=\sum_{j=m}^{n-1}\Big(r^{(j)}_{j-1}(x,y)\dd_{x_j}- \bar r^{(j)}_{j-1}(x,y)\dd_{y_j}\Big)
\Big(r_{n-1}(x,y,z)\Big)u_n +\tilde r'_{n-1}(x,y,z)[v_n,u_n],
\end{align*}
the last term yielding a standard monomial of second type by~\eqref{abp}.
In the preceding sum, the action on variables with indices $n-1$ appears in case $j=n-1$.
Recall that
$d_1=r_{n-1}(x,y,z)u_n=r_{n-2}(x,y,z)x_{n-1}^\a z_{n-1}^\g u_n$, where $0\le\a<p^{S_{n-1}}-1$, $0\le\g<p^{R_{n-1}}$.
After the action in the sum above, we again get  standard monomials of second type.
\end{proof}
By Lemma~\ref{Lrelations},
\begin{equation}\label{powers}
\begin{split}
v_n^{p^i}&=
\begin{cases}
\dd_{x_n}^{p^i}+ x_n^{(p^{S_n}-p^i)} y_n^{(p^{R_n}-1)}v_{n+1}, &\quad  1\le i< S_n;\\
\hfill y_n^{(p^{R_n}-1)} v_{n+1}, &\hfill i=S_n,
\end{cases}\\
w_n^{p^i}&=
\begin{cases}
\dd_{y_n}^{p^i}+ y_n^{(p^{R_n}-p^i)} x_n^{(p^{S_n}-1)}w_{n+1}, &\quad  1\le i< R_n;\\
\hfill x_n^{(p^{S_n}-1)} w_{n+1}, &\hfill i=R_n.
\end{cases}\\
u_n^{p^i}&=
\begin{cases}
\dd_{z_n}^{p^i}+ z_n^{(p^{R_n}-p^i)} x_n^{(p^{S_n}-1)}u_{n+1}, &\quad  1\le i< R_n;\\
\hfill x_n^{(p^{S_n}-1)} u_{n+1}, &\hfill i=R_n.
\end{cases}
\end{split}
\end{equation}
%%the next power is trivial,  similar formulas one has for $w_i,u_i$.
We refer to nonzero powers of $v_n,w_n$ as {\it power standard monomials of first type} of length $n+1$,
powers of $u_n$ are {\it power standard monomials of second type}.
One checks that they are linearly independent with the standard monomials.

\begin{Lemma}\label{Lnum_power}
Let $n\ge 0$. There are $S_n+R_n$ power standard monomials of first type and
$R_n$ power standard monomials of second type of length $n+1$.
\end{Lemma}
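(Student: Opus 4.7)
The plan is to read the count directly from equation~\eqref{powers} and then argue that nothing is missing, i.e.\ that every $p^i$-power of $v_n$, $w_n$, $u_n$ with $i$ above the listed range vanishes. Together with the linear independence stated just before the lemma, this pins down the exact count: there are $S_n$ nonzero powers of $v_n$ plus $R_n$ nonzero powers of $w_n$ contributing to first type, and $R_n$ nonzero powers of $u_n$ contributing to second type.

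First I would verify that the formulas~\eqref{powers} actually give $S_n$ (resp.\ $R_n$, $R_n$) nonzero operators. For $1\le i<S_n$, the expression for $v_n^{p^i}$ contains the nonzero leading derivation $\partial_{x_n}^{p^i}$ as its principal term, while the ``tail summand'' $x_n^{(p^{S_n}-p^i)}y_n^{(p^{R_n}-1)}v_{n+1}$ differentiates variables with index $\ge n+1$, so no cancellation occurs. At the boundary $i=S_n$ the operator collapses to $y_n^{(p^{R_n}-1)}v_{n+1}$, which is nonzero because $v_{n+1}$ is a nontrivial derivation and $y_n^{(p^{R_n}-1)}\ne 0$ in $R(\Xi)$. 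The arguments for $w_n^{p^i}$ ($1\le i\le R_n$) and $u_n^{p^i}$ ($1\le i\le R_n$) are identical. The resulting $S_n+R_n$ (resp.\ $R_n$) operators are pairwise distinct since their principal differential parts, or the distinct pivot elements $v_{n+1}, w_{n+1}, u_{n+1}$ they involve at the top, force linear independence.

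The main, though still routine, step is to show the list is complete, i.e.\ $v_n^{p^{S_n+1}}=0$, $w_n^{p^{R_n+1}}=0$, $u_n^{p^{R_n+1}}=0$. Consider $v_n$. Since $v_{n+1}$ differentiates only variables of index $\ge n+1$, it commutes, as an element of $\End R(\Xi)$, with multiplication by the scalar $y_n^{(p^{R_n}-1)}\in R(\Xi)$, so
\[
v_n^{p^{S_n+1}}=\bigl(y_n^{(p^{R_n}-1)}v_{n+1}\bigr)^p
 =\bigl(y_n^{(p^{R_n}-1)}\bigr)^p\,v_{n+1}^p.
\]
In the truncated divided power algebra the product $(y_n^{(p^{R_n}-1)})^p$ is a scalar multiple of $y_n^{(p(p^{R_n}-1))}$, and $p(p^{R_n}-1)\ge p^{R_n}$ for $R_n\ge 1$, hence this divided power is zero. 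The same argument, with $x_n^{(p^{S_n}-1)}$ in place of $y_n^{(p^{R_n}-1)}$, handles $w_n$ and $u_n$. All higher $p$-powers then vanish automatically, so~\eqref{powers} exhausts the nonzero powers and the counts $S_n+R_n$ and $R_n$ follow.

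The only mildly delicate point is the commutation used in the previous paragraph: one needs to check, looking at the recursive definitions~\eqref{aibi0}, that $v_{n+1}$ (and, for the other two cases, $w_{n+1}$, $u_{n+1}$) genuinely involves no variable with index $\le n$, which is immediate from the formulas. Everything else is bookkeeping against~\eqref{powers}.
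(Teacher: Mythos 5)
The paper states Lemma~\ref{Lnum_power} without a proof: it is presented as an immediate count read off from the display~\eqref{powers}, which itself follows from Lemma~\ref{Lrelations}. Your write-up supplies exactly the verification the paper leaves implicit, and it is correct. You read the $S_n$ nonzero $p^i$-powers of $v_n$, the $R_n$ of $w_n$ (first type), and the $R_n$ of $u_n$ (second type) directly from~\eqref{powers}; you check nonvanishing via the principal terms $\partial_{x_n}^{p^i}$ (resp.\ the boundary case with a nonzero divided power times a pivot element); and you check that the list is complete by computing $v_n^{p^{S_n+1}}=\bigl(y_n^{(p^{R_n}-1)}v_{n+1}\bigr)^p=\bigl(y_n^{(p^{R_n}-1)}\bigr)^p v_{n+1}^p=0$, using the commutation $[v_{n+1},\,y_n^{(p^{R_n}-1)}\cdot]=0$ (since $v_{n+1}$ touches only variables of index $\ge n+1$) and the truncation of divided powers. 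The pairwise-distinctness remark is also correct (different principal parts $\partial_{x_n}^{p^i}$, or different pivot tops $v_{n+1},w_{n+1},u_{n+1}$ in the boundary cases). In short: same approach the paper intends, with the routine checks actually carried out; no gaps.
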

\begin{Theorem}\label{Tsemidirect}
Let $\TT(\Xi)=\Lie_p(v_0,w_0,u_0)$ be the clover restricted Lie algebra. Then
\begin{enumerate}
\item
A basis of $\TT(\Xi)$ is given by the standard and power standard monomials of first and second types.
\item We have a semidirect product
$$\TT(\Xi)=\Lie_p(v_0,w_0)\rightthreetimes J ,\qquad \Lie_p(v_0,w_0)\cong \LL(\Xi),$$
the subalgebra $\Lie_p(v_0,w_0)$ is spanned by the standard and power standard monomials
of first type, the ideal $J$ is spanned by the standard and power standard monomials of second type.
\end{enumerate}
\end{Theorem}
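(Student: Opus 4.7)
The plan is to leverage Theorem~\ref{Tstmonom-p}, which already supplies a basis of the ordinary Lie algebra $L=\Lie(v_0,w_0,u_0)$, and enlarge it to accommodate the $p$-mapping. Let $M\subset\TT(\Xi)$ denote the $K$-span of all standard and power standard monomials of first and second types. First I would check that $M\subseteq\TT(\Xi)$: the ordinary standard monomials lie in $L$ by Theorem~\ref{Tstmonom-p}, while each power standard monomial is a $p^i$-th $p$-power of one of the pivots $v_n,w_n,u_n$, which belong to $\TT(\Xi)$ by Lemma~\ref{L_clover_relations}(iii) together with closure of $\TT(\Xi)$ under the $p$-mapping. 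For the reverse inclusion it suffices to show that $M$ is closed under both the Lie bracket and the $p$-mapping, since $M$ already contains the generators $v_0,w_0,u_0$.

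Bracket closure restricted to standard monomials is Theorem~\ref{Tstmonom-p}. For a bracket involving a power standard monomial one expands via \eqref{powers}: an element such as $v_n^{p^i}$ equals $\dd_{x_n}^{p^i}$ plus a divided-power multiple of $v_{n+1}$, so its bracket with any (power) standard monomial, computed inside $\Der R(\Xi)$, produces via Leibniz and the recursions \eqref{pivot-3} a sum of (power) standard monomials. For $p$-closure, by Jacobson's formula \eqref{power_P} it suffices to verify that $e^{[p]}\in M$ for each basis element $e$ of $M$, since the mixed $s_i$-terms are iterated commutators and hence lie in $M$ by the bracket closure just established. For the pivots $v_n,w_n,u_n$ this is exactly \eqref{powers}. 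For a non-pivot standard monomial of the form $fD$ with $D\in\{v_n,w_n,u_n\}$ and $f$ a proper divided-power factor in the neck variables, I would apply the Hochschild identity
\begin{equation*}
(fD)^{p}=f^{p}D^{p}+(fD)^{p-1}(f)\,D
\end{equation*}
and use a degree count in $R(\Xi)$: $f^{p}$ vanishes or reduces by the divided-power multiplication rules together with the top caps $p^{S_n},p^{R_n}$, while the term $(fD)^{p-1}(f)D$, being a pivot times an iterated derivation of $f$, collapses to a power standard monomial or zero. For a power standard monomial itself, iterating \eqref{powers} either produces the next power standard monomial or zero once the exponent exceeds the cap.

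For linear independence I would test the candidate basis by its action on $R(\Xi)$: the pure partials $\dd_{x_n}^{p^i},\dd_{y_n}^{p^i},\dd_{z_n}^{p^i}$ separate the power standard monomials of distinct exponents and separate them from the ordinary standard monomials, while among ordinary standard monomials the value on a well-chosen divided monomial detects the innermost pivot ($v_m$, $w_m$ or $u_m$) and reconstructs its tail and head. For the semidirect decomposition in (ii), let $J$ be the span of all (power) standard monomials of second type; I would verify $[\Lie_p(v_0,w_0),J]\subset J$ and $J^{[p]}\subset J$. The first inclusion is precisely the computation appearing in the proof of Theorem~\ref{Tstmonom-p} showing that the bracket of a first-type monomial with a second-type monomial yields another second-type monomial, using \eqref{abp} and the identity $[w_i,u_i]=0$. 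The second inclusion follows from the same Hochschild analysis applied to second-type monomials, which always end in $u_n$. The isomorphism $\Lie_p(v_0,w_0)\cong\LL(\Xi)$ is then immediate by comparing the recursion \eqref{pivot-3} for $v_i,w_i$ with the recursion \eqref{aibip} for $a_i,b_i$. The principal technical obstacle I anticipate is the $p$-closure step: one must rule out that any new type of monomial (for instance, a hybrid expression simultaneously involving $y$-variables and $z$-variables at top exponent) escapes the list when taking $p$-powers, which requires careful bookkeeping of the divided-power identities in the Hochschild formula.
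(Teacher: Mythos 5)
Your proposal follows essentially the same plan as the paper: show that the span $M$ of all (power) standard monomials lies in $\TT(\Xi)$, that $M$ is closed under the bracket and the $p$-mapping, and that the monomials are independent. Where the paper delegates the $p$-closure to a cited general statement (from~\cite{StrFar}: a basis of the $p$-hull of a Lie algebra with basis $B$ is obtained by adjoining the $p^m$-powers $b^{[p^m]}$, $b\in B$, $m\ge 1$) and then simply observes that only the pivots contribute new powers, you try to prove $p$-closure directly via Hochschild's identity $(fD)^p=f^pD^p+(fD)^{p-1}(f)\,D$. That is a reasonable route, but as written it has a concrete gap.

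The gap: first-type standard monomials are generically \emph{two-term} sums $e=f_1v_n-f_2w_n$ (see \eqref{rmmp3} and \eqref{abp2}; one term drops only in the degenerate cases $\xi=p^{S_{n-1}}-1$ or $\eta=p^{R_{n-1}}-1$). Hochschild's identity applies to a single summand $fD$, so your argument covers second-type monomials $f\,u_n$ and the one-term first-type degenerations, but not a generic first-type monomial. To repair it, you need a second application of Jacobson's formula~\eqref{power_P}, $e^{[p]}=(f_1v_n)^{[p]}+(-f_2w_n)^{[p]}+\sum_i s_i(f_1v_n,-f_2w_n)$, then kill the first two terms by $f_1^p=f_2^p=0$ (every nontrivial divided power satisfies $(t^{(k)})^p=\frac{(pk)!}{(k!)^p}t^{(pk)}=0$ since $v_p\bigl((pk)!/(k!)^p\bigr)=s_p(k)\ge1$ for $k\ge1$), and finally verify separately that the commutator terms $s_i(f_1v_n,-f_2w_n)$ land in $L$. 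This last step is not free, because $f_1v_n$ and $f_2w_n$ individually need not lie in $L$; one computes, e.g., $[f_1v_n,f_2w_n]=f_1f_2[v_n,w_n]=-f_1f_2\,h_{n+1}$ using $v_n(f_2)=w_n(f_1)=0$, and checks that iterated brackets of this shape are again (sums of) standard monomials. Two minor inaccuracies in the same passage: the term $(fD)^{p-1}(f)D$ does not ``collapse to a power standard monomial or zero'' --- it vanishes identically once $D(f)=0$, which always holds for the $f,D$ arising here; and $f^p=0$ is forced by the $p$-divisibility of the multinomial coefficient, not by the top caps $p^{S_n},p^{R_n}$.
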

\begin{proof}
To get a basis of the $p$-hull of a Lie algebra we need to add $p^m$-powers, $m\ge 1$, of its basis~\cite{StrFar}.
Observe that the standard monomials contain non-trivial tails except for the pivot elements.
Thus, to get a basis of $\Lie_p(v_0,w_0,u_0)$ we add nontrivial powers of the pivot elements, i.e. power standard monomials.
The first claim is proved.

By our specific construction, the pivot elements $\{v_i,w_i|i\ge 0\}$
(i.e., except $u_i$, $i\ge 0$) in~\eqref{pivot-3} are just renaming (in terms of another letters)
of the pivot elements $\{a_i,b_i|i\ge 0\}$~\eqref{aibip} of  Example~\ref{E2}, the latter yielding  the algebra $\LL(\Xi)$.
Their commutators and $p$th powers yield exactly the (power) standard monomials of first type,
which are just renaming of a basis of $\LL(\Xi)$, established in~\cite[Theorem~5.4]{Pe17}.
Thus, the (power) standard monomials of first type span the subalgebra $\Lie_p(v_0,w_0)\cong \LL(\Xi)$.

By computations in proof of Theorem~\ref{Tstmonom-p}, commutators of the standard monomial of second type
with the standard monomials of any type yield monomials of second type, proving that $J$ is an ideal indeed.
The semidirect decomposition of the second claim follows.
\end{proof}

\subsection{Periodic tuple and self-similarity}
The notion of self-similarity for Lie algebras was introduced by Bartholdi~\cite{Bartholdi15}.
A Lie algebra $L$ is called {\it self-similar} if it affords a homomorphism
$\psi : L \to  \Der R \rightthreetimes R \otimes L$,
where $R$ is a commutative algebra and $\Der R$ its Lie algebra of derivations.
A further study of the notion of self-similarity for Lie algebras was done in~\cite{FutKochSis}.
The notion of self-similarity in case of Poisson superalgebras and Jordan superalgebras
is considered in~\cite{PeSh18Jslow}.

Both Lie superalgebras of~\cite{Pe16} are self-similar.
The self-similarity of the two-generated subalgebra
$\LL(\Xi)\cong \Lie_p(v_0,w_0)\subset \TT(\Xi)$ in case of a periodic tuple $\Xi$ was observed in~\cite{Pe17}.
The same phenomena happens in our case as well.

\begin{Lemma}\label{Lself}
Let a tuple $\Xi=(S_i,R_i|i\ge 0)$ be constant or, more generally, periodic.
Then the clover restricted Lie algebra $\TT(\Xi)$ is self-similar.
\end{Lemma}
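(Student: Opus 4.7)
The plan is to make the self-similarity map explicit by iterating the defining recurrence~\eqref{pivot-3} exactly $T$ times, where $T$ is a period of $\Xi$, and then to fold the remaining tail back into a copy of $\TT(\Xi)$ via the periodicity isomorphism. I aim for a homomorphism $\psi:\TT(\Xi)\to \Der R'\rightthreetimes R'\otimes\TT(\Xi)$ for a suitable finite-dimensional commutative $R'$, acting on generators by
\begin{equation*}
\psi(v_0)=D_v+M_v\otimes v_0,\qquad \psi(w_0)=D_w+M_w\otimes w_0,\qquad \psi(u_0)=D_u+M_u\otimes u_0.
\end{equation*}

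First, I would split variables. Let $R'$ be the divided power algebra on the first $T$ layers $\{x_i,y_i,z_i\mid 0\le i<T\}$ (with parameters $S_i,R_i,R_i$), and $R''$ the divided power algebra on the remaining layers. Then $R(\Xi)\cong R'\otimes R''$, and the shift $x_i\mapsto x_{i+T}$, $y_i\mapsto y_{i+T}$, $z_i\mapsto z_{i+T}$, which is an algebra map by the periodicity assumption $S_{i+T}=S_i$, $R_{i+T}=R_i$, gives an isomorphism $\phi:R(\Xi)\xrightarrow{\sim}R''$, hence $\phi_*:\Der R(\Xi)\xrightarrow{\sim}\Der R''$ sending $v_0,w_0,u_0$ to $v_T,w_T,u_T$. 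By Lemma~\ref{L_clover_relations}(iii) this restricts to an isomorphism $\sigma:\TT(\Xi)\xrightarrow{\sim}\Lie_p(v_T,w_T,u_T)$.

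Next, I would unroll the recurrence $T$ times using~\eqref{aibi0}, cutting the chain off at level $T$ and grouping terms: for each pivot element one gets
\begin{equation*}
v_0=D_v+M_v\cdot v_T,\qquad w_0=D_w+M_w\cdot w_T,\qquad u_0=D_u+M_u\cdot u_T,
\end{equation*}
with $D_v,D_w,D_u\in\Der R'$ involving only $\dd_{x_i},\dd_{y_i},\dd_{z_i}$ for $0\le i<T$, and $M_v,M_w,M_u\in R'$ explicit monomials in the first $T$ layers. The natural inclusion $\Der R'\rightthreetimes (R'\otimes\Der R'')\hookrightarrow\Der(R'\otimes R'')$ therefore contains $v_0,w_0,u_0$ in the indicated decomposed form. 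Post-composing the second tensor factor with $\sigma^{-1}$ produces the candidate map $\psi$ into $\Der R'\rightthreetimes R'\otimes\TT(\Xi)$ defined on generators as above.

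The remaining and main technical step is to verify that $\psi$ extends to a restricted Lie algebra homomorphism on the whole of $\TT(\Xi)=\Lie_p(v_0,w_0,u_0)$. For this I would observe that $\Der R'\rightthreetimes R'\otimes\Der R''$ is a restricted Lie subalgebra of $\Der(R'\otimes R'')$: closure under the bracket is routine, and closure under the $p$-mapping follows from expanding $(D+\sum_j m_j\otimes E_j)^{[p]}$ using~\eqref{power_P} and noting that each summand again lies in the semidirect product (the derivations $D\in\Der R'\otimes 1$ and $R'\otimes\Der R''$ interact via brackets that stay in $R'\otimes\Der R''$, since $R'$ is commutative and its derivations act trivially on $R''$). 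Hence the subalgebra of $\Der R(\Xi)$ generated (as a restricted Lie algebra) by $v_0,w_0,u_0$ already lies inside $\Der R'\rightthreetimes R'\otimes\Lie_p(v_T,w_T,u_T)$, and applying $\mathrm{id}\otimes\sigma^{-1}$ yields the required $\psi$. The hardest part is simply this bookkeeping check that the two pieces of the semidirect product are preserved under the $p$-mapping; the analogous verification for the duplex subalgebra $\LL(\Xi)\cong\Lie_p(v_0,w_0)$ is already carried out in~\cite{Pe17}, and the addition of the third pivot $u_i$, whose recurrence has the same shape as that of $w_i$, adds no new difficulty.
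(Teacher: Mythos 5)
Your proposal is correct and follows essentially the same route as the paper: split off the first $T$ layers of divided variables, unroll the recurrence \eqref{aibi0} to write $v_0=d+p\,v_T$ (and likewise for $w_0,u_0$) with $d\in\Der R'$, $p\in R'$, and use periodicity to identify $\Lie_p(v_T,w_T,u_T)$ with $\TT(\Xi)$. The only difference is presentational: you spell out the shift isomorphism $\phi_*$ and the closure of $\Der R'\rightthreetimes R'\otimes\Der R''$ under the bracket and $p$-mapping, which the paper leaves implicit.
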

\begin{proof}
Let $N\in\N$ be the period of the tuple $\Xi$, namely $S_{i+N}=S_i$, $R_{i+N}=R_i$ for $i\ge 0$.
By construction of the pivot elements~\eqref{aibi0} and periodicity,
$$H:=\Lie_p(v_N,w_N,u_N)\cong \Lie_p(v_0,w_0,u_0)= \TT(\Xi) .$$
Consider the subalgebra of divided powers:
$$R_{N-1}:=\big\langle x_0^{(\a_0)}y_0^{(\b_0)}z_0^{(\gamma_0)}\!\!\cdots x_{N-1}^{(\a_{N-1})}y_{N-1}^{(\b_{N-1})}z_{N-1}^{(\gamma_{N-1})}
%\ \Big |\ 0\le\a_i<p^{S_i},\ 0\le \b_i,\gamma_i<p^{R_i},\ 0\le i<N
\big\rangle_K\subset R=R(\Xi),  $$
where bounds for divided powers are the same as in $R$ and determined by $\Xi$.
Using~\eqref{pivot-3} and~\eqref{aibi0}, we get
\begin{multline*}%\label{aibi0}
%%v_0 &= \partial_{x_0} + x_0^{(p^{S_0}-1)}y_0^{(p^{R_0}-1)}\Big(\partial_{x_{1}} + x_1^{(p^{S_1}-1)}y_1^{(p^{R_1}-1)}\Big(\partial_{x_{2}}+\dots\\
%%&\quad \ldots +
%%     x_{N-3}^{(p^{S_{N-3}}-1)} y_{N-3}^{(p^{R_{N-3}}-1)} \Big(\partial_{x_{N-2}}
%%       +x_{N-2}^{(p^{S_{N-2}}-1)} y_{N-2}^{(p^{R_{N-2}}-1)} \partial_{x_{N-1}}\Big){\cdots} \Big)\Big)\\
v_0 = \partial_{x_0} + x_0^{(p^{S_0}-1)}y_0^{(p^{R_0}-1)}\partial_{x_{1}}
       +x_0^{(p^{S_0}-1)}y_0^{(p^{R_0}-1)} x_1^{(p^{S_1}-1)}y_1^{(p^{R_1}-1)}\partial_{x_{2}}+\dots\\
\ldots +x_0^{(p^{S_0}-1)}y_0^{(p^{R_0}-1)}\cdots x_{N-2}^{(p^{S_{N-2}}-1)} y_{N-2}^{(p^{R_{N-2}}-1)} \partial_{x_{N-1}}\\
+ x_0^{(p^{S_0}-1)}y_0^{(p^{R_0}-1)}\cdots x_{N-1}^{(p^{S_{N-1}}-1)} y_{N-1}^{(p^{R_{N-1}}-1)} v_N\\
= d_{N-1}+ p_{N-1} v_N, \qquad\text{where}\quad  d_{N-1}\in\Der R_{N-1}, \quad p_{N-1}\in R_{N-1}.
\end{multline*}
Similar formulas are valid for $w_0$, $u_0$.
These formulas for the generators yield an embedding of self-similarity for the whole of the algebra
\begin{equation*}
\TT(\Xi) \hookrightarrow  \Der R_{N-1} \rightthreetimes R_{N-1} \otimes H, \qquad H=\Lie(v_N,w_N,u_N)\cong\TT(\Xi).
\qedhere
\end{equation*}
\end{proof}

%******************************************************************************************
\section{Weight function, $\Z^3$-grading, bounds on weights}

\subsection{Weights}
By {\it pure monomials} we call products of divided powers and pure derivations.
In particular, if a monomial contains one pure derivation, we get a {\it pure Lie monomial}. %% see subsection~\ref{SS_Liepure}.
Set
$\a_n=\cwt(\dd_{x_n})=-\cwt(x_n)\in\C$,
$\b_n=\cwt(\dd_{y_n})=-\cwt(y_n)\in\C$,
$\g_n=\cwt(\dd_{z_n})=-\cwt(z_n)\in\C$, for all $n\ge 0$.
This values are easily extended to a weight function on pure monomials, additive on their (Lie or associative) products.
Next, consider weight functions such that all terms in recurrence relation~\eqref{pivot-3} have the same weight,
thus, attaching the same value as a weight for the pivot element as well.
We get a recurrence relation:
\begin{equation} \label{matrix3}
\begin{pmatrix} \a_{n+1}\\ \b_{n+1}\\ \g_{n+1} \end{pmatrix}
=
\begin{pmatrix}
p^{S_{n  }}   & p^{R_{n  }}-1 & 0\\
p^{S_{n  }}-1 & p^{R_{n  }}   & 0\\
p^{S_{n  }}-1 & 0             &p^{R_{n}}
\end{pmatrix}
\begin{pmatrix} \a_{n}\\ \b_{n}\\ \g_{n} \end{pmatrix},
\qquad n\ge 0.
\end{equation}
Recurrence  relation~\eqref{matrix3} expresses weights of the pivot elements of length $n+1$
via weights of the pivot elements of length $n$.
Hence, any weight function satisfying~\eqref{matrix3} is determined by its values on the pivot  elements of zero length,
namely, by $\cwt(v_0), \cwt(w_0),\cwt(u_0)$.

Next, let  $\wt_i(*)$ be a weight function
which is equal to zero for all but $i$-th element in the list $\{v_0,w_0,u_0\}$, for $i=1,2,3$.
Compose the {\it multidegree weight function}
$\Gr(v):=(\wt_1(v),\wt_2(v),\wt_3(v))$, where $v$ is a pure monomial.
By definition, $\Gr(v_0)=(1,0,0)$, $\Gr(w_0)=(0,1,0)$, $\Gr(u_0)=(0,0,1)$.
Thus, the space of weight functions satisfying~\eqref{matrix3} is $3$-dimensional with a basis
$\wt_1(*),\wt_2(*),\wt_3(*)$.
Using~\eqref{matrix3}, we see that $\Gr(v_n)\in\NO^3$ for all $n\ge 0$.
Finally, define the total {\it degree weight function} $\wt(v):=\sum_{j=1}^3 \wt_j(v)$.
Its initial values are $\wt(v_0)=\wt(w_0)=\wt(u_0)=1$.

\begin{Lemma} \label{Lweight_pivo}
$\wt(v_n)=\wt(w_n)=\wt(u_n)=\prod\limits_{i=0}^{n-1}\big(p^{S_{i}}+ p^{R_{i}}-1\big)$, $n\ge 0$,
where $\wt(v_0)=\wt(w_0)=\wt(u_0)=1$.
\end{Lemma}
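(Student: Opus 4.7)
The plan is a direct induction on $n$, using the recurrence~\eqref{matrix3} applied to the total degree weight function $\wt = \wt_1+\wt_2+\wt_3$. The base case $n=0$ is immediate from the definition $\wt(v_0)=\wt(w_0)=\wt(u_0)=1$, with the convention that the empty product equals $1$.

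For the inductive step, I would note that since $\wt$ is a linear combination of the basis weight functions $\wt_1,\wt_2,\wt_3$, each of which satisfies the recurrence~\eqref{matrix3} (with the column vector $(\alpha_n,\beta_n,\gamma_n)^{\mathsf T}$ read off as $(\wt(v_n),\wt(w_n),\wt(u_n))^{\mathsf T}$), the total weight also satisfies the same system:
\begin{align*}
\wt(v_{n+1}) &= p^{S_n}\wt(v_n) + (p^{R_n}-1)\wt(w_n),\\
\wt(w_{n+1}) &= (p^{S_n}-1)\wt(v_n) + p^{R_n}\wt(w_n),\\
\wt(u_{n+1}) &= (p^{S_n}-1)\wt(v_n) + p^{R_n}\wt(u_n).
\end{align*}
Assuming $\wt(v_n)=\wt(w_n)=\wt(u_n)=W_n$ by the inductive hypothesis, each right-hand side collapses to $(p^{S_n}+p^{R_n}-1)W_n$, so the three values remain equal at level $n+1$ and satisfy the multiplicative recurrence $W_{n+1}=(p^{S_n}+p^{R_n}-1)W_n$. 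Iterating from $W_0=1$ yields the claimed product formula.

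There is really no main obstacle here: once the recurrence~\eqref{matrix3} has been established for the three basis weight functions, the lemma is just the observation that the matrix has constant row sum $p^{S_n}+p^{R_n}-1$ (the $(3,2)$-entry being $0$ is harmless because $\wt(w_n)=\wt(u_n)$ under the induction hypothesis, which is exactly what makes the third row also give the common sum). The only mild point worth recording explicitly is this equality of row sums, which is the structural reason why all three pivot elements at length $n$ share the same total degree despite the absence of full $\Z_3$-symmetry noted in the Remark after Definition~\ref{Def1}.
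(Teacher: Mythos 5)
Your proof is correct and follows essentially the same induction as the paper's: base case trivial, inductive step via the recurrence~\eqref{matrix3} collapsing to a single multiplicative factor once the three weights are assumed equal. The explicit remark about the constant row sum $p^{S_n}+p^{R_n}-1$ being the structural reason the argument works is precisely the point the paper makes in the Remark immediately following the lemma.
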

\begin{proof}
The base of induction is $n=0$, it is trivial.
Assume that the formula holds for $n\ge 0$.
By recurrence relation~\eqref{matrix3}, where  $\a_n=\wt v_n=\b_n=\wt w_n=\gamma_n=\wt u_n=\prod\limits_{i=0}^{n-1}(p^{S_{i}}+ p^{R_{i}}-1)$,
the first row yields
\begin{align*}
\wt v_{n+1}=\a_{n+1}=p^{S_n} \a_n+ (p^{R_n}-1)\b_n %=(p^{S_n}+p^{R_n}-1)\wt v_n\\
=(p^{S_n}+p^{R_n}-1)\prod\limits_{i=0}^{n-1}\big(p^{S_{i}}+ p^{R_{i}}-1\big)=
\prod\limits_{i=0}^{n}\big(p^{S_{i}}+ p^{R_{i}}-1\big).
\end{align*}
Using~\eqref{matrix3}, the same formula holds for $\wt w_{n+1}$, $\wt u_{n+1}$.
\end{proof}
\begin{Remark}
The proof shows importance of our peculiar construction~\eqref{pivot-3}.
In particular, we essentially used that
sums of elements of three rows of matrix~\eqref{matrix3} are equal, which originates from the fact that
the top indices for divided powers $y_i,z_i$ are the same, determined by $R_i$,
for $i\ge 0$.
Otherwise it will not be possible to find any formulas for weights of the pivot elements.
\end{Remark}

Recall that $\wt(*)$ is the {\it total degree} function determined by $\wt(v_0)=\wt(w_0)=\wt(u_0)=1$.
Let $\wt_{12}(*)$ be determined by the initial values $\wt_{12}(v_0)=\wt_{12}(w_0)=1$ and $\wt_{12}(u_0)=0$.
Observe that $\wt_{12}(v)=\wt_1(v)+\wt_2(v)$ and $\wt(v)=\wt_{12}(v)+\wt_3(v)$ for any monomial $v$.
Thus, $\wt_{12}(v)$ counts multiplicity of $v$ with respect to $v_0,w_0$ and
$\wt_3(v)$ counts multiplicity with respect to $u_0$ only.
\begin{Lemma} \label{Lweight_pivo2}
For all $n\ge 0$ we have
\begin{align*}
\wt_3(u_n)&=p^{{R_0}+\cdots +R_{n-1}},\qquad   \wt_3(v_n)=\wt_3(w_n)=0;\\
\wt_{12}(v_n)&=\wt_{12}(w_n)=\prod_{i=0}^{n-1}\big(p^{S_{i  }}+ p^{R_{i }}-1\big);\\
\wt_{12}(u_n)&=\prod_{i=0}^{n-1}\big(p^{S_{i  }}+ p^{R_{i  }}-1\big)-
p^{{R_0}+\cdots +R_{n-1}}.
\end{align*}
\end{Lemma}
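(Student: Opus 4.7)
The plan is to exploit the linearity of the recurrence \eqref{matrix3}: the space of weight functions satisfying \eqref{matrix3} is three-dimensional with basis $\wt_1,\wt_2,\wt_3$, and $\wt_{12}=\wt_1+\wt_2=\wt-\wt_3$. So once we compute $\wt_3$ on the pivots, the $\wt_{12}$ formulas fall out by subtracting from Lemma~\ref{Lweight_pivo}. The only real content is therefore the computation of $\wt_3(v_n)$, $\wt_3(w_n)$, $\wt_3(u_n)$.

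First, I would handle $\wt_3$ by induction on $n\ge 0$. The initial conditions are $\wt_3(v_0)=\wt_3(w_0)=0$ and $\wt_3(u_0)=1$. The crucial structural observation is that the third column of the matrix in \eqref{matrix3} has zeros in its first two rows, reflecting the asymmetry in \eqref{pivot-3}: the recurrence for $v_i,w_i$ never involves $z$-variables or $u_{i+1}$. Applying \eqref{matrix3} with $\a_n=\wt_3(v_n)$, $\b_n=\wt_3(w_n)$, $\g_n=\wt_3(u_n)$, the induction hypothesis $\wt_3(v_n)=\wt_3(w_n)=0$ gives
\begin{align*}
\wt_3(v_{n+1}) &= p^{S_n}\cdot 0 + (p^{R_n}-1)\cdot 0 = 0,\\
\wt_3(w_{n+1}) &= (p^{S_n}-1)\cdot 0 + p^{R_n}\cdot 0 = 0,\\
\wt_3(u_{n+1}) &= (p^{S_n}-1)\cdot 0 + p^{R_n}\wt_3(u_n) = p^{R_n}\wt_3(u_n),
\end{align*}
so $\wt_3(u_n)=p^{R_0+\cdots+R_{n-1}}$ by iterating. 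This closes the induction.

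Next, I would obtain the $\wt_{12}$ formulas by the identity $\wt_{12}(v)=\wt(v)-\wt_3(v)$, valid for any monomial. Combining the just-proved values of $\wt_3$ with Lemma~\ref{Lweight_pivo} (which gives $\wt(v_n)=\wt(w_n)=\wt(u_n)=\prod_{i=0}^{n-1}(p^{S_i}+p^{R_i}-1)$) immediately yields the three asserted formulas. As a sanity check, one can verify directly from \eqref{matrix3} with initial data $\wt_{12}(v_0)=\wt_{12}(w_0)=1$, $\wt_{12}(u_0)=0$ that the given expression for $\wt_{12}(u_{n+1})$ equals $(p^{S_n}-1)\wt_{12}(v_n)+p^{R_n}\wt_{12}(u_n)$, providing independent confirmation.

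There is no real obstacle here: the only mildly delicate step is recognizing that the peculiar "skew" shape of the matrix in \eqref{matrix3}, namely the vanishing of the $(1,3)$ and $(2,3)$ entries, forces $\wt_3$ to remain supported only on the $u$-tower, which is exactly the content of the first line of the claimed formulas.
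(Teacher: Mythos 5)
Your proof is correct and follows essentially the paper's approach: both use the matrix recurrence \eqref{matrix3} together with the additivity identity $\wt=\wt_{12}+\wt_3$ and Lemma~\ref{Lweight_pivo}. The only (minor, harmless) streamlining is that you induct only on the $\wt_3$ column, exploiting the block-triangular shape of the matrix, and then obtain all three $\wt_{12}$ formulas at once by subtraction, whereas the paper inducts on the first three formulas and uses the identity only for the last one.
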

\begin{proof}
Three formulas are checked by induction. Using $\wt(*)=\wt_{12}(*)+\wt_3(*)$ and Lemma~\ref{Lweight_pivo}, we get  the last formula.
\end{proof}

%********************************************************************************************************
\subsection{$\NO^3$-gradings}
By a {\it generalized monomial} $a\in\End R$ we call any (Lie or associative) product of
pure monomials and pivot elements.
By construction, actual pivot elements and their products are generalized monomials.
Observe that generalized monomials are written as infinite linear combinations of pure monomials.
Our construction implies that these pure monomials have the same weight;
we call this value the weight of a generalized monomial.
Thus, the weight functions are well-defined on generalized monomials as well.
Also, $\Gr(v)\in\NO^3$ for any generalized monomial $v$.

In many examples studied before~\cite{PeSh09,PeSh13fib,Pe16,Pe17,PeOtto,PeSh18FracPJ}
we were able, as a rule, to compute  explicitly basis functions for the space of weight functions and study multigradings in more details.
Using that base weight functions and multigradings we were able to get more information about our algebras.
In a general setting of the present paper it is not possible.
\begin{Theorem} $\strut$
\label{Tgraded}
\begin{enumerate}
\item
the multidegree weight function $\Gr(v)$ is additive on products of generalized monomials $v,w\in\End R$:
$$ \Gr([v, w])=\Gr(v)+\Gr(w),\qquad \Gr(v\cdot w)=\Gr(v)+\Gr(w). $$
\item
$\TT=\Lie_p(v_0,w_0,u_0)$, $\AA=\Alg(v_0,w_0,u_0)$ are $\NO^3$-graded by multidegree in the generators $\{v_0,w_0,u_0\}$:
$$ \TT=\mathop{\oplus}\limits_{(n_1,n_2,n_3)\in\NO^3} \TT_{n_1,n_2,n_3},\qquad
\AA=\mathop{\oplus}\limits_{(n_1,n_2,n_3)\in\NO^3} \AA_{n_1,n_2,n_3}. $$
\item
$\wt(*)$ counts the degree of $v\in\TT,\AA$ in $\{n_1,n_2,n_3\}$ yielding gradings:
$$ \TT=\mathop{\oplus}\limits_{n=1}^\infty \TT_n,\qquad \AA=\mathop{\oplus}\limits_{n=1}^\infty \AA_n. $$
\end{enumerate}
\end{Theorem}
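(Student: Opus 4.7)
My plan is to prove part (i) first and then deduce (ii) and (iii) as formal consequences. The crucial preliminary step, which is the substance of the remark just before the theorem, is to verify that each coordinate weight function $\wt_i$ is well defined on the pivot elements, meaning that every pure monomial appearing in the infinite-sum expansion \eqref{aibi0} of $v_n$, $w_n$, or $u_n$ carries the same weight. I would establish this by induction on the generation $n \ge 0$. The base case $n=0$ is the initial assignment of $\wt_i$ to $v_0, w_0, u_0$. For the inductive step, assuming all pure monomials in $v_{n+1}$ share weight $\alpha_{n+1}$, the two summands of
\[
v_n = \partial_{x_n} + x_n^{(p^{S_n}-1)}\, y_n^{(p^{R_n}-1)}\, v_{n+1}
\]
have weights $\alpha_n$ and $-(p^{S_n}-1)\alpha_n - (p^{R_n}-1)\beta_n + \alpha_{n+1}$, respectively, and these coincide precisely because the first row of \eqref{matrix3} reads $\alpha_{n+1} = p^{S_n}\alpha_n + (p^{R_n}-1)\beta_n$. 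The second and third rows handle $w_n$ and $u_n$ in the same fashion.

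Once pivots carry well-defined weights, additivity of $\wt_i$ on an associative product $v\cdot w$ or Lie bracket $[v,w]$ of generalized monomials is immediate by distributivity: expanding each factor as an infinite linear combination of pure monomials of constant weight and multiplying reduces the claim to additivity on pure monomials, which is tautological. An analogous calculation using the expansion of $p$-th powers \eqref{power_P} yields $\wt_i(x^{[p]}) = p\,\wt_i(x)$, which establishes (i). For (ii), additivity of $\Gr$ together with the coordinate initial values $\Gr(v_0)=(1,0,0)$, $\Gr(w_0)=(0,1,0)$, $\Gr(u_0)=(0,0,1)$ shows that every Lie or associative monomial in the generators of multidegree $(n_1,n_2,n_3)$ has $\Gr=(n_1,n_2,n_3)$, and $p$-th powers preserve multidegree componentwise. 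Pure monomials of distinct weights are linearly independent in $\End R$ (as distinct basis elements), so generalized monomials of distinct $\Gr$-values, being supported on disjoint sets of pure monomials, are likewise linearly independent in $\End R$, hence in $\TT$ and $\AA$. This yields the direct sum decomposition into multidegree components indexed by $\NO^3$. Part (iii) follows by regrouping components of common total weight $n = n_1+n_2+n_3 = \wt(\cdot)$.

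The only real obstacle is the well-definedness step verified above; it reflects the specific ``skew'' design of \eqref{pivot-3}, where the pairs $y_i$ and $z_i$ are given the same top index $p^{R_i}$. This is precisely what makes \eqref{matrix3} a consistent three-dimensional linear recurrence, admitting the three basis functions $\wt_1, \wt_2, \wt_3$ needed to define $\Gr$; were the top indices of $y_i$ and $z_i$ allowed to differ, the rows of \eqref{matrix3} could no longer be made compatible without collapsing the space of weight functions. After this verification, the remaining assertions are formal consequences of multilinearity of the Lie bracket, of the associative product, and of \eqref{power_P}.
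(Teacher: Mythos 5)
Your proof is correct in substance and follows the same route the paper takes. The paper's proof of this theorem is very terse --- a few lines --- resting on the observations made immediately before the statement that the weight of a generalized monomial is well defined; you are essentially supplying the verification that the paper asserts and omits, and the key insight you isolate, that the recurrence \eqref{matrix3} forces the pure monomials in the expansion of each pivot element to share a common weight, is exactly the mechanism the paper appeals to.

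One small but genuine logical issue is that your induction ``on the generation $n\ge 0$'' is misparameterized. The stated base case, the initial assignment of $\wt_i$ to $v_0, w_0, u_0$, is not a base case for the claim you are proving: well-definedness of $\wt_i(v_0)$ is not an assignment, since it requires verifying that all infinitely many pure monomials in \eqref{aibi0} agree in weight, and indeed $v_0$ is the hardest case because its expansion involves every generation. Moreover, your inductive step passes from $v_{n+1}$ to $v_n$, descending without ever reaching a terminating base. The correct parameter is the depth $k$ of a pure monomial within the expansion of a fixed $v_n$: the $k$-th term of \eqref{aibi0} is $\bigl(\prod_{i=n}^{n+k-1} x_i^{(p^{S_i}-1)} y_i^{(p^{R_i}-1)}\bigr)\partial_{x_{n+k}}$, the base case $k=0$ gives $\partial_{x_n}$ with weight $\alpha_n$, and the step $k\to k+1$ is exactly the telescoping identity from the first row of \eqref{matrix3} that you compute. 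The algebra you wrote is unchanged; only the logical scaffolding needs this re-indexing. A second, minor point: the identity $\wt_i(x^{[p]})=p\,\wt_i(x)$ supports part (ii), where compatibility of the grading with the $p$-mapping is needed, rather than part (i), which concerns only Lie brackets and associative products.
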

\begin{proof}
Claim i) follows from the additivity of the weight function on products of pure monomials. Consider ii).
Recall that $\Gr(v)\in\NO^3$ for any generalized monomial $v$ and  $\Gr(*)$ is additive on their products.
Thus, we get $\NO^3$-gradings on $\TT$, $\AA$.

Let $v$ be a monomial in the generators $\{n_1,n_2,n_3\}$ each number  $n_i$ counting entries of $v_0,w_0,u_0$, respectively.
By additivity, $\Gr(v)=n_1\Gr(v_0)+n_2\Gr(w_0)+n_3\Gr(u_0)= (n_1,n_2,n_3)$.
%Monomials having different multidegrees in the generators are expressed via pure monomials
%with the same different values of $\Gr(*)$, thus they are expressed via different sets of pure monomials.
Hence, $\TT$, $\AA$ are $\NO^3$-graded by multidegree in the generators.
Now, the last claim is evident.
\end{proof}

\subsection{Bounds on weights}
\begin{Lemma}[\cite{Pe17}, Lemma 6.5]\label{Lbounds1}
Let $w$ be a (power) standard monomial of first type of length $n\ge 0$. Then
$$ \wt(v_{n-1})+1\le \wt(w)\le\wt(v_n). $$
\end{Lemma}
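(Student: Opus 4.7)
My plan is to read off $\wt(w)$ directly from the explicit formulas for pivot elements and their heads, then optimize over the allowed divided-power parameters. The main tools will be Theorem~\ref{Tgraded} (identifying $\wt$ with total degree in $v_0,w_0,u_0$) and Lemma~\ref{Lweight_pivo} (giving $W_i := \wt(v_i) = \wt(w_i) = \prod_{j=0}^{i-1}(p^{S_j}+p^{R_j}-1)$). First I will extend the weight additively to all pure monomials in $\End R$ by declaring $\wt(\partial_{x_i}) = W_i$ and $\wt(x_i) = -W_i$ (similarly for $y_i,z_i$). The recurrence $W_{i+1} = (p^{S_i}+p^{R_i}-1)W_i$ ensures the two summands in each pivot element~\eqref{pivot-3} carry the same weight, so on an actual element of $\TT$ this bookkeeping agrees with the total-degree grading.

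Next, for a standard monomial of first type $w = r_{n-2}(x,y)\,h_n^{\xi_{n-1},\eta_{n-1}}$ with $n\ge 1$, a short direct calculation on either term of~\eqref{abp2} will yield $\wt(h_n^{\xi_{n-1},\eta_{n-1}}) = (\xi_{n-1}+\eta_{n-1}+2)\,W_{n-1}$, so that by additivity
\[ \wt(w) = (\xi_{n-1}+\eta_{n-1}+2)\,W_{n-1} - \sum_{i=0}^{n-2}(\xi_i+\eta_i)\,W_i. \]
The crucial identity I will then invoke is the telescoping sum
\[ \sum_{i=0}^{n-2}(p^{S_i}+p^{R_i}-2)\,W_i = \sum_{i=0}^{n-2}(W_{i+1}-W_i) = W_{n-1}-1, \]
which encodes how the geometric growth of the $W_i$ dominates the combinatorial degree count.

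I then optimize. The upper bound $\wt(w)\le W_n=\wt(v_n)$ is attained by $\xi_i=\eta_i=0$ for $i\le n-2$ together with the maximal admissible pair $(\xi_{n-1},\eta_{n-1})=(p^{S_{n-1}}-1,p^{R_{n-1}}-2)$, which by Lemma~\ref{Lcomm} collapses the head to $v_n$ itself. The lower bound $\wt(w)\ge W_{n-1}+1$ comes from the opposite extreme $\xi_i=p^{S_i}-1$, $\eta_i=p^{R_i}-1$ for $i\le n-2$ and $\xi_{n-1}=\eta_{n-1}=0$; the telescope then yields $\wt(w) = 2W_{n-1}-(W_{n-1}-1) = W_{n-1}+1$.

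Finally, for the power standard monomials $v_{n-1}^{p^i}$ ($1\le i\le S_{n-1}$) or $w_{n-1}^{p^i}$ ($1\le i\le R_{n-1}$) of length $n\ge 1$, the weight is simply $p^iW_{n-1}$, and the required inequalities $W_{n-1}+1\le pW_{n-1}$ and $p^{\max(S_{n-1},R_{n-1})}W_{n-1}\le W_n$ are trivial. The case $n=0$, with the convention $\wt(v_{-1})=0$, is immediate since then $w\in\{v_0,w_0\}$ has weight $1$. The one place that needs care---and is the main bookkeeping hazard---is checking that the extremizing pair $(\xi_{n-1},\eta_{n-1})=(p^{S_{n-1}}-1,p^{R_{n-1}}-2)$ in the upper-bound step indeed avoids the excluded pair $(p^{S_{n-1}}-1,p^{R_{n-1}}-1)$ forbidden in the definition of $h_n^{\xi,\eta}$; once that is confirmed, the whole lemma reduces to the two extremal evaluations above.
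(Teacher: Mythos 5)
Your proof is correct. The paper itself does not include a proof of Lemma~\ref{Lbounds1} (it is imported from~\cite{Pe17}, Lemma~6.5), so there is no in-paper argument to compare against directly; however, your method — reading off $\wt(w)$ by additivity from the explicit divided-power form, then extremizing over the allowed exponents and telescoping $\sum_{i}(p^{S_i}+p^{R_i}-2)W_i = W_{n-1}-1$ — is exactly the technique the paper deploys for the sibling Lemma~\ref{Lbounds2} (there phrased in terms of "partial recurrence expansions" of $u_0$, $v_0$ rather than an explicit telescope, but amounting to the same identity). Your explicit weight formula $\wt(w)=(\xi_{n-1}+\eta_{n-1}+2)W_{n-1}-\sum_{i=0}^{n-2}(\xi_i+\eta_i)W_i$ is right, both summands of $h_n^{\xi,\eta}$ do carry equal weight, the extremizing head $(\xi_{n-1},\eta_{n-1})=(p^{S_{n-1}}-1,p^{R_{n-1}}-2)$ indeed avoids the excluded corner, and the power-monomial and $n=0$ cases check out. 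The only cosmetic remark is that one should write $\wt(x_i^{(k)})=-kW_i$ rather than $\wt(x_i)=-W_i$, since only divided powers live in $R$; this does not affect the argument.
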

We shall also write a standard monomial $w$ of second type (see~\eqref{rmmp3B},~\eqref{rmmp}) as:
%\begin{equation}\label{second_type}
%\begin{split}
%&w=x_0^{(\xi_0)}y_0^{(\eta_0)}z_0^{(\zeta_0)}\!\!\cdots x_{n-2}^{(\xi_{n-2})}y_{n-2}^{(\eta_{n-2})}z_{n-2}^{(\zeta_{n-2})}\cdot
%x_{n-1}^{(\xi_{n-1})} z_{n-1}^{(\zeta_{n-1})} u_{n},\text{ where }\\
%&0\le\xi_i<p^{S_i},\ 0\le \eta_i,\zeta_i<p^{R_i},\ i\ge 0,\quad \text { except:}\quad  0\le\xi_{n-1}\le p^{S_{n-1}}-2;\qquad n\ge 0.
%\end{split}
%\end{equation}
%Write the same monomials also as:
\begin{equation}\label{second_type2}
\begin{split}
&w=r_{n-3}(x,y,z) x_{n-2}^{(p^{S_{n-2}}-1-\a)} y_{n-2}^{(p^{R_{n-2}}-1-\b)} z_{n-2}^{(p^{R_{n-2}}-1-\g)}\cdot
x_{n-1}^{(p^{S_{n-1}}-2-\xi)}  z_{n-1}^{(p^{R_{n-1}}-1-\zeta)} u_{n},\\
&\text{ where }  0\le\a <p^{S_{n-2}},\ 0\le \b,\gamma<p^{R_{n-2}},\ 0\le \zeta<p^{R_{n-1}};\text { and }\  0\le\xi \le p^{S_{n-1}}-2.
\end{split}
\end{equation}
\begin{Lemma}\label{Lbounds2}
Let $w$ be a (power) standard monomial of second type of length $n\ge 0$.
\begin{enumerate}
\item Let $w$ be a power standard monomial. Then
$$ \wt(v_{n-1})+1\le \wt(w)\le\wt(v_n). $$
\item Let $w$ be a standard monomial of second type of length $n\ge 2$, then
$$ (p^{S_{n-2}}{-}1)\wt(v_{n-2})< \wt(w)\le\wt(v_n). $$
\item
Let $w$ of second type be presented as~\eqref{second_type2}, we get more precise bounds:
\begin{align}\label{estimate}
\wt(w)&> (p^{S_{n-2}}{-}1+\a+\b+\g)\wt (v_{n-2})+(\xi+\zeta)\wt(v_{n-1});\\
\wt(w)&\le\wt(v_{n-1})(\xi+\zeta+2). \label{estimate2}
\end{align}
\item Let $w$ be of second type~\eqref{second_type2} and assume that $\xi>0$ or $\zeta>0$, then
$$ \wt(v_{n-1})+1\le \wt(w)\le\wt(v_n). $$
\end{enumerate}
\end{Lemma}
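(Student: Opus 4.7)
The plan is to establish claim (iii) by a direct weight computation and then derive claims (i), (ii), (iv) as corollaries.

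\emph{Computing $\wt(w)$.} Let $w$ be of the form~\eqref{second_type2}. From the recurrence \eqref{pivot-3} together with Lemma~\ref{Lweight_pivo} I have $\wt(x_i)=\wt(y_i)=\wt(z_i)=-\wt(v_i)$ and $\wt(u_n)=\wt(v_n)=(p^{S_{n-1}}+p^{R_{n-1}}-1)\wt(v_{n-1})$. The contributions of the neck letters at level $n-1$ together with $u_n$ collapse to $(\xi+\zeta+2)\wt(v_{n-1})$. Split this as $(\xi+\zeta)\wt(v_{n-1}) + 2\wt(v_{n-1})$ and rewrite $2\wt(v_{n-1}) = 2(p^{S_{n-2}}+p^{R_{n-2}}-1)\wt(v_{n-2})$ using the same recurrence; merging with the level $n-2$ contribution $-(p^{S_{n-2}}+2p^{R_{n-2}}-3-\a-\b-\g)\wt(v_{n-2})$ collapses into $(p^{S_{n-2}}+1+\a+\b+\g)\wt(v_{n-2})$. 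Finally, the tail $r_{n-3}(x,y,z)$ contributes $-T$ with $T:=\sum_{i=0}^{n-3}(\xi_i+\eta_i+\zeta_i)\wt(v_i)\ge 0$, yielding the identity
$$\wt(w)=(p^{S_{n-2}}+1+\a+\b+\g)\wt(v_{n-2}) + (\xi+\zeta)\wt(v_{n-1}) - T.$$

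\emph{Proving (iii).} The upper bound \eqref{estimate2} follows on dropping $-T$ and using $\a+\b+\g\le p^{S_{n-2}}+2p^{R_{n-2}}-3$, which makes the $\wt(v_{n-2})$ coefficient at most $2(p^{S_{n-2}}+p^{R_{n-2}}-1)$ and thus absorbs into $2\wt(v_{n-1})$. For \eqref{estimate}, the crux is the sharp estimate $T<2\wt(v_{n-2})$. I bound each summand by $(p^{S_i}+2p^{R_i}-3)\wt(v_i)$ and use the rewriting $(p^{S_i}+2p^{R_i}-3)\wt(v_i) = 2\wt(v_{i+1}) - (p^{S_i}+1)\wt(v_i)$; summation from $i=0$ to $n-3$ yields the partly telescoping inequality
$$T\le 2\wt(v_{n-2}) - (p^{S_0}+1)\wt(v_0) - \sum_{i=1}^{n-3}(p^{S_i}-1)\wt(v_i),$$
and the strict positivity of the correction terms (since $p^{S_i}\ge p\ge 2$) delivers $T<2\wt(v_{n-2})$. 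Substituting into the identity produces \eqref{estimate}.

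\emph{Deriving (i), (ii), (iv).} Claim (ii) is immediate from (iii): the lower bound is obtained by taking $\a=\b=\g=\xi=\zeta=0$, and the upper bound follows from $\xi+\zeta+2\le (p^{S_{n-1}}-2)+(p^{R_{n-1}}-1)+2 = p^{S_{n-1}}+p^{R_{n-1}}-1$, so $(\xi+\zeta+2)\wt(v_{n-1})\le\wt(v_n)$. For (iv), if $\xi+\zeta\ge 1$ then \eqref{estimate} gives $\wt(w) > (p^{S_{n-2}}-1)\wt(v_{n-2}) + \wt(v_{n-1}) > \wt(v_{n-1})$, and since $\wt(w)$ is an integer this upgrades to $\wt(w)\ge\wt(v_{n-1})+1$. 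For (i), a power standard monomial of second type and length $n\ge 1$ has the form $u_{n-1}^{p^i}$ with $1\le i\le R_{n-1}$, and by Lemma~\ref{Lweight_pivo} its weight equals $p^i\wt(v_{n-1})$, bounded below by $2\wt(v_{n-1})\ge\wt(v_{n-1})+1$ and above by $p^{R_{n-1}}\wt(v_{n-1})\le\wt(v_n)$.

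\emph{Main obstacle.} The delicate point is the strict inequality $T<2\wt(v_{n-2})$. A naive termwise bound $(p^{S_i}+2p^{R_i}-3)\wt(v_i)\le 2\wt(v_{i+1})$ yields only $T\le 2\sum_{j=1}^{n-2}\wt(v_j)$, which in general exceeds $2\wt(v_{n-2})$. The correct approach extracts the explicit correction $-(p^{S_i}+1)\wt(v_i)$ so that the summation partly telescopes; this identity is available precisely because the top indices of $y_i$ and $z_i$ coincide (both equal $p^{R_i}-1$) in the clover construction~\eqref{pivot-3}, which is the structural reason the lemma holds as stated.
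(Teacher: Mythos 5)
Your proof is correct and, at its core, uses the same computational mechanism as the paper's, but organized in the opposite order. The paper first proves claim (ii) by a self-similarity trick: it evaluates $\wt(w)$ at maximal allowed divided powers and recognizes the resulting products $\bigl(\prod_{i=0}^{n-1}x_i^{(p^{S_i}-1)}z_i^{(p^{R_i}-1)}\bigr)u_n$ and $\bigl(\prod_{i=0}^{n-2}x_i^{(p^{S_i}-1)}y_i^{(p^{R_i}-1)}\bigr)v_{n-1}$ as homogeneous components of the recurrence expansions of $u_0$ and $v_0$, each therefore of weight $1$; the telescoping is thus performed implicitly by that recognition. It then obtains (iii) by observing that lowering divided-power exponents at levels $n-2$ and $n-1$ adds exactly $(\a+\b+\g)\wt(v_{n-2})+(\xi+\zeta)\wt(v_{n-1})$, and (iv) is read off from (iii). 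You instead write a closed-form identity $\wt(w)=(p^{S_{n-2}}+1+\a+\b+\g)\wt(v_{n-2})+(\xi+\zeta)\wt(v_{n-1})-T$ with the tail-correction $T=\sum_{i=0}^{n-3}(\xi_i+\eta_i+\zeta_i)\wt(v_i)$, bound $T<2\wt(v_{n-2})$ by the explicit telescoping $(p^{S_i}+2p^{R_i}-3)\wt(v_i)=2\wt(v_{i+1})-(p^{S_i}+1)\wt(v_i)$, and derive (ii), (iv), (i) as corollaries. Both proofs hinge on exactly the same telescoping phenomenon (equivalent to the identity $\sum_{i=0}^{n-1}(p^{S_i}+p^{R_i}-2)\wt(v_i)=\wt(v_n)-1$); the paper's presentation hides it behind the self-similar structure of $u_0,v_0$, while yours surfaces it explicitly. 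Your version has the modest advantage of giving an exact formula for $\wt(w)$ valid for all parameter values, from which all four claims fall out uniformly; the paper's version is shorter and reuses the structural observation that drives the whole construction. Your handling of claim (i) matches the paper's (reduction to powers of $w_{n-1}$ via Lemma~\ref{Lbounds1}, or equivalently the direct computation $\wt(u_{n-1}^{p^i})=p^i\wt(v_{n-1})$). No gaps.
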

\begin{proof}
i) Weights of power standard monomials of second type (i.e. powers of $u_{n-1}$)
are equal to weights of powers of $w_{n-1}$, and we apply Lemma~\ref{Lbounds1}.

ii) Let $w$ be a standard monomial of second type~\eqref{rmmp3B} % {second_type}.
Clearly, weight is bounded by $\wt(u_n)=\wt(v_n)$.
We get a lower bound by taking the maximal allowed powers of variables.
Below we get homogeneous components of partial recurrence expansions for $u_0$ and $v_0$, and use that $\wt v_0= \wt u_0=1$.
\begin{align*}
&\wt(w) \ge  \wt \Big(\Big(\prod_{i=0}^{n-2}x_i^{(p^{S_i}-1)}y_i^{(p^{R_i}-1)}z_i^{(p^{R_i}-1)}\Big)
  x_{n-1}^{(p^{S_{n-1}}-2)}z_{n-1}^{(p^{R_{n-1}}-1)} u_n\Big)\\
&\qquad =\wt \Big(\big( \prod_{i=0}^{n-1} x_i^{(p^{S_i}-1)}z_i^{(p^{R_i}-1)}\big)u_n\Big)-\wt(x_{n-1}^{(1)})
    +\wt\Big(\prod_{i=0}^{n-2 }y_i^{(p^{R_i}-1)}\Big) \\
&\qquad =\wt u_0+\wt(v_{n-1})
      +\wt\Big(\prod_{i=0}^{n-2 }y_i^{(p^{R_i}-1)}\Big)
      =1+\wt\Big(\big( \prod_{i=0}^{n-2} x_i^{(p^{S_i}-1)}y_i^{(p^{R_i}-1)}\big)v_{n-1}\Big)
      -\sum_{i=0}^{n-2}\wt(x_i^{(p^{S_i}-1)}) \\[-3pt]
&\qquad =1+\wt(v_0)+\sum_{i=0}^{n-2} (p^{S_i}-1)\wt(v_i)
      \ge 2+(p^{S_{n-2}}-1)\wt (v_{n-2}).
\end{align*}

iii).
The preceding lower bound is given by the maximal allowed powers of the divided variables.
In comparison with that bound we get additional terms $(\a+\b+\g) \wt(v_{n-2})$ and $(\xi+\zeta)\wt(v_{n-1})$.
Recall that by~\eqref{heads2} the head of $w$ is $g_n^{\xi,\zeta}=[v_{n-1}^{\xi},u_{n-1}^{\zeta},[v_{n-1},u_{n-1}]]$,
the latter multiplicands having the same weight, we get $\wt(g_n^{\xi,\zeta})=\wt(v_{n-1})(\xi+\zeta+2)$.
Since tail variables only decrease the weight, we obtain~\eqref{estimate2}.

iv). We use iii) and $(\xi+\zeta)\wt(v_{n-1})\ge \wt(v_{n-1})$.
\end{proof}

%****************************************************************************************************
%\input lixo_cloverGK.tex
\section{Growth of general clover restricted Lie algebras $\TT(\Xi)$}

\subsection{Arbitrary tuple $\Xi$}
\begin{Lemma} \label{Lest}
Fix numbers $p>1$ and $r,s>0$.
Then $p^s+p^r-1>p^{(s+2r)/3}.$
\end{Lemma}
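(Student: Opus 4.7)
The plan is to split on whether $s\ge r$ or $s<r$, and in each case observe that one of the two summands $p^s, p^r$ already dominates $p^{(s+2r)/3}$, while the other contributes more than the $-1$ subtracted.

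First I would note that the exponent $(s+2r)/3$ is a convex combination of $s$ and $r$, so it lies between $\min(s,r)$ and $\max(s,r)$. Since $p>1$ the function $x\mapsto p^x$ is strictly increasing, which gives the key comparison: $p^{(s+2r)/3}\le p^{\max(s,r)}$.

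Case 1: $s\ge r$. Then $(s+2r)/3\le s$, so $p^{(s+2r)/3}\le p^s$. Hence
\[
p^s+p^r-1\ \ge\ p^{(s+2r)/3}+p^r-1\ >\ p^{(s+2r)/3},
\]
because $p^r>1$ (as $p>1$ and $r>0$).

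Case 2: $s<r$. Then $(s+2r)/3<r$, so $p^{(s+2r)/3}<p^r$, and similarly
\[
p^s+p^r-1\ >\ p^s+p^{(s+2r)/3}-1\ >\ p^{(s+2r)/3},
\]
since $p^s>1$.

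No serious obstacle arises — the only thing to watch is that the inequality is strict in both cases, which is guaranteed by $s,r>0$ forcing $p^s,p^r>1$. A one-line alternative via AM-GM ($p^s+p^r+p^r\ge 3p^{(s+2r)/3}$) gives $p^s+2p^r\ge 3p^{(s+2r)/3}$, but to extract the desired strict bound $p^s+p^r-1>p^{(s+2r)/3}$ from it one still has to compare $p^r$ with $p^{(s+2r)/3}$, so the direct case split above is cleaner.
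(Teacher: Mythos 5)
Your proof is correct and is essentially identical to the paper's: both split on $s\ge r$ versus $s<r$, observe that $(s+2r)/3$ lies between $r$ and $s$, and then absorb the $-1$ using $p^{\min(r,s)}>1$. No meaningful difference.
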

\begin{proof}
Assume that $s\ge r$, then $s\ge (s+2r)/3\ge r$ and
$p^s+p^r-1\ge p^{(s+2r)/3}+(p^r-1)>p^{(s+2r)/3}$.
Consider the case $s<r$, then $s< (s+2r)/3< r$ and
$p^s+p^r-1> p^{(s+2r)/3}+(p^s-1)>p^{(s+2r)/3}$.
\end{proof}

\begin{Theorem}\label{Tgrowth3}
Let $\Xi$ be an arbitrary tuple of parameters and $\TT(\Xi)$ the respective clover restricted Lie algebra.
Then $1\le \GKdim \TT(\Xi)\le 3$.
\end{Theorem}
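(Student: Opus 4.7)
The plan is as follows. For the lower bound $\GKdim\TT(\Xi)\ge 1$: by Lemma~\ref{L_clover_relations} iii) together with Lemma~\ref{Lweight_pivo}, all pivot elements $\{v_n,w_n,u_n\mid n\ge 0\}$ lie in $\TT$ and form an infinite sequence of elements with strictly increasing weights, so $\TT$ is infinite-dimensional. The trivial-gap observation from Section~\ref{Sdef} for finitely generated (restricted) Lie algebras then forces $\gamma_{\TT}(m)\ge m+1$, whence $\GKdim\TT\ge 1$.

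For the upper bound, I would count basis elements of weight at most $m$ using the explicit basis of Theorem~\ref{Tsemidirect}. The power standard monomials are clearly negligible (at most $\sum_{i<n}(S_i+2R_i)$ of them up to length $n$). For the ordinary standard monomials of length $n$, the tail--head decompositions~\eqref{rmmp3} and~\eqref{rmmp3B} give the naive counts $\prod_{i=0}^{n-1}p^{S_i+R_i}$ (first type) and $\prod_{i=0}^{n-2}p^{S_i+2R_i}\cdot p^{S_{n-1}+R_{n-1}}$ (second type). Applying Lemma~\ref{Lest} termwise yields
$$\prod_{i=0}^{n-1} p^{S_i+2R_i} \;\le\; \prod_{i=0}^{n-1}\bigl(p^{S_i}+p^{R_i}-1\bigr)^{3} \;=\; \wt(v_n)^{3},$$
and likewise $\prod p^{S_i+R_i}\le \wt(v_n)^{2}$, using $p^{S_i+R_i}\le p^{2\max(S_i,R_i)}\le (p^{S_i}+p^{R_i}-1)^2$. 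Since $\wt(v_n)$ grows at least geometrically in $n$, the total number of standard monomials of length $\le n$ is bounded by $C\wt(v_n)^{3}$.

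For arbitrary $m$, I would choose $N=N(m)$ to be the largest integer with $\wt(v_N)\le m$. Lemmas~\ref{Lbounds1} and~\ref{Lbounds2} guarantee that every basis element of length $\le N$ has weight $\le m$, contributing at most $C\wt(v_N)^{3}\le Cm^{3}$. A length-$(N{+}1)$ monomial contributes to $\gamma(m)$ only if its head weight $(\xi+\eta+2)\wt(v_N)$ (respectively $(\xi+\zeta+2)\wt(v_N)$) is $\le m$; this restricts the head parameters to an $O\bigl((m/\wt(v_N))^{2}\bigr)$-size set, so pairing with the tail count $\le\wt(v_N)^{3}$ yields $O(m^{2}\wt(v_N))\le O(m^{3})$. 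Assembling the pieces gives $\gamma_{\TT}(m)\preccurlyeq m^{3}$, whence $\GKdim\TT\le 3$.

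The hard part will be controlling the ``residual'' length-$(N{+}2)$ second-type monomials with $\xi=\zeta=0$ at the top level, which by Lemma~\ref{Lbounds2} ii) can have weight strictly less than $\wt(v_{N+1})$, and hence possibly $\le m$. For these the head $g_{N+2}^{0,0}$ has the fixed weight $2\wt(v_{N+1})$, potentially much larger than $m$, so the constraint $\wt(w)\le m$ forces the tail divided powers of~\eqref{second_type2} to lie close to their maximal values. Bounding the number of such ``near-saturated'' tails by $C m^{3}$ should follow from translating the weight constraint, via Lemma~\ref{Lbounds2} iii), into a linear inequality on the complementary powers $\a,\b,\g$, and then counting the admissible lattice points with a further appeal to Lemma~\ref{Lest}; in degenerate regimes (for instance $p=2$ with small $S_i$) analogous contributions from lengths $N{+}3,N{+}4,\dots$ would be handled by the same mechanism.
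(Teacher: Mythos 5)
Your overall strategy mirrors the paper's almost exactly: lower bound $\ge1$ from infinite dimensionality and the trivial gap; upper bound from the monomial basis of Theorem~\ref{Tsemidirect}, with the key step being Lemma~\ref{Lest} to get $\prod_{i<n}p^{S_i+2R_i}\le\wt(v_n)^3$ and Lemma~\ref{Lbounds2} to restrict the top-level parameters. The paper organizes the count by splitting into lengths $\le n-1$, $n$, $n+1$ (where $n$ is defined by $\wt(v_{n-1})<m\le\wt(v_n)$, i.e.\ $n=N+1$ in your notation), and uses Claim iv) of Lemma~\ref{Lbounds2} to force $\xi=\zeta=0$ at the top length, then~\eqref{estimate} to bound $\a+\b+\g$. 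This is exactly the mechanism you anticipate for the "hard" length-$(N{+}2)$ case, so that part of your plan is sound and not actually hard.

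Two points deserve correction. First, your worry about lengths $N{+}3,N{+}4,\dots$ in "degenerate regimes" is unfounded in every characteristic: for a second-type monomial of length $n'\ge N+3$, Lemma~\ref{Lbounds2} ii) gives $\wt(w)>(p^{S_{n'-2}}-1)\wt(v_{n'-2})\ge\wt(v_{n'-2})\ge\wt(v_{N+1})>m$, so there is nothing to handle beyond length $N+2$. Second, for the length-$(N{+}1)$ \emph{first}-type monomials you assert that $\wt(w)\le m$ forces the head parameters to satisfy $(\xi+\eta+2)\wt(v_N)\le m$. That is the crux of the bound, but it is \emph{not} supplied by any stated lemma: Lemma~\ref{Lbounds1} gives only $\wt(w)\ge\wt(v_{n-1})+1$, with no dependence on $\xi,\eta$, and Lemma~\ref{Lbounds2} iii) is stated for second type only. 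You would need to prove the first-type analogue (roughly $\wt(w)>(\xi+\eta)\wt(v_{n-1})$, which does follow because the maximal tail $\prod_{i\le n-2}x_i^{(p^{S_i}-1)}y_i^{(p^{R_i}-1)}$ decreases weight by exactly $\wt(v_{n-1})-1$). The paper sidesteps this entirely by citing the already-proved bound $\GKdim\LL(\Xi)\le2$ from~\cite[Theorem~7.2]{Pe17} for the first-type subalgebra, which is the cleaner route; your direct count is workable but requires this additional (easy, but unstated) estimate.
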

\begin{proof}
By Theorem~\ref{Tsemidirect}, $\TT(\Xi)$ is a semidirect product of $\LL(\Xi)$ with the ideal $J$,
where bases of $\LL(\Xi)$ and $J$ consist of monomials of first and second types, respectively.
By~\cite[Theorem 7.2]{Pe17}, $\GKdim \LL(\Xi)\le 2$,
yielding an upper bound on the number of the (power) standard monomials of first type.
Since power standard monomials of second type (i.e. powers of $u_n$, $n\ge 0$) behave like powers of $w_n$,
the same estimate on growth of $\LL(\Xi)$ applies to them.

Fix a number $m>1$. It remains to derive an upper bound on the number of standard monomials of second type of weight at most $m$.
Let $n=n(m)$ be such that
\begin{equation}\label{wtan-1}
\wt(v_{n-1})< m\le \wt(v_n).
\end{equation}
Put $m_0:=\wt(v_{n-1})$ and $m_1:=[m/m_0]$.
By~\eqref{Lweight_pivo} and Lemma~\ref{Lest},
\begin{equation}\label{m0}
m_0=\wt(v_{n-1})=\prod_{i=0}^{n-2}(p^{S_i}+p^{R_i}-1)> p^{(S_0+\cdots+ S_{n-2}+2(R_0+\cdots+R_{n-2}))/3},\quad n\ge 2.
\end{equation}
Let $w$ be a standard monomial of second type of length $n'$ and $\wt(w)\le m$. Assume that $n'\ge n+2$.
By Claim ii) of Lemma~\ref{Lbounds2},
$m\ge \wt (w)>\wt(v_{n'-2})\ge \wt(v_n)$, a contradiction with~\eqref{wtan-1}.
Hence, $w$ is of length at most $n+1$.
%%Similarly, using Claim iv) of Lemma~\ref{Lbounds2}, a power standard monomials of second type
%%of weight bounded by $m$ are of length at most $m$.

1) We evaluate a number $f_1(m)$ of standard monomials $w$ of second type of length $n+1$ satisfying $\wt(w)\le m$.
By claim iv) of Lemma~\ref{Lbounds2}, $\xi=\zeta=0$
(i.e. the neck variables reach the maximal values). Thus, we get monomials
\begin{equation}\label{tails2}
w=r_{n-2}(x,y,z) x_{n-1}^{(p^{S_{n-1}}-1-\a)} y_{n-1}^{(p^{R_{n-1}}-1-\b)} z_{n-1}^{(p^{R_{n-1}}-1-\g)}\cdot
x_{n}^{(p^{S_{n}}-2)}  z_{n}^{(p^{R_{n}}-1)} u_{n+1}.
\end{equation}
Using~\eqref{rmmp} and~\eqref{m0}, we estimate a number of tails $r_{n-2}(x,y,z)$ in~\eqref{tails2} as:
\begin{equation}\label{tails}
p^{S_0+\cdots+ S_{n-2}+2(R_0+\cdots+R_{n-2})}< m_0^3.
\end{equation}
Using estimate~\eqref{estimate} (the indices are shifted by one!),
$ m\ge \wt(w)> (1+\a+\b+\g)\wt (v_{n-1})$.  We get estimates
\begin{equation}\label{xi_eta}
0\le \a+\b+\g \le \Big[\frac{m}{\wt(v_{n-1})}\Big]-1= m_1-1,\qquad \a,\b,\g\ge 0.
\end{equation}
A number of possibilities for variables with indices
$n-1$ in~\eqref{tails2} is bounded by a number of triples of integers $\a,\b,\g$
satisfying~\eqref{xi_eta}, which is equal to
\begin{equation}\label{headss}
\binom {m_1+2}{3}=\frac{m_1(m_1+1)(m_1+2)}6\le m_1^3,
\end{equation}
where the last estimate is checked directly for all $m_1\ge 1$.
Using~\eqref{tails} and~\eqref{headss}, we get
\begin{equation*}
f_1(m)< m_0^3 m_1^3 = (m_0m_1)^3\le  m^3.
\end{equation*}

2) Let $f_2(m)$ be a number of standard monomials of second type~\eqref{second_type2}
of length $n$ satisfying $\wt (w)\ \le m$.
Using~\eqref{m0}, a number of possibilities for divided powers with indices
$0,\ldots,n-2$ in~\eqref{second_type2} is evaluated by
\begin{equation}\label{tails7}
p^{S_0+\cdots+ S_{n-2}+2(R_0+\cdots+R_{n-2})}< m_0^3.
\end{equation}
Using estimate~\eqref{estimate},
$ m\ge \wt(w)> (\xi+\zeta)\wt (v_{n-1}).$
We get estimates
\begin{equation}\label{xizeta8}
0\le \xi+\zeta \le \Big[\frac{m}{\wt(v_{n-1})}\Big]= m_1,\qquad \xi,\zeta\ge 0.
\end{equation}
A number of possibilities for the neck letters $x_{n-1},z_{n-1}$
in~\eqref{second_type2} is bounded by a number of pairs of integers $\xi,\zeta$
satisfying~\eqref{xizeta8}. We get a bound
\begin{equation}\label{heads9}
\binom {m_1+2}{2}\le 3 m_1^3, \qquad m_1\ge 1,
\end{equation}
where one checks the last estimate directly.
Using~\eqref{tails7} and~\eqref{heads9} we obtain an estimate
\begin{equation*}
f_2(m)\le 3m_0^3 m_1^3 \le  3m^3.
\end{equation*}

3) Let $f_3(m)$ be a number of all standard monomials of second type~\eqref{second_type2} of length $n-1$.
Using~\eqref{m0}, a number of possibilities for all divided powers (having indices
$0,\ldots,n-2$) is evaluated by
\begin{equation*}
f_3(m)\le p^{S_0+\cdots+ S_{n-2}+2(R_0+\cdots+R_{n-2})}< m_0^3\le m^3.
\end{equation*}
A similar estimate on the number of standard monomials of second type of length $n-2$ is smaller at least by factor $p^{-3}$
than the estimate above. The same applies to lengths $n-3,\ldots, 0$.
Let $\tilde f_3(m)$ be the number of standard monomials of second type~\eqref{second_type2}
of length  at most $n-1$. We get a bound
$$
\tilde f_3(m)\le \sum_{i=0}^{n-1}p^{-3i}\cdot f_3(m)\le \frac {m^3}{1-p^{-3}}\le \frac 87 m^3<2m^3.
$$
Finally, the obtained bounds yield a desired estimate on the number of standard monomial
of second type and weight at most $m$:
\begin{equation*}
f_1(m)+f_2(m)+\tilde f_3(m)\le 6m^3. \qedhere
\end{equation*}
\end{proof}

%\newpage
\subsection{Periodic and constant tuples $\Xi$}
\begin{Theorem}\label{Tperiod}
Let a tuple $\Xi=(S_i,R_i| i\ge 0)$ be periodic:
$S_{i+N}=S_i$, $R_{i+N}=R_i$ for $i\ge 0$, where $N\in\N$ is fixed.
Denote
$$ \mu:=\prod_{i=0}^{N-1} (p^{S_i}+p^{R_i}-1),\qquad
\sigma:= \sum_{i=0}^{N-1 }(S_i+2R_{i}),\qquad
\lambda:=\frac{\sigma\ln p}{\ln \mu}.$$
Consider the clover restricted Lie algebra $\TT=\TT(\Xi)$.  Then
\begin{enumerate}
\item $\GKdim \TT=\LGKdim \TT=\lambda$.
\item
$C_1 m^{\lambda}< \gamma_\TT (m)< C_2 m^{\lambda}$ for $m\ge 1$, and $C_1$, $C_2$ being positive constants.
\item $\lambda\in [1,3]$.
\end{enumerate}
\end{Theorem}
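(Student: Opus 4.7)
My plan is to exploit periodicity to make the counting in Theorem~\ref{Tgrowth3} tight. From Lemma~\ref{Lweight_pivo} and periodicity, $\wt(v_{kN}) = \wt(w_{kN}) = \wt(u_{kN}) = \mu^k$ for every $k \ge 0$. The number of tails $r_{kN-1}(x,y,z)$ in the sense of~\eqref{rmmp} is $\prod_{i=0}^{kN-1} p^{S_i+2R_i} = p^{k\sigma}$. The definition of $\lambda$ gives $\mu^\lambda = p^\sigma$, so
\[
\#\{\text{tails }r_{kN-1}(x,y,z)\} = p^{k\sigma} = \mu^{k\lambda} = \wt(v_{kN})^\lambda.
\]
This precise matching of the tail count to the $\lambda$-th power of the pivot weight is the identity that forces the exponent $\lambda$ to appear in the growth function.

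\textbf{Lower bound.} For $m$ large, choose the largest integer $k$ with $2\mu^k \le m$ and put $n := kN+1$. Consider standard monomials of second type~\eqref{rmmp3B} of length $n$ with neck parameters $\xi = \zeta = 0$ and arbitrary tail $r_{n-2}(x,y,z)$. By~\eqref{estimate2} of Lemma~\ref{Lbounds2}, each such monomial has weight at most $2\wt(v_{n-1}) = 2\mu^k \le m$. By Theorem~\ref{Tsemidirect} these are distinct basis elements of $\TT$, and their count is $p^{k\sigma} = \mu^{k\lambda}$. Since $m < 2\mu^{k+1}$ by the choice of $k$, we have $\mu^k > m/(2\mu)$, giving $\gamma_\TT(m) \ge \mu^{k\lambda} \ge C_1 m^\lambda$ for a positive constant $C_1$.

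\textbf{Upper bound.} I would refine the counting in Theorem~\ref{Tgrowth3}, replacing its crude $m_0^3$ bound by the sharp $m_0^\lambda$ available in the periodic setting. Given $m$, pick $n$ with $\wt(v_{n-1}) < m \le \wt(v_n)$ and write $n = kN + r$ with $0 \le r < N$. As in Theorem~\ref{Tgrowth3}, every (power) standard monomial of weight $\le m$ has length $\le n+1$. For standard monomials of second type of length exactly $n+1$, the tail count $p^{\sum_{i=0}^{n-1}(S_i+2R_i)} = p^{k\sigma + \sum_{i=0}^{r-1}(S_i+2R_i)}$ is bounded by a constant times $\wt(v_n)^\lambda = O(m^\lambda)$; the factor $m_1 = \lfloor m/\wt(v_{n-1})\rfloor$ is bounded by $\max_i\{p^{S_i}+p^{R_i}-1\}$, an absolute constant depending only on $\Xi$. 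Shorter-length second-type monomials contribute a geometric series (each drop by $N$ introduces a factor $p^{-\sigma}$ in the tail count) summing to $O(m^\lambda)$. For standard monomials of first type (which span $\Lie_p(v_0,w_0) \cong \LL(\Xi)$), the analogous sharpened estimate in the periodic case, following~\cite{Pe17}, also yields $O(m^\lambda)$. Adding all contributions gives $\gamma_\TT(m) \le C_2 m^\lambda$, proving (ii); claim (i) is then immediate from the definitions of $\GKdim$ and $\LGKdim$.

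\textbf{Claim (iii) and the main obstacle.} The inequality $\lambda \ge 1$ follows from the trivial gap: since $\TT$ is infinite-dimensional, $\gamma_\TT(m) \ge m+1$, so $\lambda = \LGKdim \TT \ge 1$. The inequality $\lambda \le 3$ is immediate from Theorem~\ref{Tgrowth3}; alternatively, applying Lemma~\ref{Lest} termwise to each factor of $\mu$ yields $\sigma \ln p < 3 \ln \mu$, hence $\lambda < 3$. The principal technical obstacle will be the upper bound: one must carefully verify that each subfamily of second-type monomials (length $n+1$, length $n$, and shorter lengths) contributes at most $O(m^\lambda)$, and merge with the first-type count without losing the sharp exponent. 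The remaining work is bookkeeping of constants depending only on $p$, $N$, and the finitely many parameters $S_0, \ldots, S_{N-1}$, $R_0, \ldots, R_{N-1}$.
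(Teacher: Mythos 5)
Your proposal is correct and follows essentially the same route as the paper: exploit periodicity to get $\wt(v_{kN})=\mu^k$, use the identity $\mu^\lambda=p^\sigma$ to tie the count of tails to $m^\lambda$, and bound second‑type monomial counts above and below, with first‑type and power monomials contributing lower‑order terms. The only cosmetic differences are that the paper indexes $n$ directly by multiples of $N$ (so $\wt(v_{nN})=\mu^n$), while you take general $n=kN+r$ and absorb the remainder into the constant, and your lower bound uses the weight bound~\eqref{estimate2} at a fixed neck $\xi=\zeta=0$ rather than shifting the length down to $(n-1)N$ as the paper does.
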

\begin{proof}
By~\eqref{Lweight_pivo} and periodicity,  $\wt (v_{jN})=\mu^j$, $j\ge 0$.
Fix a number $m>1$.
We choose $n=n(m)$ satisfying
\begin{equation}\label{wtan-A}
\wt(v_{(n-1)N})=\mu ^{n-1}< m\le \wt(v_{nN})=\mu^{n}.
\end{equation}
Then
\begin{equation}\label{nup}
n< \log_{\mu}(m)+1.
\end{equation}

Consider a standard monomial $w$ of second type such that $\wt(w)\le m$.
Assume that $w$ has length $n'\ge nN+2$.
By claim ii) of Lemma~\ref{Lbounds2} $\wt(w)>\wt(v_{n'-2})\ge \wt v_{nN}\ge m$, a contradiction.
Hence, $w$ is of length at most $nN+1$.
Let $f_1(n)$ be a number of standard monomials of second type of length at most $(n+1)N$.
We evaluate their number  using a form of these monomials~\eqref{rmmp3B}, \eqref{rmmp},
periodicity, and~\eqref{nup}:
$$ f_1(n)\le  \prod_{i=0}^{(n+1)N-1} p^{S_i+2R_i}
=p^{\sigma (n+1)}
\le p^{\sigma (\log_{\mu}(m)+2)}
\le p^{2\sigma} m^{\sigma \ln p/ \ln \mu}=p^{2\sigma} m^\lambda. $$

Let $w$ be a standard monomial of first type with $\wt(w)\le m$.
As above, by the lower bound in Lemma~\ref{Lbounds1} and the upper bound in~\eqref{wtan-A}, $w$ is of length at most $nN$.
Let $f_2(n)$ be the number of standard monomials of first type of length at most $nN$.
Similarly,  by~\eqref{rmmp3}, \eqref{rmmp}, and~\eqref{nup}, we get
$ f_2(n)< p^{\sigma n}$ yielding a smaller bound than above.

Let $f_3(n)$ be the number of all power standard monomials of weight at most $m$.
By the lower estimates of Lemmas~\ref{Lbounds1}, \ref{Lbounds2}, and the upper bound in~\eqref{wtan-A}, they are of length at most $nN$.
We apply Lemma~\ref{Lnum_power} and~\eqref{nup};
$$
f_3(n)\le \sum_{i=0}^{nN-1}(S_i+2R_i)=  n\sigma \le (\log_{\mu}(m)+1)\sigma.
$$
Now, the upper bound follows using that $\gamma_\TT(m)\le f_1(n)+f_2(n)+f_3(n)$.

By the upper bound in~\eqref{wtan-A}, $n\ge \log_{\mu}(m)$.
Consider standard monomials $w$ of second type~\eqref{rmmp3B} of length $(n-1)N$.
By the lower bound in~\eqref{wtan-A},  $\wt(w)\le \wt(v_{(n-1)N}) <m$.
We evaluate
the number of different parts of their tails $r_{(n-1)N-2}(x,y,z)$ (see~\eqref{rmmp3B}, \eqref{rmmp}), yielding a lower bound:
\begin{equation*}
\gamma_\TT(m)\ge \prod_{i=0}^{(n-3)N-1} p^{S_i+2R_i}
= p^{\sigma (n-3)}\ge p^{\sigma (\log_\mu (m)-3)}
=p^{-3\sigma} m^{{\sigma \ln p}/ \ln \mu }=p^{-3\sigma} m^\lambda. %\qedhere
\end{equation*}
The second claim follows by our estimates.
The last claim follows from Theorem~\ref{Tgrowth3}.
\end{proof}

%\subsection{Constant tuple $\Xi$}
\begin{Corollary}\label{Cconstant}
Let a tuple $\Xi=(S_i,R_i| i\ge 0)$ be constant: $S_i=S$, $R_i=R$ for $i\ge 0$, where $S,R\in\N$.
Denote
$\displaystyle \lambda=\frac{(S+2R)\ln p}{\ln(p^S+p^R-1)}. $
Consider the clover restricted Lie algebra $\TT=\TT(S,R):=\TT(\Xi)$.  Then
%\begin{enumerate} \item
$\GKdim \TT=\LGKdim \TT=\lambda$.
%\item $C_1 m^{\lambda}< \gamma_\TT (m)< C_2 m^{\lambda}$ for $m\ge 1$, and $C_1$, $C_2$ being positive constants.
%\item $\lambda\in [1,3]$.
%\end{enumerate}
\end{Corollary}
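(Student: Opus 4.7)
The plan is to recognize that this corollary is simply the specialization of Theorem~\ref{Tperiod} to the smallest possible period $N=1$, since a constant tuple is trivially periodic with period $1$. Thus the proof will be essentially a one-line substitution: verify that the three quantities $\mu$, $\sigma$, $\lambda$ from Theorem~\ref{Tperiod} reduce to the stated values in the corollary, and then invoke the conclusion of that theorem verbatim.

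Concretely, I would set $N=1$ and compute:
\begin{align*}
\mu &= \prod_{i=0}^{0}\bigl(p^{S_i}+p^{R_i}-1\bigr) = p^{S}+p^{R}-1,\\
\sigma &= \sum_{i=0}^{0}(S_i+2R_i) = S+2R,\\
\lambda &= \frac{\sigma \ln p}{\ln \mu} = \frac{(S+2R)\ln p}{\ln(p^{S}+p^{R}-1)},
\end{align*}
which agrees with the $\lambda$ defined in the corollary statement. Theorem~\ref{Tperiod}(i) then yields $\GKdim\TT=\LGKdim\TT=\lambda$ directly.

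Since this is a pure specialization, there is essentially no obstacle: no new estimate is needed and no case analysis arises. The only thing worth double-checking is that the constant tuple genuinely satisfies the hypotheses of Theorem~\ref{Tperiod} (which it does, with any $N\in\N$; choosing $N=1$ is the simplest). One could optionally remark that the stronger bound $C_1 m^{\lambda}<\gamma_\TT(m)<C_2 m^{\lambda}$ from Theorem~\ref{Tperiod}(ii) also passes to this setting, and that $\lambda\in[1,3]$ by Theorem~\ref{Tperiod}(iii) or directly from Theorem~\ref{Tgrowth3}, although only the Gelfand--Kirillov dimension assertion is required for the corollary as stated.
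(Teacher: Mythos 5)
Your proof is correct and matches the paper's intent exactly: the corollary is stated immediately after Theorem~\ref{Tperiod} with no separate proof precisely because it is the $N=1$ specialization, and your computation of $\mu$, $\sigma$, $\lambda$ verifies that the formulas reduce as required.
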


\begin{Corollary}\label{Cinterval}
Let $\ch K=p>0$.
Consider the self-similar clover restricted  Lie algebras $\TT(S,R)$ given by constant tuples $\Xi$
determined by two integers $S,R$.
Then $\{\GKdim\TT(S,R)\mid S,R\in\N\}$ is dense on $[1,3]$.
\end{Corollary}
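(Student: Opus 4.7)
Writing $\lambda(S,R) := (S+2R)\ln p / \ln(p^S + p^R - 1)$, Corollary~\ref{Cconstant} identifies $\GKdim \TT(S,R) = \lambda(S,R)$, so the task reduces to showing that $\{\lambda(S,R) \mid S, R \in \N\}$ is dense in $[1,3]$. My approach is to control the limiting behavior of $\lambda(S,R)$ along sequences with a fixed rational ratio $R/S$, and then use density of rationals in $[0,1]$ together with a two-step approximation.

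The key analytic observation is that for $R \le S$ one has
\[
  \ln(p^S + p^R - 1) = S\ln p + \ln\bigl(1 + p^{R-S} - p^{-S}\bigr),
\]
and the correction term lies in $[0, \ln 2]$ uniformly. Hence if $R/S = \rho$ is held fixed at a rational value in $[0,1]$ while $S \to \infty$, then
\[
  \lambda(S,R) \;=\; \frac{(1+2\rho)\ln p}{\ln p + O(1/S)} \;\longrightarrow\; 1 + 2\rho.
\]
Concretely, given a rational $\rho = a/b \in (0,1]$, the sequence $(S_n, R_n) := (nb, na)$ realizes $\lambda(S_n, R_n) \to 1 + 2\rho$; the edge case $\rho = 0$ (i.e.\ the target $t=1$) is handled by fixing $R = 1$ and letting $S \to \infty$, giving $\lambda(S, 1) \to 1$. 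This already shows that each point of the dense set $\{1 + 2\rho \mid \rho \in \Q \cap [0,1]\} \subset [1,3]$ is a limit of achievable values $\lambda(S,R)$.

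Density then follows by a standard triangle inequality. Given $t \in [1,3]$ and $\varepsilon > 0$, choose a rational $\rho = a/b \in [0,1]$ with $|1 + 2\rho - t| < \varepsilon/2$, then pick $n$ large enough that $|\lambda(nb, na) - (1 + 2\rho)| < \varepsilon/2$; this yields $|\lambda(nb, na) - t| < \varepsilon$. I anticipate no real obstacle, since the argument reduces to an elementary asymptotic of the denominator; the only mild care needed is at the right endpoint $t=3$, where one takes $S = R$ and must verify that the resulting $\ln(2 - p^{-S})$ term is absorbed into the $O(1/S)$ correction as $S \to \infty$.
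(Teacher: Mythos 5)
Your proof is correct, and it takes a genuinely different route from the paper's. Both start from Corollary~\ref{Cconstant}, which reduces the claim to density of $\lambda(S,R) = (S+2R)\ln p/\ln(p^S+p^R-1)$, but the strategies then diverge. The paper uses a ``mesh'' argument: it shows $\lambda(S,S)\to 3$ and $\lambda(S,1)\to 1$, and then asserts (with a terse ``one checks'') that for fixed large $S$ and $R \in \{1,\ldots,S\}$, moving from $R$ to $R+1$ changes $\lambda$ by $O(1/S)$, so that the values $\{\lambda(S,R)\}_{R=1}^{S}$ sweep across $[1,3]$ with mesh width going to zero. You instead fix a rational ratio $\rho = a/b \in (0,1]$ and show directly that $\lambda(nb, na) \to 1+2\rho$ as $n\to\infty$, which is an honest limit computation rather than a step-size estimate; density then follows from density of $\{1+2\rho : \rho \in \Q\cap[0,1]\}$ in $[1,3]$. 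Your approach has the advantage of making the limiting values explicit and avoiding the uniformity-in-$R$ claim that the paper leaves to the reader; the paper's mesh argument has the mild advantage of producing approximants $(S,R)$ with $S$ fixed rather than needing $S\to\infty$ along a sequence adapted to the target. Your asymptotic bookkeeping is sound: for $1\le R\le S$ the correction term $\ln(1+p^{R-S}-p^{-S})$ indeed lies in $(0,\ln 2]$, hence is $O(1)$ uniformly, and the edge cases $\rho=0$ (via $R=1$) and $\rho=1$ (via $R=S$) are handled correctly.
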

\begin{proof}
By choosing the numbers $R=S$ sufficiently large, we can obtain
$\GKdim\TT(S,R)$ arbitrarily close to 3.
By fixing $R=1$ and choosing $S$ sufficiently large we can obtain
$\GKdim\TT(S,R)$ arbitrarily close to 1.

Consider a large positive integer $S$ and $R\in\{1,\ldots,S\}$.
One checks that for $R$, $R'=R+1$ the respective Gelfand-Kirillov dimensions
differ by $O(1/S)$, $S\to+\infty$.
\end{proof}

%\begin{Corollary}
%Let $\ch K=p\ge 2$, fix numbers $1\le \lambda \le \mu\le 3$.
%There exists a clover restricted Lie algebra $\TT=\TT(\Xi)$ such that $$\lambda =\LGKdim \TT \le \GKdim \TT=\mu. $$
%\end{Corollary}
%\begin{proof}
%By Corollary~\ref{Cinterval}, there exist sequences $\Xi=(S_i,T_i\mid i\ge 0)$ and $\Xi'=(S'_i,T'_i\mid i\ge 0)$ such that
%$$ \GKdim \TT(\Xi)=\LGKdim \TT(\Xi)=\lambda, \quad \GKdim \TT(\Xi')=\LGKdim \TT(\Xi')=\mu. $$
%Now form a new sequence of parameters $\Xi''$ as follows.
%We take finitely many initial parameters from $\Xi$ so that both $\GKdim \TT(\Xi)$ *************************
%\end{proof}
%*****************************************************************************************************
\section{Proof of main results: Clover restricted Lie algebras of Quasi-linear growth}

\begin{proof}[Proof of Theorem~\ref{Tparam}]
All claims follow from the first one.
Recall that we consider the tuple of integers $\Xi_\kappa:=(S_i:=[(i+1)^{1/\kappa-1}], R_i:=1\mid i\ge 0)$,
and the clover Lie algebra $\TT=\TT(\Xi_\kappa)$.
By Theorem~\ref{Tsemidirect}, $\TT(\Xi)$ is a semidirect product of $\LL(\Xi)$ with the ideal $J$,
where bases of $\LL(\Xi)$ and $J$ consist of monomials of first and second types, respectively.

We start with general estimates used in the proof of the next theorem as well.
By Lemma~\ref{Lweight_pivo},
\begin{align}\label{boundSlower}
\wt(v_n)&=\prod_{i=0}^{n-1}(p^{S_i}{+}p{-}1)>p^{S_0+\cdots+S_{n-1}}, \quad n\ge 1;\\
\wt(v_n)&=\prod_{i=0}^{n-1}(p^{S_i}{+}p{-}1)<\theta p^{S_0+\cdots+S_{n-1}}, \quad
\theta:=\prod_{i=0}^\infty (1{+}p^{1-S_i}), \quad n\ge 1.
\label{boundSupper}
\end{align}
Indeed, it is well known that convergence of the infinite product is equivalent to convergence of the sum
$\sum_{i=0}^\infty p^{1-S_i}$.
We have $S_i>2\log_p i$ for $i\ge N$.
Thus, $\sum_{i=N}^\infty p^{-S_i}\le \sum_{i=N}^\infty 1/i^2<\infty$.
We shall use the following well-known estimates:
\begin{equation}\label{boundsC}
(\kappa+o(1))  n^{1/\kappa} =\sum_{i=0}^{n- 2}S_i
<  \sum_{i=0}^{n}(i+1)^{1/\kappa-1} =(\kappa+o(1))  n^{1/\kappa},\qquad n\to\infty.
\end{equation}

Let us prove the desired upper bound on the standard monomials of second type.
Fix a number $m>1$. Choose $n=n(m)$ such that
\begin{equation}\label{an1an}
\wt(v_{n-1})< m\le \wt(v_n).
\end{equation}
Put $m_0:=\wt(v_{n-1})$ and $m_1:=[m/m_0]$.
By~\eqref{boundSlower} and lower estimate in~\eqref{boundsC}, we get
\begin{align}\label{m22}
m_0&=\wt(v_{n-1})>p^{S_0+\cdots+S_{n-2}}
  \ge p^{(\kappa+o(1)) n^{1/\kappa}},\quad  n\to\infty;\\
\label{llpm}
n &\le  \Big( (1/\kappa{+}o(1))\log_p m_0\Big)^{\kappa}
\le  \Big(\frac {1{+}o(1)}{\kappa\ln p}\ln m\Big)^{\kappa}, \qquad m\to \infty.
\end{align}
%%We use the observation made in the proof of Theorem~\ref{Tgrowth3}.
Let $w$ be a standard monomial of second type with $\wt(w)\le m$. Suppose that it has length $n'\ge n+2$.
By claim ii) of Lemma~\ref{Lbounds2}, $\wt(w)> \wt(v_{n'-2})\ge \wt(v_n)\ge m\ge \wt(w)$.
The contradiction proves that  $w$ is of length at most $n+1$.

1) We evaluate a number $f_1(m)$ of standard monomials $w$ of second type of length $n+1$ satisfying $\wt(w)\le m$.
By Claim~iv) of Lemma~\ref{Lbounds2}, the head variables in $w$ have the maximal degrees and we get monomials of the form:
\begin{equation}\label{tails2a}
w=r_{n-2}(x,y,z) x_{n-1}^{(p^{S_{n-1}}-1-\a)} y_{n-1}^{(p^{R_{n-1}}-1-\b)} z_{n-1}^{(p^{R_{n-1}}-1-\g)}\cdot
x_{n}^{(p^{S_{n}}-2)}  z_{n}^{(p^{R_{n}}-1)} u_{n+1}.
\end{equation}
Using~\eqref{m22}, we evaluate the number of tails $r_{n-2}(x,y,z)$ in~\eqref{tails2a} as:
\begin{equation} \label{tails3}
p^{S_0+\cdots+S_{n-2}}p^{2(R_0+\cdots +R_{n-2})} < m_0 p^{2n}.
\end{equation}
By estimate~\eqref{estimate}, $ m\ge \wt(w)> (1+\a+\b+\g)\wt (v_{n-1})$.  We get estimates
\begin{equation}\label{xi_eta2}
0\le \a+\b+\g \le \Big[\frac{m}{\wt(v_{n-1})}\Big]-1= m_1-1,\qquad\text{where} \ 0\le \b,\g<p^{R_{n-1}}=p.
\end{equation}
So, both $\b,\g$ have at most $p$ choices.
Now the number of possibilities for variables with indices
$n-1$ in~\eqref{tails2a} is equal to the number of integers $\a,\b,\g$
satisfying~\eqref{xi_eta2}, which is bounded by $p^2 m_1$.
Combining with the bound on the number of tails~\eqref{tails3}, we get
\begin{equation}\label{f1a}
f_1(m)\le p^2m_1\cdot m_0 p^{2n}\le p^2 m p^{2n}.
\end{equation}
2) We evaluate a number $f_2(m)$ of standard monomials of second type and length $n$ such that $\wt(w)\le m$.
Using~\eqref{m22}, a number of possibilities for variables with indices
$0,\ldots,n{-}2$ in~\eqref{second_type2} is evaluated by:
\begin{equation}\label{tails8}
p^{S_0+\cdots+ S_{n-2}+2(R_0+\cdots+R_{n-2})}< m_0 p^{2n}.
\end{equation}
%for which estimate~\eqref{tails3} is valid.
Using estimate~\eqref{estimate}, we get
$ m\ge \wt(w)> (\xi+\zeta)\wt (v_{n-1})$ and
\begin{equation}\label{xizeta}
0\le \xi+\zeta \le \Big[\frac{m}{\wt(v_{n-1})}\Big]= m_1, \qquad\text{where} \ 0\le \zeta <p^{R_{n-1}}=p.
\end{equation}
Thus, the number of possibilities for the neck letters $x_{n-1},z_{n-1}$
in~\eqref{second_type2} is bounded by the number of integers $\xi,\zeta$
satisfying~\eqref{xizeta}, which is bounded by $p(m_1+1)$.
Using the bound on the number of tails~\eqref{tails8}, we get
\begin{equation}\label{f2}
f_2(m)\le p(m_1+1)\cdot m_0 p^{2n} \le 2p\cdot m p^{2n}.
\end{equation}

3) We evaluate a number  $f_3(m)$  of standard monomials of second type and length $n-1$.
Using~\eqref{m22}, a number of possibilities for all divided powers, now having indices
$0,\ldots,n-2$ in~\eqref{second_type2} is evaluated by
\begin{equation*}
f_3(m)\le p^{S_0+\cdots+ S_{n-2}+2(R_0+\cdots+R_{n-2})}< m_0 p^{2n}\le mp^{2n}.
\end{equation*}
The number of standard monomials of second type of length $n-2$ is smaller at least by factor $p^{-3}$
than estimate above. The same applies to lengths $n-3,\ldots, 0$.
Let $\tilde f_3(m)$ be the number of standard monomials of second type~\eqref{second_type2} of length  at most $n-1$.
Using~\ref{llpm}, we get
\begin{equation}\label{bound3}
\tilde f_3(m)\le \sum_{i=0}^{n-2}p^{-3i} f_3(m)\le \frac {m p^{2n}}{1-p^{-3}} \le 2m p^{2n}.
\end{equation}
Let $f_4(m)$ be the number of power standard monomials $w$ of second type of weight at most $m$.
By~\eqref{an1an} and claim i) of Lemma~\ref{Lbounds2}, $w$ is of length at most $n$.
By~\eqref{powers}, $f_4(m)\le R_0+\cdots+R_{n-1}= n$.
Combining~\eqref{f1a}, \eqref{f2}, \eqref{bound3}, and using~\eqref{llpm},
the number of all standard monomials of second type and weight at most $m$ is evaluated by
\begin{align}\label{upper}
&f_1(m)+f_2(m)+\tilde f_3(m)+f_4(m)\le (p^2+2p+2)mp^{2n}+n \\
&\quad \le (p^2+2p+2) m \exp \bigg( 2\ln p
\Big(\frac {1/\kappa{+}o(1)}{\ln p}\ln m\Big)^{\kappa}\bigg)\nonumber\\
&\quad = m\exp \bigg(\frac { 2(\ln p)^{1-\kappa}{+}o(1)}{\kappa^\kappa}(\ln m)^\kappa\bigg),\qquad m\to\infty.
\nonumber
\end{align}

It remains to obtain an upper bound on the number of standard monomials of first type,
these monomials being a basis  of the subalgebra $\LL(\Xi)$, which growth was estimated in~\cite[Theorem 9.2]{Pe17}.
That result has the upper and lower bounds with different constants $C_1$, $C_2$.
Now we are proving a stronger asymptotic with bounds $C+o(1)$, constant being the same for both sides.
A reader can trace and modify that computations or
mimic ideas of more lengthy computations for monomials of second type above using~\eqref{boundsC}, \eqref{llpm} and obtain a bound having actually
a smaller constant (because $z_i$s do not appear in tails resulting in less number of possibilities),
not changing the upper bound given by monomials of second type established above.

Finally, let  us establish the lower bound. We keep notations~\eqref{an1an}.
Similar to~\eqref{boundSupper}
\begin{equation}\label{pCp}
m\le  \wt(v_{n})\le  \prod_{i=0}^{n-1}(p^{(i+1)^{1/\kappa-1}}\!\!\!+p{-}1)
<\theta\prod_{i=0}^{n-1} p^{(i+1)^{1/\kappa-1}}, \quad \theta:=\prod_{i=0}^\infty (1{+}p^{1-(i+1)^{1/\kappa-1}}).
\end{equation}
Using~\eqref{pCp} and the upper bound~\eqref{boundsC}, we get
$m\le  \theta p^{(\kappa+o(1))  n^{1/\kappa}}$. Hence
\begin{equation}\label{ll-k}
n\ge  \bigg(\frac{\log_p (m/\theta)}{\kappa+o(1)}\bigg)^{\kappa}
\ge \Big(\frac {1{+}o(1)}{\kappa\ln p}\ln m\Big)^{\kappa}, \qquad m\to \infty.
\end{equation}
By~\eqref{boundSupper},
\begin{equation} \label{pCp2}
m_0=  \wt(v_{n-1})=  \prod_{i=0}^{n-2}(p^{S_i}+p-1)
<\theta p^{S_0+\cdots+ S_{n-2}}.
\end{equation}

Consider standard monomials of second type
$w=r_{n-2}(x,y,z)g_{n}^{\xi_{n-1},\zeta_{n-1}}$ ~\eqref{rmmp3B} of length $n$.
We evaluate the number of their tails $r_{n-2}(x,y,z)$ using~\eqref{pCp2}
\begin{equation}\label{tailsS}
p^{S_0+\cdots+S_{n-2}}p^{2(R_0+\cdots+R_{n-2})} > \frac {m_0}{\theta}p^{2(n-1)}.
\end{equation}
By our construction and Lemma~\ref{Lweight_pivo}
\begin{equation}\label{boundszeta}
m_1=\Big[\frac m {m_0}\Big ]\le \frac {\wt(v_n)}{\wt(v_{n-1})}=p^{S_{n-1}}+p-1.
\end{equation}
Consider standard monomials of second type $w$ which heads satisfy
$\xi_{n-1}\in\{0,\dots,m_1-p\}$, $\zeta_{n-1}=0$.
Using~\eqref{boundszeta}, we have $0\le \xi_{n-1}<p^{S_{n-1}}$, so, we get standard monomials indeed.
Also, using~\eqref{estimate2}, these monomials are of weight not exceeding $m$:
$$\wt(w)\le \wt(v_{n-1})(\xi_{n-1}+\zeta_{n-1}+2)\le \wt(v_{n-1}) m_1=m_0m_1\le m.$$
There are $m_1-p+1$ such heads.
We multiply this number by the number of different tails~\eqref{tailsS},
using estimate~\eqref{ll-k},  we obtain the desired lower bound:
\begin{align}\label{lower}
&(m_1{-}p{+}1) \frac { m_0 }{\theta} p^{2(n-1)}
 \ge  \frac m{2p^2\theta } p^{2n}\\
&\quad \ge  \frac m{2p^2\theta }\cdot p^{2\big (\textstyle \frac {1+o(1)} {\kappa \ln p}  \ln m\big)^\kappa }
= m\exp \bigg(\frac { 2(\ln p)^{1-\kappa}{+}o(1)}{\kappa^\kappa}(\ln m)^\kappa\bigg),\qquad m\to\infty. \qedhere
\nonumber
\end{align}
\end{proof}

\begin{proof}[Proof of Theorem~\ref{Tparam2}]
We use estimates and notations of the previous proof.
It is sufficient to prove the first claim.
Fix the constant  $\lambda:=(\ln p^2)/\kappa \in\R^+$.
Now we consider the tuple $\Xi_{q,\kappa}=(S_i,R_i\mid i\ge 0)$, where
$R_i:=1$ for all $i\ge 0$, and define integers $S_i$ by induction: $S_0=1$ and
\begin{equation}\label{defS}
S_n:=[\exp^{(q)}(\lambda (n+2) )]+1-S_0-\cdots- S_{n-1},\qquad  n\ge 1.
\end{equation}

Let us prove the desired upper bound on the standard monomials of second type.
Fix a number $m>1$. Choose $n=n(m)$ such that
\begin{equation}\label{an1an2}
\wt(v_{n-1})< m\le \wt(v_n).
\end{equation}
Put $m_0:=\wt(v_{n-1})$ and $m_1:=[m/m_0]$.
By~\eqref{boundSlower} and~\eqref{defS} we get
\begin{align*}%\label{m22A}
m&> m_0=\wt(v_{n-1})>p^{S_0+\cdots+S_{n-2}}
  \ge p^{\exp^{(q)}(\lambda n )};\\
\label{llpm}
n &< \frac 1\lambda \ln^{(q)}\log_p (m);\\
p^{2n}&< \exp\bigg(\frac {\ln p^2}\lambda \ln^{(q)}\log_p (m)  \bigg)
=\Big (\ln^{(q-1)}\log_p (m)\Big)^{(\ln p^2)/\lambda}\\
&= (\ln^{(q)} m)^{\kappa+o(1)},\qquad m\to\infty.
\end{align*}
Using estimate~\eqref{upper} on the number of all standard monomials of second type of weight at most $m$,
we get the desired upper asymptotic on the number of these monomials.
Similar bounds are valid for monomials of first type.

Let  us check the lower bound. We use notations~\eqref{an1an2}.
By estimate~\eqref{boundSupper} and~\eqref{defS}
\begin{align*}
m&\le  \wt(v_{n}) <\theta p^{S_0+\cdots+S_{n-1}}<\theta p^{\exp^{(q)}(\lambda (n+1) )+1};\\
n&> \frac 1\lambda \ln^{(q)}\Big(\log_p (m/\theta)-1\Big)-1= \frac {1+o(1)}\lambda\ln^{(q+1)}(m), \quad m\to\infty; \\
p^{2n}&>  \exp\bigg(\frac {\ln p^2+o(1)}\lambda \ln^{(q+1)}(m)  \bigg)
= (\ln^{(q)} m)^{\kappa+o(1)},\qquad\ m\to\infty.
\end{align*}
Finally, using the lower bound on the number of standard monomials of second type~\eqref{lower} and
the bound above, we obtain the desired lower bound on the growth of $\TT$.
\end{proof}

%******************************************************************************************************************

%%%%%%%%%%%%%%%%%%%%%%%%%%%%%%%%%%%%%%%%%%%%%%%%%%%%%%%%%%%%%%%%%%%%%%%%%%%%%%%%%%%%%

\end{document}